\documentclass{article}
\usepackage{amsthm}
\usepackage{amsfonts}
\usepackage {amsmath}
\usepackage{mathrsfs}
\usepackage{graphicx} 
\usepackage{color}
\usepackage{rotating}

\def\Sp{{\rm Sp}}
\def\C{{\rm C}}

\def\tr{{\rm tr}}

\newtheorem{lemma}{Lemma}[section]
\newtheorem{theorem}[lemma]{Theorem}
\newtheorem{theoremABC}[lemma]{Theorem}
\newtheorem{corollary}[lemma]{Corollary}

\newtheorem{remark}[lemma]{Remark}

\title{Square Roots of Symplectic Matrices}
\author{Qinglong Zhou$^{1,2}$\thanks{Partially supported by NSFC (No.12171426), the Natural Science Foundation of Zhejiang Province (No. Y19A010072) and the Fundamental Research Funds for the Central Universities (No. 226-2024-00136).
		E-mail: zhouqinglong@zju.edu.cn. }\\
	$^{1}$ School of Mathematical Sciences,\\Zhejiang University, Hangzhou 310058, Zhejiang, China\\
    $^{2}$ Institut f\"ur Mathematics,\\Augsburg Universit\"at, 86159 Augsburg, Germany\\
}

\begin{document}

\maketitle

\begin{abstract}
    This paper characterizes the existence of real symplectic square roots for symplectic matrices. The decomposition of Wonenburger matrices with respect to their eigenvalues permits a partition of the problem into three primary cases. A symplectic matrix whose spectrum avoids the negative real axis is shown to always admit a real symplectic square root. For a negative hyperbolic matrix, such a root exists if and only if the half-dimension is even and the matrix itself is a $\diamond$-square. In the degenerate case of eigenvalue $-1$, a necessary and sufficient condition is established via a decomposition into specific standard blocks. 
\end{abstract}

\section{Introduction}

The problem of finding a square root for a matrix is a classical question in linear algebra with deep implications. When this problem is considered within the symplectic group $\Sp(2n)$, whose elements represent linear canonical transformations in Hamiltonian mechanics, it acquires a significant geometric and dynamical meaning. A prominent application arises in the study of symmetric periodic orbits in Hamiltonian systems. Specifically, the existence of a doubly symmetric periodic orbit often translates into the algebraic condition that the associated linearized Poincaré map (a symplectic matrix) must admit a square root that itself possesses specific symmetry properties (\cite{FrM1,FrM2, Bat, AyB}).

There is a substantial literature on the square root function of a matrix, many of which are listed in the References section (\cite{CrL, Cul, Par, Lev, HaY, LaT, Hig, Sch, Sul, Cho}).
For a general real matrix, the existence of a real square root is governed by a classical Jordan-theoretic criterion: the obstruction comes from negative real eigenvalues, and the Jordan blocks associated with each such eigenvalue must occur with suitable even multiplicities. Thus, at the level of general linear algebra, the square-root problem is essentially controlled by the real Jordan form. The symplectic setting considered in this paper is more rigid. Even when the general matrix-theoretic obstruction disappears, one must still require the square root to lie in the symplectic group. This additional constraint couples the Jordan structure with the symplectic form and with the block data in a Wonenburger representative. Consequently, the problem is no longer determined solely by the Jordan form of the matrix, but also by how the corresponding spectral subspaces are paired symplectically. The purpose of this paper is to make this extra structure explicit and to formulate the resulting necessary and sufficient conditions within the real symplectic group.

Since $\Sp(2)=SL_2(\mathbb{R})$,
 the square root problem for $2\times2$ symplectic matrices has been considered (\cite{Sul}), but the picture in higher dimensions is less complete. 
This paper focuses on the square root problem within the real symplectic group $\Sp(2n)$, whose elements are central to Hamiltonian mechanics and symplectic geometry. We establish precise necessary and sufficient conditions for a symplectic matrix 
$M\in\Sp(2n)$ to have a real symplectic square root, tackling the problem by analyzing the spectral properties and Jordan structure of $M$. 

We begin by defining the key concepts used throughout the paper.
Let $M\in M_n(\mathbb{R})$. Then $X\in M_n(\mathbb{R})$ is a {\bf (real) square root} of $M$, if $X^2 = M$.
Furthermore, if $M,X\in\Sp(2n)$, we call $X$ a {\bf (real) symplectic square root} of $M$.
If there is no confusion, for a symplectic matrix $M$,
the real symplectic square root of $M$ shall be denoted briefly by ``the square root of $M$".

A particularly useful representation is provided by Wonenburger matrices. A symplectic matrix is called a {\bf Wonenburger matrix} if it can be written in the block form
$$
M=M_{A,B,C}=\begin{pmatrix}A&B\\C&A^T\end{pmatrix}\in\Sp(2n),
$$
where
\begin{equation}\label{constraints.of.Wonenburger.form}
    B=B^T,\quad C=C^T,\quad AB=BA^T,\quad A^TC=CA,\quad A^2-BC=I,
\end{equation}
equations which ensure that $M$ is symplectic.
A different choice of basis amounts to acting with an invertible matrix $R\in{\rm GL}_n(\mathbb{R})$, via
$$
R_*(A,B,C)=(RAR^{-1},RBR^T,(R^T)^{-1}CR^{-1}),
$$
i.e., $M_{A,B,C}$ is replaced by $M_{R_*(A,B,C)}$.
Thus we also denote it by
$$
R_*\begin{pmatrix}A&B\\C&A^T\end{pmatrix}
=\begin{pmatrix}RAR^{-1}&RBR^T\\(R^T)^{-1}CR^{-1}&(RAR^{-1})^T\end{pmatrix}.
$$
We denote the space of Wonenburger matrices by
$$
\Sp^{\mathcal{I}}(2n)=\{M_{A,B,C}:\;A,B,C\;{\rm satisfy}\;(\ref{constraints.of.Wonenburger.form})\}.
$$
A fascinating theorem due to Wonenburger \cite{Won} states
that every element $M\in\Sp(2n)$ can be written as the product
of two linear antisymplectic involutions, i.e., $M=\mathcal{I}_1\mathcal{I}_2$.
This factorization leads to the following fundamental conjugacy result (\cite{FrM1}):
\begin{theorem}
    (Wonenburger) Every symplectic matrix $M\in\Sp(2n)$ is symplectically conjugate to a Wonenburger matrix.
\end{theorem}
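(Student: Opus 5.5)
The plan is to derive the statement from Wonenburger's factorization $M=\mathcal{I}_1\mathcal{I}_2$, where $\mathcal{I}_1,\mathcal{I}_2$ are linear antisymplectic involutions. That is, $\mathcal{I}_k^2=I$ and $\mathcal{I}_k^TJ\mathcal{I}_k=-J$, with $J=\begin{pmatrix}0&-I\\ I&0\end{pmatrix}$. The idea is to conjugate symplectically so that $\mathcal{I}_1$ becomes the standard reflection $R_0=\mathrm{diag}(I_n,-I_n)$, and then to read off the Wonenburger block form from the condition that $\mathcal{I}_2$ is also an antisymplectic involution.

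First, I will normalize $\mathcal{I}_1$. Since $\mathcal{I}_1^2=I$, the space $\mathbb{R}^{2n}$ splits as $E_+\oplus E_-$, the $\pm1$ eigenspaces. Antisymplecticity gives $\omega(\mathcal{I}_1u,\mathcal{I}_1v)=-\omega(u,v)$. For $u,v\in E_\pm$ the left side equals $\omega(u,v)$, so $\omega(u,v)=0$. Hence $E_+$ and $E_-$ are both isotropic, and since they span $\mathbb{R}^{2n}$, both are Lagrangian of dimension $n$. I choose a basis $e_1,\dots,e_n$ of $E_+$ and take the dual basis $f_1,\dots,f_n$ of $E_-$ with $\omega(e_i,f_j)=\delta_{ij}$; this is possible because $\omega$ pairs $E_+$ and $E_-$ nondegenerately. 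The pair $(e,f)$ is a symplectic basis. If $S$ denotes the corresponding symplectic change of basis, then $S^{-1}\mathcal{I}_1S=R_0$.

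Second, I will compute the constraints on $N:=S^{-1}MS$. The matrix $\mathcal{J}:=S^{-1}\mathcal{I}_2S=R_0N$ is again an antisymplectic involution, and $N=R_0\mathcal{J}$. Write $\mathcal{J}=\begin{pmatrix}P&Q\\ U&V\end{pmatrix}$. The condition $\mathcal{J}^TJ\mathcal{J}=-J$ is equivalent to $\mathcal{J}^{-1}=-J^{-1}\mathcal{J}^TJ$. Combined with $\mathcal{J}^{-1}=\mathcal{J}$, this gives $\mathcal{J}=-J^{-1}\mathcal{J}^TJ$. A direct block computation turns this into $V=-P^T$, $Q=Q^T$, $U=U^T$. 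Then
\[
N=R_0\mathcal{J}=\begin{pmatrix}P&Q\\ -U&P^T\end{pmatrix}.
\]
With $A=P$, $B=Q$, $C=-U$, this has exactly the Wonenburger shape. The remaining relations $AB=BA^T$, $A^TC=CA$, $A^2-BC=I$ follow from $\mathcal{J}^2=I$, or equivalently from the symplecticity of $N$, expanded blockwise. This gives $N\in\Sp^{\mathcal{I}}(2n)$, which is the claim.

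The main obstacle is Wonenburger's factorization itself. The paper cites it from \cite{Won}, so I would assume it. If it had to be proved, the plan would be to decompose $M$ into generalized eigenspace blocks for the groups $\{\lambda,\lambda^{-1},\bar\lambda,\bar\lambda^{-1}\}$, which are $\omega$-orthogonal. On each block one would construct an antisymplectic involution $\mathcal{I}$ with $\mathcal{I}M\mathcal{I}=M^{-1}$, using normal forms, and then set $\mathcal{I}_2=\mathcal{I}M$. The delicate cases are the Jordan blocks with $\lambda=\pm1$ and the elliptic blocks, where the symplectic normal forms carry sign invariants. Given the cited factorization, everything else is the routine block computation above.
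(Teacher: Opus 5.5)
Your proposal is correct and follows the route the paper indicates. The paper invokes Wonenburger's factorization $M=\mathcal{I}_1\mathcal{I}_2$ together with the conjugacy consequence drawn in \cite{FrM1}, and that consequence is exactly your argument: normalize $\mathcal{I}_1$ to $\mathrm{diag}(I_n,-I_n)$ using a symplectic basis adapted to its two Lagrangian eigenspaces, note that $\mathcal{J}=\mathcal{J}^{-1}=-J^{-1}\mathcal{J}^TJ$ forces the block form $\begin{pmatrix}A&B\\C&A^T\end{pmatrix}$ with $B=B^T$ and $C=C^T$, and obtain the remaining relations from $\mathcal{J}^2=I$, all while taking the factorization itself as a cited input, as the paper does.
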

By Theorem 1.1, it suffices to consider the real symplectic square root of a Wonenburger matrix.

To describe the decomposition of symplectic matrices, we use the 
{\bf $\diamond$-sum} introduced in \cite{Lon},
For two $2m_k\times 2m_k$ matrices of square block form
$M_k=\left(\begin{matrix}A_k&B_k\cr
                   C_k&D_k\end{matrix}\right)$ with $k=1, 2$,
the ``$\diamond$"-sum of $M_1$ and $M_2$ is defined (c.f. \cite{Lon} called product there) by
the following $2(m_1+m_2)\times 2(m_1+m_2)$ matrix $M_1\diamond M_2$:
$$
M_1\diamond M_2=\left(\begin{matrix}A_1 &   0 & B_1 &   0\cr
                             0   & A_2 &   0 & B_2\cr
                             C_1 &   0 & D_1 &   0\cr
                             0   & C_2 &   0 & D_2\end{matrix}\right),
$$
and $M^{\diamond k}$ denotes the $k$-fold  $\diamond$-sum of $M$.

Our main results are summarized in the following three theorems, which collectively address the total spectral cases. 
First,
if the symplectic matrix has no eigenvalues on the negative real axis $\mathbb{R}^-$, then it always admits a real symplectic square root.
 Indeed, we have

\renewcommand{\theoremABC}{\noindent{\bf Theorem A.}}
\begin{theoremABC}
\label{non-negative.hyperbolic.case}
{\it
    Let $M=\begin{pmatrix}A&B\\C&A^T\end{pmatrix}$ be a Wonenburger matrix with no eigenvalues on the negative real axis
    $\mathbb{R}^-$.
    In the Wonenburger representation, the condition $\sigma(M)\cap\mathbb{R}^-=\emptyset$ 
    is equivalent to $\sigma(A)\cap(-\infty,-1]=\emptyset$.
    Then $M$ has at least one real symplectic square root.
}
\end{theoremABC}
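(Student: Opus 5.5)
Two things need proof: the spectral equivalence $\sigma(M)\cap\mathbb{R}^-=\emptyset \iff \sigma(A)\cap(-\infty,-1]=\emptyset$, and the existence of a real symplectic square root. The plan is to take the principal square root given by holomorphic functional calculus and check that it is real and symplectic.

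\emph{The equivalence.} Let $S=\begin{pmatrix}0&I\\ I&0\end{pmatrix}$ (the antisymplectic involution behind the Wonenburger form). The constraints (\ref{constraints.of.Wonenburger.form}) give $M^{-1}=\begin{pmatrix}A&-B\\-C&A^T\end{pmatrix}$, so
$$\tfrac12(M+M^{-1})=\begin{pmatrix}A&0\\0&A^T\end{pmatrix}.$$
Hence $\sigma(A)=\{\tfrac12(\lambda+\lambda^{-1}):\lambda\in\sigma(M)\}$, because the spectrum of a polynomial or rational function of $M$ is the image of $\sigma(M)$. The map $\lambda\mapsto\tfrac12(\lambda+\lambda^{-1})$ sends $\mathbb{C}\setminus\{0\}$ onto $\mathbb{C}$. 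The preimage of $(-\infty,-1]$ is exactly $\mathbb{R}^-$, since $\tfrac12(\lambda+\lambda^{-1})=t$ with $t\le-1$ forces $\lambda$ to be real and negative. This proves the equivalence. In particular $-1\notin\sigma(M)$, so no Jordan block at $-1$ needs special treatment.

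\emph{Construction of the root.} Take the principal branch $f(z)=z^{1/2}$, which is holomorphic on $\mathbb{C}\setminus(-\infty,0]$, and set $X=f(M)$ by functional calculus. A Cauchy integral over a contour enclosing $\sigma(M)$ inside $\mathbb{C}\setminus(-\infty,0]$ works, as does Hermite interpolation by a polynomial $p$. Then $X^2=M$, and $X$ is a polynomial in $M$.

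We check three properties.
\begin{enumerate}
\item \emph{Reality.} $\sigma(M)$ is closed under conjugation and $f(\bar z)=\overline{f(z)}$, so the contour can be chosen conjugation-symmetric. Equivalently, the interpolating polynomial $p$ can be taken with real coefficients, since its interpolation data are conjugation-symmetric. Hence $\overline{X}=X$.
\item \emph{Symplecticity.} $M\in\Sp(2n)$ means $M^TJM=J$, i.e.\ $M^T=JM^{-1}J^{-1}$. Since $X$ is a polynomial in $M$,
$$X^T=p(M^T)=J\,p(M^{-1})\,J^{-1}.$$
Also $\sigma(M^{-1})=\sigma(M)^{-1}$ avoids $(-\infty,0]$, and $f(z^{-1})=f(z)^{-1}$ on $\mathbb{C}\setminus(-\infty,0]$. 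Functional calculus is compatible with composition with $z\mapsto z^{-1}$, so $p(M^{-1})=f(M^{-1})=f(M)^{-1}=X^{-1}$. Therefore $X^TJX=J$, i.e.\ $X\in\Sp(2n)$.
\item \emph{Wonenburger form (optional).} $SMS=M^{-1}$ for a Wonenburger matrix, so the same argument gives $SXS=X^{-1}$, and $X$ is again of Wonenburger type. Only reality and symplecticity are needed for the theorem.
\end{enumerate}

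The main obstacle is the identity $f(M^{-1})=f(M)^{-1}$. The polynomial $p$ interpolates $f$ only on $\sigma(M)$, not on $\sigma(M^{-1})$, so $p(M^{-1})=f(M^{-1})$ does not follow directly. I would resolve this by taking $p$ to interpolate $f$ to the needed orders on $\sigma(M)\cup\sigma(M^{-1})$; these sets coincide here, because the spectrum of a symplectic matrix is closed under $\lambda\mapsto\lambda^{-1}$. Alternatively, one can use the Cauchy integral and the substitution $w=z^{-1}$ directly.

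A purely algebraic route would work block by block on the Wonenburger normal forms. The functional-calculus route avoids that casework.
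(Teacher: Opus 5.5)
Your proof is correct, and it takes a different route from the paper's.

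\textbf{How the paper argues.} The paper never takes a square root of the $2n\times 2n$ matrix $M$. It takes the principal square root $X_{11}$ of the $n\times n$ matrix $\frac12(I_n+A)$, which is a real polynomial in $A$. It then sets $X_{12}=\frac12X_{11}^{-1}B$ and $X_{21}=\frac12CX_{11}^{-1}$, and checks the relations (\ref{constraints.of.Wonenburger.form}) for these blocks by hand, using that $X_{11}$ commutes with $A$. Finally it reads off $X^2=M$ from $X^2=2\begin{pmatrix}X_{11}^2&X_{11}X_{12}\\X_{21}X_{11}&(X_{11}^T)^2\end{pmatrix}-I_{2n}$. Its spectral equivalence comes from $p_M(t)=\det(t^2I_n-2tA+I_n)$, not from your identity $\frac12(M+M^{-1})=\mathrm{diag}(A,A^T)$.

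\textbf{How you argue.} You take $X=M^{1/2}$ directly. You get reality and symplecticity from general properties of primary matrix functions: $f(\bar z)=\overline{f(z)}$, $f(M^T)=f(M)^T$, similarity invariance, and $f(z^{-1})=f(z)^{-1}$ via composition.

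\textbf{What each approach buys.} Your argument is coordinate-free. It needs no Wonenburger normal form, applies verbatim to any $M\in\Sp(2n)$ with $\sigma(M)\cap(-\infty,0]=\emptyset$, and produces a root that is a polynomial in $M$. The paper's version gives an explicit block formula in the Wonenburger data. It also sets up the ansatz $X_{11}^2=\frac12(I_n+A)$, which the paper reuses with a non-principal $X_{11}$ in the negative hyperbolic case.

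In fact the two roots coincide. For a Wonenburger $X$ one has $\frac12(X+X^{-1})=\mathrm{diag}(X_{11},X_{11}^T)$. If every $x\in\sigma(X)$ has $\operatorname{Re}x>0$, then every $\frac12(x+x^{-1})$ has positive real part. So $X_{11}$ is forced to be the principal square root of $\frac12(I_n+A)$, and $X_{12}$, $X_{21}$ are then determined.

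\textbf{Two small remarks.} The obstacle you flagged resolves automatically. Since $M^{-1}=J^{-1}M^TJ$ is similar to $M$, the two have the same minimal polynomial. Hence any $p$ that interpolates $f$ on the spectrum of $M$ also does so for $M^{-1}$.

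There is one slip in your optional item 3. With your $S=\begin{pmatrix}0&I\\I&0\end{pmatrix}$ one gets $SMS=M^T$, not $M^{-1}$. The antisymplectic involution that conjugates $M$ to $M^{-1}$ is $\mathrm{diag}(I_n,-I_n)$. Either identity, together with $X$ being a symplectic polynomial in $M$, still shows that $X$ has Wonenburger form. The theorem does not need this item anyway.
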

\medskip

This result demonstrates that for matrices whose spectrum avoids the negative real axis, the square root problem admits a constructive solution. 

If the symplectic matrix is negative hyperbolic, meaning its eigenvalues are negative real numbers (other than $-1$), we have

\renewcommand{\theoremABC}{\noindent{\bf Theorem B.}}
\begin{theoremABC}
\label{negative.hyperbolic.case}
{\it
    Let $M=\begin{pmatrix}A&B\\C&A^T\end{pmatrix}\in\Sp(2n)$ be a Wonenburger matrix whose spectrum is contained in $\{\lambda,{\lambda}^{-1}\}$,
    and suppose that both \(\lambda\) and \(\lambda^{-1}\) have
    multiplicity \(n\).
    Then $M$ has at least one real symplectic square root if and only if
    the half-dimension $n$ is even, and 
    $M\sim M_1^{\diamond2}$ for some lower-dimensional Wonenburger  matrix $M_1$.
}
\end{theoremABC}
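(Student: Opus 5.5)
We will prove both directions by reducing everything to the Jordan structure of the block $A$, together with the symplectic pairing of the two eigenspaces. Here $\lambda<0$, $\lambda\neq -1$, and we fix $|\lambda|>1$. The spectral subspaces $E_\lambda$ and $E_{\lambda^{-1}}$ of $M$ are Lagrangian, each of dimension $n$, and the symplectic form pairs them nondegenerately. Choosing a basis of $E_\lambda$ and the dual basis of $E_{\lambda^{-1}}$ shows that $M$ is symplectically conjugate to $\mathrm{diag}(N,(N^T)^{-1})$, where $N=M|_{E_\lambda}$. This is the standard normal form $L\mapsto \mathrm{diag}(L,L^{-T})$ of $\mathrm{GL}_n(\mathbb{R})$ inside $\Sp(2n)$. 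Moreover, two such matrices are symplectically conjugate if and only if the corresponding $N$'s are conjugate in $\mathrm{GL}_n(\mathbb{R})$. Indeed, a symplectic conjugacy preserves the spectral subspaces, so it has the form $\mathrm{diag}(R,R^{-T})$. In the Wonenburger picture, $N$ is determined up to similarity by the Jordan data of $A$ at the eigenvalue $\mu=(\lambda+\lambda^{-1})/2<-1$. The condition $M\sim M_1^{\diamond 2}$ then becomes: $N\sim N_1\oplus N_1$, that is, every Jordan block size of $N$ occurs with even multiplicity.

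\emph{Necessity.} Let $X\in\Sp(2n)$ with $X^2=M$. Since $X$ commutes with $M$, it preserves $E_\lambda$ and $E_{\lambda^{-1}}$, so $X=\mathrm{diag}(Y,Y^{-T})$ in the normal form, with $Y\in \mathrm{GL}_n(\mathbb{R})$ and $Y^2=N$. We now apply the classical real square root criterion recalled in the introduction. Because $N$ has only the negative eigenvalue $\lambda$, $Y$ has eigenvalues $\pm i\sqrt{|\lambda|}$, which occur in conjugate pairs. The real Jordan blocks of $Y$ are therefore $2k\times 2k$ blocks, one for each complex pair of Jordan blocks of size $k$. Squaring such a block yields two Jordan blocks of size $k$ for $\lambda$, because the map $z\mapsto z^2$ has nonzero derivative at $\pm i\sqrt{|\lambda|}$ and so preserves Jordan block sizes. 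Hence every Jordan block size of $N$ appears an even number of times. This gives $n$ even and $N\sim N_1\oplus N_1$, hence $M\sim M_1^{\diamond 2}$ with $M_1=\mathrm{diag}(N_1,N_1^{-T})$, put in Wonenburger form by Theorem 1.1.

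\emph{Sufficiency.} Suppose $M\sim M_1^{\diamond2}$. Then $N\sim N_1\oplus N_1$, and we may take $N_1=\lambda(I+K)$ with $K$ nilpotent. Set
$$Y=\sqrt{|\lambda|}\,(I+K)^{1/2}\otimes\begin{pmatrix}0&-1\\1&0\end{pmatrix},$$
where $(I+K)^{1/2}$ is given by the binomial series. Then $Y$ is real and
$$Y^2=|\lambda|(I+K)\otimes(-I_2)=N_1\oplus N_1$$
up to a permutation of coordinates. Consequently $X=\mathrm{diag}(Y,Y^{-T})$ is a real symplectic square root of the normal form. Conjugating back through the symplectic conjugacies gives a symplectic square root of $M$, since symplectic conjugation preserves both squares and membership in $\Sp(2n)$.

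The main obstacle is the first step: rigorously justifying the normal form and the statement that the $\diamond$-square condition is exactly the even-multiplicity condition on the Jordan blocks of $N$. This requires tracking the Wonenburger data. The eigenvectors of $M$ can be written in terms of $A$, $B$, $C$; via $A^2-BC=I$, $E_\lambda$ is the graph-like image of $\ker(A-\mu)^n$-type subspaces. The point to check is that the Jordan type of $N$ coincides with that of $A$ at $\mu$, since $\mu=(t+t^{-1})/2$ is a local diffeomorphism near $t=\lambda\neq-1$. It must also be checked that "$\sim$" in the theorem means symplectic conjugacy, so that the classification of $N$ up to $\mathrm{GL}_n$-similarity suffices.
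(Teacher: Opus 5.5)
Your proof is correct, and it takes a genuinely different route from the paper's.

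\textbf{The paper's route.} The paper works entirely inside Wonenburger coordinates. For necessity, it takes a Wonenburger representative $X=\begin{pmatrix}X_{11}&X_{12}\\X_{21}&X_{11}^T\end{pmatrix}$ of the root and uses $A=2X_{11}^2-I$. It puts $X_{11}$, which has purely imaginary eigenvalues, into real Jordan form and checks that each squared real block splits into two equal Jordan blocks, concluding $A\sim\mathrm{diag}(A_1,A_1)$. For sufficiency, it sets $X_{11}=\begin{pmatrix}0&-Y\\Y&0\end{pmatrix}$ with $Y^2=-\frac12(I+A_1)$ and completes $X$ ``as in Theorem A''.

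\textbf{Your route.} You use the Lagrangian splitting $E_\lambda\oplus E_{\lambda^{-1}}$ to reduce to $N=M|_{E_\lambda}$ and $Y=X|_{E_\lambda}$. The core mechanism is the same: squaring a real matrix whose eigenvalues are $\pm ic$ with $c\neq0$ produces Jordan blocks in equal pairs. But you apply it to $Y^2=N$ rather than to $X_{11}^2=\frac12(I+A)$.

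\textbf{What your route buys.} It gives rigor exactly where the paper is thin.
\begin{enumerate}
\item Your normal form shows that the symplectic conjugacy class of $M$ is determined by the $\mathrm{GL}_n(\mathbb{R})$-class of $N$. This is what converts the Jordan-block condition into the $\diamond$-doubling condition. The paper only establishes a statement about $A$ and never justifies this identification.
\item Your root $\mathrm{diag}(Y,Y^{-T})$ is symplectic for free. The paper's completion step does not go through as written, because its $X_{11}$ is not a polynomial in $A$. Already for $M=M_1\diamond M_1$, so that $B=\mathrm{diag}(B_1,B_1)$, one finds $X_{11}B=-BX_{11}^T$. Hence $X_{12}=\frac12X_{11}^{-1}B$ is skew-symmetric, and the upper-right block of $X^2$ is $0$ instead of $B$.
\end{enumerate}
In exchange, the paper's approach stays in the Wonenburger framework it needs anyway for the eigenvalue $-1$, where no Lagrangian splitting is available.

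\textbf{The ``main obstacle'' you flag.} It is not actually an obstacle, because nothing in your argument uses the Wonenburger data of $M$ or $M_1$.
\begin{itemize}
\item Both hypothesis and conclusion are invariant under symplectic conjugacy.
\item $M_1$ automatically has spectrum $\{\lambda,\lambda^{-1}\}$ with multiplicities $n/2$, hence a normal form $\mathrm{diag}(N_1,N_1^{-T})$.
\item Directly, $\mathrm{diag}(N_1,N_1^{-T})^{\diamond2}=\mathrm{diag}(N_1\oplus N_1,(N_1\oplus N_1)^{-T})$.
\item The Wonenburger form of $M_1$ is supplied at the end by Theorem 1.1, as you say.
\end{itemize}
If you do want the link to $A$, it takes one line. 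For a Wonenburger matrix, $M^{-1}=\begin{pmatrix}A&-B\\-C&A^T\end{pmatrix}$, so $M+M^{-1}=\mathrm{diag}(2A,2A^T)$. Since $t\mapsto\frac12(t+t^{-1})$ has nonzero derivative at $\lambda$, the Jordan type of $A$ at $\frac12(\lambda+\lambda^{-1})$ equals that of $N$.
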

\medskip
Here and hereafter, $\sim$ denotes symplectic similarity, i.e., similarity by a symplectic matrix.

Theorem B states that, in the negative hyperbolic case, the square-root problem is reduced to a parity condition together with a $\diamond$-doubling condition. 
The dimension constraint ($n$ even) reflects the fact that negative hyperbolic matrices introduce additional topological constraints that can only be resolved when the dimension has the appropriate parity.

The most intricate case involves the eigenvalue $-1$, where nilpotent structures in the Jordan normal form introduce significant complexity.

\renewcommand{\theoremABC}{\noindent{\bf Theorem C.}}
\begin{theoremABC}\label{-1.case}
{\it
    Let $M=\begin{pmatrix}A&B\\C&A^T\end{pmatrix}\in\Sp(2n)$ be a Wonenburger matrix whose only eigenvalue is $-1$.
    Then $M$ has a real symplectic square root
    if and only if there exists $P\in\Sp(2n)$ such that
\begin{equation}\label{existence.condition.of.eig.-1}
    PMP^{-1}=\left[\begin{pmatrix}J_{m_1}(-1)&s_1{K}_{m_1}\\{D}_{1}&J_{m_1}(-1)^T\end{pmatrix}\diamond\ldots\diamond\begin{pmatrix}J_{m_p}(-1)&s_p{K}_{m_p}\\{D}_{p}&J_{m_p}(-1)^T\end{pmatrix}\right]^{\diamond2}\diamond
    \left[\begin{pmatrix}A_1&B_1\\C_1&A_1^T\end{pmatrix}\diamond\ldots\diamond \begin{pmatrix}A_q&B_q\\C_q&A_q^T\end{pmatrix}\right],
\end{equation}
where $D_i, 1\le i\le p$ and $C_j,1\le j\le q$
are symmetric matrices satisfying the corresponding Wonenburger relations, and
\begin{equation}\label{Aj.Bj}
A_j=\begin{pmatrix}J_{l_j+1}(-1)&O\\O&J_{l_j}(-1)\end{pmatrix},\qquad
B_j=\begin{pmatrix}
                        & & t_jK_{l_j}\\
                        & 0& &\\
                        t_jK_{l_j}& & &
                    \end{pmatrix}
\end{equation}
for some $m_i\in\mathbb{N},s_i\in\{\pm1\},\;1\le i\le p$
and $l_j\in\mathbb{N}\cup\{0\},t_j\in\{\pm1\},\;1\le j\le q$.
Here $J_{\cdot}(\lambda)$ is the Jordan block with eigenvalue $\lambda$, and $K_{l_j}$ is the $l_j\times l_j$ matrix with anti-diagonal entries all equal to $1$ and the other entries all equal to $0$, i.e.,
\begin{equation}\label{K}
    K_{l_j}=\left(\begin{matrix}
        0 & 0 & \cdots & 0& 1\\
        0 & 0 &\cdots & 1 & 0\\
        \vdots& \vdots& \begin{sideways}$\ddots$\end{sideways}& \vdots& \vdots\\
        0& 1 & \cdots& 0& 0\\
        1& 0& \cdots& 0& 0
    \end{matrix}\right).
    \end{equation}
    }
\end{theoremABC}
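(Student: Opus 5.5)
Suppose $X\in\Sp(2n)$ satisfies $X^2=M$. Then every eigenvalue of $X$ squares to $-1$, so $\sigma(X)\subset\{\pm i\}$. The proof will therefore go through the normal forms of symplectic matrices with spectrum $\{i,-i\}$, and I will compute their squares. Conversely, for sufficiency it is enough to show that each block appearing in (\ref{existence.condition.of.eig.-1}) has a symplectic square root. A $\diamond$-sum of symplectic square roots is a symplectic square root of the $\diamond$-sum, because $(X_1\diamond X_2)^2=X_1^2\diamond X_2^2$. Conjugating by $P^{-1}$ then gives a root of $M$ itself.

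For sufficiency I will treat the two kinds of block separately. For a doubled block $N^{\diamond 2}$, with $N=\begin{pmatrix}J_m(-1)&sK_m\\ D&J_m(-1)^T\end{pmatrix}$, I will use the standard ``rotation'' trick. Write $N^{\diamond2}\sim N\oplus N$ acting on $\mathbb{R}^{2m}\otimes\mathbb{R}^2$. Take
$$X=\begin{pmatrix}0&-I\\ N&0\end{pmatrix}$$
in the block decomposition given by the two copies, suitably arranged so that it is symplectic for the $\diamond$-structure. Then $X^2=\mathrm{diag}(-N,-N)$, and this is not what we want. So instead I will use $X=\begin{pmatrix}0&-S\\ S&0\end{pmatrix}$ with $S$ a symplectic square root of $-N$. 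Here $-N$ has the single eigenvalue $1$, so $S$ exists by Theorem A. This $X$ is symplectic, since it equals $(J_2\otimes I)(I\otimes S)$ and both factors are symplectic. Its square is $I\otimes(-S^2)=N\oplus N$, as required. For the single blocks $M_{A_j,B_j,C_j}$, with $A_j=J_{l+1}(-1)\oplus J_l(-1)$, I will construct a root explicitly. A real matrix with eigenvalues $\pm i$ and a single complex Jordan block of size $l+1$ (paired with its conjugate) has a square whose real Jordan form is $J_{l+1}(-1)\oplus J_{l}(-1)$, twice over in the symplectic doubling. I will write down the canonical Williamson/Long normal form of the symplectic matrix with eigenvalue pair $\pm i$ of this size. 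I will then compute its square and verify that it is symplectically conjugate to the stated Wonenburger block with sign $t_j$ matching the Krein-type sign of the root.

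For necessity, I will decompose $\mathbb{R}^{2n}$ symplectically according to the normal form of $X$. Symplectic matrices with $\sigma\subset\{\pm i\}$ split as a $\diamond$-sum of indecomposable blocks, one for each complex Jordan block size $k$ of eigenvalue $i$ together with a sign invariant. Each such block is conjugate in $\mathrm{GL}$ to a real matrix $R_k$ with $R_k^2$ having real Jordan blocks $J_{\lceil k/2\rceil}(-1)$ and $J_{\lfloor k/2\rfloor}(-1)$, each with multiplicity two over $\mathbb{C}$-pairs. More precisely, $(i+N)^2=-1+2iN+N^2$, and the real form then yields sizes $\lceil k/2\rceil,\lfloor k/2\rfloor$ in each of the real and imaginary parts. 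Squaring each indecomposable block of $X$ therefore gives one of two outcomes. If $k$ is even, $k=2m$, the square is the doubled block of size $m$. If $k$ is odd, $k=2l+1$, the square is the block $M_{A_j,B_j,C_j}$. Since $M$ is the $\diamond$-sum of these squares, regrouping yields (\ref{existence.condition.of.eig.-1}).

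The main obstacle is identifying these squares up to \emph{symplectic} rather than merely linear similarity, and this is where the signs $s_i,t_j$ come in. Eigenvalue $-1$ blocks carry a symplectic sign invariant: the symmetric form $\omega(\cdot,(M+I)^{m-1}\cdot)$ on the top of the Jordan chain. I would compute this invariant for the square directly from the sign invariant of the $\pm i$ block of $X$. I would then show that every sign combination is realized, which is why $s_i,t_j$ are free in the statement. In the even case I also need to know that the two copies can be put in the form $N^{\diamond2}$ with the \emph{same} sign. I expect this to follow because the sign form of $X$ is Hermitian and its real and imaginary restrictions contribute equal signatures. This sign bookkeeping is the step I would check most carefully, starting with small cases $k=1,2,3$.
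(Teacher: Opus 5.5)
There is a genuine gap: the necessity argument rests on a false Jordan-form computation, and the step that actually decides the theorem is only announced.

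\textbf{The Jordan-form error.} The outcomes you state for the squares (even $k$ gives a doubled block, odd $k$ gives the $A_j,B_j$ block) agree with Lemma~\ref{Lm:square.of.eig.i}. Your justification for them is wrong, though. Over $\mathbb{C}$ one has $(iI+N)^2=-I+N(2iI+N)$. Since $2iI+N$ is invertible and commutes with $N$, the nilpotent part $N(2iI+N)$ has the same kernel flag as $N$, so $J_k(i)^2\sim J_k(-1)$ is a \emph{single} block.

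\begin{itemize}
\item Hence the square of a $2k$-dimensional indecomposable block of $X$ has Jordan type $J_k(-1)\oplus J_k(-1)$. It is not $J_{\lceil k/2\rceil}(-1)^{\oplus2}\oplus J_{\lfloor k/2\rfloor}(-1)^{\oplus2}$.
\item The half sizes are the Jordan sizes of the upper-left Wonenburger block $2X_{11}^2-I$ of $X^2$ when $X_{11}=J_k(0)$, because squaring a nilpotent Jordan block splits it. That is what the paper exploits; they are not Jordan sizes of $X^2$.
\item The same confusion appears in your sufficiency step for odd blocks. A root of the $2(2l+1)$-dimensional block must have Jordan blocks of size $2l+1$ at $\pm i$, not $l+1$; a size-$(l+1)$ block at $i$ with its conjugate has dimension only $2l+2$.
\end{itemize}

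This matters for the sign step you single out. For $k=2m$ the true type $J_{2m}(-1)^{\oplus2}$ makes $\omega(\cdot,(M+I)^{2m-1}\cdot)$ a symmetric form on a $2$-dimensional top quotient. On your claimed $J_m(-1)^{\oplus4}$ with $m$ odd, the analogous form is skew, and no sign $s_i$ could arise at all.

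\textbf{The missing identification.} The step carrying necessity is identifying each square up to \emph{symplectic} similarity. In the even case the target is $W_m(s)^{\diamond2}$, where $W_m(s)=\begin{pmatrix}J_m(-1)&sK_m\\ D&J_m(-1)^T\end{pmatrix}$, with the \emph{same} $s$ in both copies. In the odd case the target is the $A_j,B_j$ block. You only say ``I would compute'' and ``I expect'' here.

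The paper does this by explicit conjugations: $Q_1,Q_2,Q_3$, then Lemma~\ref{Lm:simple.form.of.one.Jordan.block}, then conjugation by $\begin{pmatrix}0&-K_m\\K_m&0\end{pmatrix}$ to match the signs. Your invariant route can be closed, but you must supply the argument:
\begin{itemize}
\item $X$ is symplectic and commutes with $M$, so it induces an isometry $\bar X$ of $Q(v,w)=\omega(v,(M+I)^{2m-1}w)$ on the $2$-dimensional top quotient, with $\bar X^2=-1$.
\item Then $Q(v,\bar Xv)=0$ and $Q(\bar Xv,\bar Xv)=Q(v,v)$. By nondegeneracy $Q$ is definite, so the two signs agree.
\item For odd $k$, odd-size blocks at $-1$ carry no sign, so the class is fixed by the type $J_{2l+1}(-1)^{\oplus2}$.
\end{itemize}
This still requires citing the normal-form theory at $-1$ (Jordan type plus signs on even blocks). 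It also requires checking that $W_m(\pm1)$ represent the two sign classes of $J_{2m}(-1)$.

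\textbf{What works, and a caveat.} Your sufficiency argument for the doubled blocks is correct. Taking $X=J_2\otimes S$ with $S^2=-W_m(s)$, supplied by Theorem~A applied to $-W_m(s)$ (spectrum $\{1\}$), works. It is cleaner than the paper's ``reverse the proof of Lemma~\ref{Lm:square.of.eig.i}'': it is the Theorem~B trick applied to $M$ rather than to $X_{11}$, and needs no knowledge of $D_i$.

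For the odd blocks, note that $B_j$ is singular, so $C_j$ is not determined by $A_j,B_j$. For $l_j=0$ the block is $\begin{pmatrix}-1&0\\ c&-1\end{pmatrix}$, which by Theorem~\ref{thm:sqrt.dim2} has no real symplectic square root when $c\neq0$. So ``verify it is conjugate to the stated block'' can only succeed for the $C_j$ that actually arise as squares. The statement and the paper's proof gloss over this point as well.
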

\medskip

Theorem C provides a complete characterization for the degenerate case where $-1$ is the only eigenvalue. The result shows that a real symplectic square root exists if and only if the matrix is conjugate to a specific standard form built from identifiable blocks via the $\diamond$-sum. The presence of nilpotent structures in the Jordan blocks introduces significant complexity, which is resolved by the detailed normal form described in the theorem.

Combining the above three cases, we obtain the following complete criterion.

\renewcommand{\theoremABC}{\noindent{\bf Theorem D.}}
\begin{theoremABC}\label{main.theorem}
{\it For any $M\in\Sp(2n)$, $M$ has a real symplectic square root if and only if, after a symplectic conjugacy and the corresponding spectral $\diamond$-decomposition, it can be written in the form
\[
M \sim M_0
\diamond
\left(\mathop{\diamond}_{\lambda\in\Lambda} M_\lambda\right)
\diamond M_{-1},
\]
where the following conditions hold:

(i) The component $M_0$ has no eigenvalues on the negative real axis. 
This component imposes no obstruction.

(ii) The set $\Lambda$ consists of one representative $\lambda<-1$ from each reciprocal pair
$\{\lambda,\lambda^{-1}\}\subset \mathbb R^-\setminus\{-1\}$.
For each $\lambda\in\Lambda$, the component $M_\lambda$ has spectrum contained in
$\{\lambda,\lambda^{-1}\}$.
If $M_\lambda\in \Sp(2n_\lambda)$, then $n_\lambda$ is even and
\[
M_\lambda\sim N_\lambda^{\diamond 2}
\]
for some lower-dimensional Wonenburger matrix $N_\lambda$.

(iii) The component $M_{-1}$, whose spectrum is contained in $\{-1\}$, is symplectically conjugate to a matrix of the standard form described in Theorem C.
}
\end{theoremABC}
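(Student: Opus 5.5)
The plan is to reduce Theorem D to Theorems A, B and C through a spectral splitting argument. The splitting has to be compatible both with the symplectic form and with the operation of taking square roots.

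\textbf{Step 1: spectral decomposition of $M$.} For $M\in\Sp(2n)$, decompose $\mathbb{R}^{2n}$ into the real generalized eigenspaces
\[
E_0=\bigoplus_{\mu\notin\mathbb{R}^-}\ker(M-\mu)^{2n},\qquad E_\lambda=\ker(M-\lambda)^{2n}\oplus\ker(M-\lambda^{-1})^{2n}\ \ (\lambda\in\Lambda),\qquad E_{-1}=\ker(M+1)^{2n},
\]
with conjugate pairs grouped together so that each space is real. The standard fact $\omega(\ker(M-\mu)^{k},\ker(M-\nu)^{k})=0$ whenever $\mu\nu\neq1$ shows the following:
\begin{itemize}
\item each of these subspaces is symplectic;
\item they are mutually $\omega$-orthogonal;
\item each is $M$-invariant.
\end{itemize}
Choosing symplectic bases of the pieces and applying Theorem 1.1 to each restriction gives $M\sim M_0\diamond(\diamond_{\lambda}M_\lambda)\diamond M_{-1}$ with Wonenburger blocks.

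\textbf{Step 2: the key lemma.} We need that $M$ has a symplectic square root if and only if each block does. The direction ``if'' is immediate, since $(X_1\diamond X_2)^2=X_1^2\diamond X_2^2$ and a $\diamond$-sum of symplectic matrices is symplectic.

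For ``only if'', suppose $X\in\Sp(2n)$ and $X^2=M$. Then $X$ commutes with $M$, so $X$ preserves every generalized eigenspace of $M$, and hence each $E_\bullet$. The restriction of $X$ to $E_\bullet$ is symplectic with respect to the restricted form and squares to the restriction of $M$. Conjugating back by the same symplectic change of basis used in Step 1 gives a symplectic square root of each block $M_0$, $M_\lambda$, $M_{-1}$.

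This lemma is the main point needing care. The invariance follows from $XM=MX$ via polynomials in $M$: the projector onto $E_\bullet$ is a polynomial in $M$. The subtle point is that the change of basis can be chosen block-diagonally, so that the conjugated square root really is the $\diamond$-sum of square roots of the blocks. This holds because symplectic similarity preserves the existence of a symplectic square root: if $X^2=M$, then $(PXP^{-1})^2=PMP^{-1}$.

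\textbf{Step 3: applying the earlier theorems blockwise.}
\begin{itemize}
\item Block $M_0$: Theorem A always supplies a square root, so condition (i) imposes nothing.
\item Blocks $M_\lambda$, $\lambda\in\Lambda$: the spectrum lies in $\{\lambda,\lambda^{-1}\}$, and symplecticity forces $\lambda$ and $\lambda^{-1}$ to have equal multiplicity $n_\lambda$. Theorem B then gives exactly condition (ii): $n_\lambda$ is even and $M_\lambda\sim N_\lambda^{\diamond2}$.
\item Block $M_{-1}$: Theorem C gives condition (iii).
\end{itemize}
Combining these with the equivalence of Step 2 proves both directions of Theorem D.

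The only remaining point is that the decomposition is unique up to symplectic conjugacy of each block, since the subspaces $E_\bullet$ are canonically determined by $M$. Therefore the conditions are well defined, independent of the choices made in Step 1.
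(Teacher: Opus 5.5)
Your proposal is correct as a reduction and is essentially the paper's own argument. You split $M$ into spectrally disjoint, symplectically orthogonal pieces; use that a symplectic square root commutes with $M$ and therefore splits along those pieces (the paper's lemma on square roots under spectral $\diamond$-decompositions, which it proves with the Sylvester equations $M_1X_{12}=X_{12}M_2$ rather than with spectral projectors); and then invoke Theorems A, B and C blockwise.

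One caution, which applies equally to the paper: the sufficiency half of (iii) is only as strong as Theorem C. Its standard form does not determine $C_j$, because $B_j$ is always singular. For instance, $\begin{pmatrix}-1&0\\1&-1\end{pmatrix}$ fits that form with $p=0$, $q=1$, $l_1=0$, yet it has no real symplectic square root. So condition (iii) must be read as requiring the $C_j$ that actually arise from squaring the odd blocks.
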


Here components which do not occur in the spectrum of $M$ are simply omitted. In particular, the only obstructions to the existence of a real symplectic square root come from the negative real spectrum: each negative hyperbolic component must satisfy the parity and $\diamond$-doubling condition in Theorem B, while the $-1$-component must satisfy the normal-form condition in Theorem C.

The paper is organized as follows. In Section \ref{sec:2} we first study the square-root problem in low dimensions. The cases $\Sp(4)$ and $\Sp(6)$ are treated in detail by using the Cayley--Hamilton theorem and the coefficients of the characteristic polynomial; the resulting criteria also serve as model computations for the general theory. 
In Section \ref{sec:3} we establish the decomposition results for Wonenburger matrices. These results allow us to reduce the square-root problem to symplectic spectral components and prepare the analysis of the degenerate $-1$-component. Finally, in Section \ref{sec:4} we prove the main theorems stated above: the unobstructed case with no negative real spectrum, the negative hyperbolic case, and the degenerate case in which the spectrum is concentrated at $-1$. Combining these cases gives the complete criterion for the existence of real symplectic square roots in $\Sp(2n)$.


\section{Real symplectic square roots in low dimensions}
\label{sec:2}

\subsection{Real symplectic square roots of $4\times4$ symplectic matrices}

The square-root problem in $\Sp(2)$ is relatively elementary,
and has also been studied in \cite{FrM1} and \cite{Lev}. 
Indeed, we have
\begin{theorem}\label{thm:sqrt.dim2}
    Let $M\in\Sp(2)$. A necessary condition for $M$ to have a real symplectic square root is
$\tr(M)\ge-2$.
Moreover, \(M\) has a real symplectic square root if and only if one of the following mutually exclusive cases holds: 

    (i) $\tr(M)>-2$;

    (ii) $M=-I_2$.
\end{theorem}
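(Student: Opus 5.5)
The argument rests on the identity $\Sp(2)=SL_2(\mathbb{R})$ together with the Cayley--Hamilton theorem for $2\times2$ matrices. Every real symplectic square root $X$ of $M$ has $\det X=1$, so Cayley--Hamilton gives $X^2=\tr(X)X-I$. Hence $M=\tr(X)X-I$, and taking traces yields $\tr(M)=\tr(X)^2-2\ge -2$. This settles the necessary condition. The characterization then splits according to whether $\tr(X)=0$ or not.

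For sufficiency, two cases arise.

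\emph{Case (i): $\tr(M)>-2$.} Set $\tau=\sqrt{\tr(M)+2}>0$ and
$$X=\frac{1}{\tau}(M+I).$$
We must check that $\det X=1$, which then makes $X^2=M$ automatic. Using $\det(M+I)=\det M+\tr M+1=\tr(M)+2=\tau^2$, we get $\det X=\tau^{-2}\det(M+I)=1$. Next, $\tr X=(\tr M+2)/\tau=\tau$. Cayley--Hamilton for $X$ then gives
$$X^2=\tau X-I=(M+I)-I=M.$$

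\emph{Case (ii): $M=-I_2$.} The rotation $X=\begin{pmatrix}0&-1\\1&0\end{pmatrix}$ is symplectic and satisfies $X^2=-I_2$.

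For necessity of the dichotomy, suppose $X^2=M$ with $X\in\Sp(2)$ and $\tr(M)=-2$. From the trace identity above, $\tr(X)^2=\tr(M)+2=0$, so $\tr(X)=0$. The relation $M=\tr(X)X-I$ then forces $M=-I_2$. So whenever $\tr(M)\le-2$, a square root exists only if $\tr(M)=-2$ and $M=-I_2$. In particular, matrices with $\tr M<-2$ (negative hyperbolic) and the nontrivial Jordan block at $-1$ (trace $-2$, $M\ne -I$) have no real symplectic square root.

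Cases (i) and (ii) are mutually exclusive because $\tr(-I_2)=-2$.

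The only delicate point is the determinant identity $\det(M+I)=\tr M+2$ for $\det M=1$; everything else is direct computation.
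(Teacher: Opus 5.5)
Your proof is correct and complete. The paper gives no proof of this statement; it quotes it from earlier work. Your argument is exactly the $2\times2$ case of the Cayley--Hamilton method the paper uses in Section~\ref{sec:2} for $\Sp(4)$ and $\Sp(6)$. Your relation $\tr(X)X=M+I$ is the analogue of \eqref{X.eq1}. Your identity $\tr(X)^2=\tr(M)+2=\det(M+I)$ plays the role of $\sigma_2^2=2a_1+a_2+2=p_M(-1)$ in dimension four, and of $\sigma_3^2=\gamma$ in dimension six. In higher dimensions the paper must handle the degenerate case $\det(M+I)=0$ by a separate Jordan-form analysis. In dimension two it collapses to your one-line observation that $\tr X=0$ forces $M=-I_2$.
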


\begin{remark}
If $\tr(M)<-2$ (in which case $M$ has two distinct negative eigenvalues), or if $\tr(M)=-2$ (in which case $M$ has two identical eigenvalues equal to $-1$) and $M$ is symplectic conjugate to $\begin{pmatrix}-1&\pm1\\0&-1\end{pmatrix}$,
then $M$ admits no real symplectic square root.
\end{remark}

For later convenience, as introduced in \cite{Lon}, we will denote
\begin{eqnarray}
    R(\theta)&=&\begin{pmatrix}\cos\theta&-\sin\theta\\ \sin\theta&\cos\theta\end{pmatrix},\quad \theta\in(0,\pi)\cup(\pi,2\pi),\\
    D(\lambda)&=&\begin{pmatrix}\lambda&0\\0&\frac{1}{\lambda}\end{pmatrix},\quad \lambda\in\mathbb{R}\backslash\{0,\pm1\},\\
    N_1(-1,b)&=&\begin{pmatrix}-1&b\\0&-1\end{pmatrix},\quad b\in\mathbb{R}.
\end{eqnarray}

We now examine the square-root problem in dimension four, where the Cayley--Hamilton theorem gives an explicit relation between a square root $X$ and the given matrix $M$. This allows us to derive a concrete criterion in terms of the coefficients of the characteristic polynomial of $M$, with the degenerate cases treated separately.

Suppose $M,X\in\Sp(4)$ and $X^2=M$. Now the Cayley-Hamilton Theorem states that
\begin{equation}\label{C-H4}
X^4-\tr(X)X^3+(\sigma_2+2)X^2-\tr(X)X+I=0.
\end{equation}
Here $\sigma_2=(x_1+\frac{1}{x_1})(x_2+\frac{1}{x_2})$ where $\sigma(X)=\{x_1,\frac{1}{x_1},x_2,\frac{1}{x_2}\}$.
Plugging $X^2=M$ into (\ref{C-H4}), we obtain
\begin{equation}\label{X.eq1}
(\tr X)X(M+I)=M^2+(\sigma_2+2)M+I.
\end{equation}

Suppose the characteristic polynomial of $M$ is given by
$$
p_{M}(t)=t^4-a_1t^3+a_2t^2-a_1t+1,
$$
where 
\begin{eqnarray}
    a_1&=&\tr M=x_1^2+\frac{1}{x_1^2}
             +x_2^2+\frac{1}{x_2^2}
    \label{a1.dim4}\\
    a_2&=&(x_1^2+\frac{1}{x_1^2})(x_2^2+\frac{1}{x_2^2})+2.
    \label{a2.dim4}
\end{eqnarray}
Here $a_1$ and $a_2$ are uniquely determined by $M$.
Applying the Cayley-Hamilton Theorem to $M$, we have
$$
M^4-a_1M^3+a_2M^2-a_1M+I=0,
$$
which is equivalent to
$$
(M+I)[M^3-(a_1+1)M^2+(a_1+a_2+1)M-(2a_1+a_2+1)I]=-(2a_1+a_2+2)I.
$$
Then $M+I$ is invertible if and only if $2a_1+a_2+2\ne0$.
In such a case, we have
$$
(M+I)^{-1}=-\frac{1}{2a_1+a_2+2}[M^3-(a_1+1)M^2+(a_1+a_2+1)M-(2a_1+a_2+1)I].
$$
Hence by (\ref{X.eq1}), we have
\begin{equation}\label{X.eq2}
(\tr X)X= M+I+\sigma_2A(M+I)^{-1}
\end{equation}

Since $\sigma_2=(x_1+\frac{1}{x_1})(x_2+\frac{1}{x_2})$, we have
\begin{eqnarray}
    \sigma_2^2&=&(x_1+{1\over x_1})^2(x_2+{1\over x_2})^2
    \nonumber\\
    &=&(x_1^2+{1\over x_1^2}+2)(x_2^2+{1\over x_2^2}+2)
    \nonumber\\
    &=&(x_1^2+{1\over x_1^2})(x_2^2+{1\over x_2^2})
    +2(x_1^2+{1\over x_1^2}+x_2^2+{1\over x_2^2})+4
    \nonumber\\
    &=&2a_1+a_2+2,\label{sigma2}
\end{eqnarray}
where we have used (\ref{a1.dim4}) and (\ref{a2.dim4}) in the last equality.
Moreover, we have
\begin{eqnarray}
    (\tr X)^2&=&(x_1+{1\over x_1}+x_2+{1\over x_2})^2
    \nonumber\\
    &=&x_1^2+{1\over x_1^2}+x_2^2+{1\over x_2^2}
    +2(x_1+{1\over x_1})(x_2+{1\over x_2})+4
    \nonumber\\
    &=&a_1+2\sigma_2+4
    \nonumber\\
    &=&a_1+4\pm2\sqrt{2a_1+a_2+2}.\label{trX.4}
\end{eqnarray}
Now we can characterize the $4\times4$ symplectic matrix
whether it has a real symplectic square root.
First, for the general cases, we have the following characterization:
\begin{lemma}\label{obstruction.4d.2}
Suppose $2a_1+a_2+2\ne0$ and
$a_1+4+2\sqrt{2a_1+a_2+2}\ne0$, we have:

    (i) If $2a_1+a_2+2<0$, $M$ has no real symplectic square root.
     
    (ii) If $2a_1+a_2+2>0$ and $a_1+4+2\sqrt{2a_1+a_2+2}<0$,
    $M$ has no real symplectic square root.

    (iii) If $2a_1+a_2+2>0$ and $a_1+4+2\sqrt{2a_1+a_2+2}>0$,
    there exists at least one real symplectic square root of $M$.
\end{lemma}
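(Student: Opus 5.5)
The plan is to extract two real invariants of a hypothetical square root $X$, namely $\sigma_2$ and $\tr X$. Parts (i) and (ii) then follow from reality of these invariants, and part (iii) from an explicit construction of $X$ as a rational function of $M$.

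\textbf{Necessity, parts (i) and (ii).} Suppose $X\in\Sp(4)$ is real with $X^2=M$. The characteristic polynomial of $X$ has real coefficients and is palindromic, $t^4-\tr(X)t^3+(\sigma_2+2)t^2-\tr(X)t+1$. Hence both $\tr X$ and $\sigma_2=(x_1+\frac1{x_1})(x_2+\frac1{x_2})$ are real.
\begin{itemize}
\item By (\ref{sigma2}), $\sigma_2^2=2a_1+a_2+2$, so $2a_1+a_2+2\ge 0$. Under the standing hypothesis $2a_1+a_2+2\neq0$ this proves (i).
\item By (\ref{trX.4}), $(\tr X)^2=a_1+4+2\sigma_2\ge0$ with $\sigma_2=\pm\sqrt{2a_1+a_2+2}$. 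Since $a_1+4-2\sqrt{2a_1+a_2+2}\le a_1+4+2\sqrt{2a_1+a_2+2}$, if the larger quantity is negative then both choices of sign give a negative value of $(\tr X)^2$. This is a contradiction, which proves (ii).
\end{itemize}

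\textbf{Sufficiency, part (iii): definition of $X$.} Set $\sigma:=+\sqrt{2a_1+a_2+2}$ and $\tau:=\sqrt{a_1+4+2\sigma}>0$; both are real. Since $2a_1+a_2+2\ne0$, the matrix $M+I$ is invertible. Guided by (\ref{X.eq1}), define the real rational function
$$f(t)=\frac{t^2+(\sigma+2)t+1}{\tau(t+1)},\qquad X:=f(M)=\frac1\tau\bigl(M^2+(\sigma+2)M+I\bigr)(M+I)^{-1}.$$
This $X$ is real, and it commutes with $M$.

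\textbf{Checking $X^2=M$.} It suffices to show that $h(t):=(t^2+(\sigma+2)t+1)^2-\tau^2t(t+1)^2$ equals $p_M(t)$. Then $\tau^2(M+I)^2(X^2-M)=h(M)=0$ by Cayley--Hamilton, and invertibility of $M+I$ gives $X^2=M$. Both polynomials are monic of degree $4$ and palindromic, so only the $t^3$ and $t^2$ coefficients need to be compared.
\begin{itemize}
\item The $t^3$ coefficient is $2(\sigma+2)-\tau^2=-a_1$.
\item The $t^2$ coefficient is $(\sigma+2)^2+2-2\tau^2=\sigma^2-2a_1-2=a_2$, using $\sigma^2=2a_1+a_2+2$.
\end{itemize}
This matching of coefficients is the main computational step. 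It is the point where the particular choices of $\sigma$ and $\tau$ are used.

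\textbf{Checking that $X$ is symplectic.} Since $M^TJ=JM^{-1}$, we have $X^TJX=f(M^T)JX=Jf(M^{-1})f(M)$. A direct check gives the identity $f(1/t)=f(t)/t$. Hence $f(M^{-1})f(M)=M^{-1}f(M)^2=M^{-1}M=I$, and so $X\in\Sp(4)$ is a real symplectic square root of $M$.

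The one point needing care is in the necessity argument: the reality of $\sigma_2$ comes from the coefficients of the real characteristic polynomial of $X$, not from individual eigenvalues, which may be complex.
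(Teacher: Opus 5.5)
Your proof is correct and follows the paper's route. Parts (i) and (ii) come from the reality of $\sigma_2$ and $\tr X$ via (\ref{sigma2}) and (\ref{trX.4}), and part (iii) comes from the explicit formula (\ref{X.eq2}) with $\sigma_2=+\sqrt{2a_1+a_2+2}$ and $\tr X=\tau$. Your check of $X^2=M$ (via $h=p_M$ and Cayley--Hamilton) and of symplecticity (via $f(1/t)=f(t)/t$) supplies the verification the paper's proof of (iii) only asserts; it is the same device the paper uses in Lemma \ref{Lm:2.9}, and it is genuinely needed because (\ref{X.eq2}) was derived assuming a square root already exists.
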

\begin{proof}
    (i) follows from $\sigma_2\in\mathbb{R}$ and (\ref{sigma2}),
    and (ii) follows from $\tr X\in\mathbb{R}$ and (\ref{trX.4}).

    (iii) By the argument above, $\sigma_2,\tr X\in\mathbb{R}$ in this case.
    By (\ref{X.eq2}), $X$ is a real matrix.
    Moreover, it's the real symplectic square root of $M$.
\end{proof}





\begin{remark}
The condition $a_1+2a_2+2<0$ in (i) implies $M$ possesses one pair of negative eigenvalues, and the another pair its eigenvalues are positive or on $\mathbb{U}\backslash\{\pm1\}$. Thus $M\in\mathcal{EH}^-$ or $M\in\mathcal{H}^{-+}$.

$a_1+4+2\sqrt{2a_1+a_2+2}<0$ in (ii) implies
$a_2<{1\over4}a_1^2+2$.
However, if $M$ possesses a quadruple of eigenvalues
away from $\mathbb{U}\cup\mathbb{R}$, there must
hold $a_2>{1\over4}a_1^2+2$.
Thus under the conditions in (ii), $M$ possesses two pair of eigenvalues on $\mathbb{U}\cup\mathbb{R}$.
Furthermore, $a_1<-4$ implies $M$ possesses one pair
of negative eigenvalues.

Now we can suppose $\sigma(M)=\{\mu_1,{1\over\mu_1},\mu_2,{1\over\mu_2}\}$ where $\mu_1\in\mathbb{R}^-$ and $\mu_2\in\mathbb{U}\cup\mathbb{R}$.
Then $2a_1+a_2+2=(\mu_1+{1\over\mu_1}+2)(\mu_2+{1\over\mu_2}+2)>0$
and $\mu_1+{1\over\mu_1}+2<0$
implies $\mu_2+{1\over\mu_2}+2<0$.
Hence $\mu_2\in\mathbb{R}^-$.Therefore,
$M\in\mathcal{H}^{--}$.

Accordingly, if $M$ has a real symplectic square root,
then $M$ cannot belong to $\mathcal{H}^{--}, \mathcal{EH}^{-}$ and $\mathcal{H}^{-+}$.
\end{remark}

\medskip

Now we consider the degenerate cases:

(1) $2a_1+a_2+2=0$;

(2) $a_1+4+2\sqrt{2a_1+a_2+2}=0$.

\noindent The first case corresponds to $M$ having repeated eigenvalues of $-1$, and the second case corresponds to $M$ having repeated pairs of negative eigenvalues.

For Case (1),
 we have
 \begin{lemma}\label{eig.-1.symplectic.sqrt}
     Suppose $2a_1+a_2+2=0$. Then $M$ possesses eigenvalue $-1$.
     Furthermore, we have

     (i) If $a_1+4<0$, then $-1$ has a multiplicity of $2$. Then $M$ has no real symplectic square root.

     (ii) If $a_1+4>0$, then $-1$ has a multiplicity of $2$.
     Then $M$ has a real symplectic square root if and only if $\dim\ker(M+I)=2$.

     (iii) If $a_1+4=0$, then $-1$ has a multiplicity of $4$.
     Then $M$ has a real symplectic square root if and only if 
     $M\sim N_1(-1,b)\diamond N_1(-1,b)$
     for some $b\in\mathbb{R}$.
\end{lemma}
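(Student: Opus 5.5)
The plan is to factor the characteristic polynomial and then split $M$ along its spectral components. From $p_M(-1)=2a_1+a_2+2=0$ and the palindromic form of $p_M$ we get $p_M(t)=(t+1)^2(t^2-ct+1)$ with $c=a_1+2$, obtained by comparing the $t^3$-coefficients. The second pair of eigenvalues is therefore $\{\mu,\mu^{-1}\}$ with $\mu+\mu^{-1}=a_1+2$. It equals $\{-1,-1\}$ exactly when $a_1=-4$. If $a_1<-4$, then $\mu<0$ and $\mu\ne-1$. If $a_1>-4$, then $\mu$ is positive or lies on $\mathbb{U}\setminus\{-1\}$. This gives the stated multiplicities. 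Whenever $\mu\neq-1$, the generalized eigenspaces $E_{-1}$ and $E_{\mu}\oplus E_{\mu^{-1}}$ are symplectically orthogonal. Hence $M\sim M'\diamond M''$ with $M',M''\in\Sp(2)$, $\sigma(M')=\{-1\}$ and $\sigma(M'')=\{\mu,\mu^{-1}\}$. Any square root $X$ commutes with $M$, so it preserves these subspaces and splits as $X'\diamond X''$ with $X'^2=M'$ and $X''^2=M''$. Conversely, square roots of the two pieces assemble into a square root of $M$.

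\textbf{Cases (i) and (ii).} In case (i), $\tr M''=\mu+\mu^{-1}<-2$, so $M''$ has no real symplectic square root by Theorem \ref{thm:sqrt.dim2}, and hence neither does $M$. In case (ii), $\tr M''>-2$, so $M''$ has a real symplectic square root. By Theorem \ref{thm:sqrt.dim2}, $M'$ (which has trace $-2$) has one iff $M'=-I_2$. Since $\ker(M+I)=\ker(M'+I)$, the condition $M'=-I_2$ is the same as $\dim\ker(M+I)=2$.

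\textbf{Case (iii), sufficiency.} Here I would write the $\diamond$-sum as a tensor product: in the ordering $(x_1,x_2,y_1,y_2)$, $N\diamond N=N\otimes I_2$ and the symplectic form is $J_2\otimes I_2$. Set $N_0=\begin{pmatrix}1&-b/2\\0&1\end{pmatrix}\in\Sp(2)$ and let $X=N_0\otimes R(\pi/2)$. Then
\[
X^2=N_0^2\otimes(-I_2)=N_1(-1,b)\otimes I_2,
\qquad
X^T(J_2\otimes I_2)X=(N_0^TJ_2N_0)\otimes(R(\pi/2)^TR(\pi/2))=J_2\otimes I_2 .
\]
So $X$ is a real symplectic square root of $N_1(-1,b)\diamond N_1(-1,b)$.

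\textbf{Case (iii), necessity.} Let $X\in\Sp(4)$ with $X^2=M$; then $\sigma(X)\subset\{\pm i\}$. I would take the multiplicative Jordan--Chevalley decomposition $X=SU$. Here $S$ is real semisimple, $U$ is unipotent, the two commute, and both are polynomials in $X$, hence both are symplectic. Since $\sigma(S)\subset\{\pm i\}$ and $S$ is semisimple, $S^2=-I$. Thus $S$ is a compatible-type complex structure on $\mathbb{R}^4$, and $h(v,w)=\omega(Sv,w)+i\,\omega(v,w)$ is a nondegenerate Hermitian form on $(\mathbb{R}^4,S)\cong\mathbb{C}^2$. The matrix $U$ commutes with $S$ and preserves $\omega$, so it lies in $U(h)$, and $M=-U^2$ with $U^2$ unipotent in $U(h)$.

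The subcases follow from this. If $U^2=I$, then $M=-I_4=N_1(-1,0)^{\diamond2}$. Otherwise $U^2-I$ is a nonzero complex-linear nilpotent on $\mathbb{C}^2$, so it has rank $1$ over $\mathbb{C}$ and rank $2$ over $\mathbb{R}$. Hence the real Jordan type of $M$ is two $2\times2$ blocks; a single $4\times4$ block, or blocks of sizes $(2,1,1)$, is excluded.

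\textbf{Main obstacle.} The remaining and hardest step is to upgrade this Jordan information to the symplectic normal form $N_1(-1,b)\diamond N_1(-1,b)$ with the \emph{same} $b$. I would do this by choosing a complex basis $\{e,f\}$ with $(U^2-I)f=\beta e$ and $h(e,e)=0$; the kernel of a nilpotent in $U(h)$ is $h$-isotropic. The real frames $\{e,Se\}$ and $\{f,Sf\}$, suitably normalized, then split $\mathbb{R}^4$ into two symplectic-orthogonal copies on which $M$ acts identically, because $S$ commutes with $M$ and preserves $\omega$. The point is that the invariant $\omega((M+I)v,v)$, whose sign is what distinguishes $b>0$ from $b<0$, agrees on the two copies because $S$ intertwines them. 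This is exactly what rules out the mixed form $N_1(-1,1)\diamond N_1(-1,-1)$, and it is where I expect the main care to be needed.
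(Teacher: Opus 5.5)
Your arguments for (i), (ii) and for sufficiency in (iii) are correct. Your set-up for necessity in (iii) is also correct: $X=SU$ with $S,U\in\Sp(4)$, $S^2=-I$, $U\in U(h)$ and $M=-U^2$, so either $M=-I_4$ or $M+I$ has real rank $2$ and $(M+I)^2=0$.

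The gap is the final step, which is the real content of (iii). As you describe it, it does not produce the splitting. The planes $\operatorname{span}\{e,Se\}=\mathbb{C}e=\ker(M+I)$ and $\operatorname{span}\{f,Sf\}=\mathbb{C}f$ cannot be the two $\diamond$-summands, for three reasons. Since $h(e,e)=\omega(Se,e)=0$, the first plane is Lagrangian. The second is not $M$-invariant. And $S$ preserves each of them rather than intertwining them.

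The splitting that works is $V=\operatorname{span}_{\mathbb{R}}\{e,f\}$ together with $SV$. It needs three normalizations you have not supplied:
\begin{itemize}
\item[(a)] Put $e:=(U^2-I)f$. Then $\beta=1$ is real and $V$ is $M$-invariant, with $Me=-e$ and $Mf=-f-e$.
\item[(b)] From $h(U^2e,U^2f)=h(e,f)$ and $h(U^2f,U^2f)=h(f,f)$, deduce $h(e,e)=0$ and $\operatorname{Re}h(e,f)=0$.
\item[(c)] Replace $f$ by $f+ce$, with $c\in\mathbb{C}$ chosen so that $h(f,f)=0$. This is possible because $h(e,f)$ is nonzero and purely imaginary, and it does not disturb (a) or (b).
\end{itemize}
Since $\operatorname{Re}h(v,w)=\omega(Sv,w)$ and $\operatorname{Im}h(v,w)=\omega(v,w)$, you then get $\omega(e,Se)=\omega(f,Sf)=\omega(Se,f)=\omega(e,Sf)=0$ and $\omega(e,f)=\omega(Se,Sf)\neq0$. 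So $V$ and $SV$ are $\omega$-orthogonal, symplectic and $M$-invariant. Moreover $S|_V\colon V\to SV$ is a symplectic isomorphism intertwining $M$, so $M$ is represented by one and the same $N_1(-1,b)$, $b\neq0$, on both summands.

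This intertwining, not the sign of $\omega((M+I)v,v)$, is what forces the two $b$'s to agree. Used on its own, that invariant would also require the classification of the $(2,2)$ Jordan type into the forms $N_1(-1,\pm1)\diamond N_1(-1,\pm1)$.

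With this repair, your proof is genuinely different from the paper's. The paper applies Long's normal form theorem to $X$ itself, writing $PXP^{-1}=\begin{pmatrix}R(\theta)&B\\0&R(\theta)\end{pmatrix}$ with $b_1+b_4=0$, and squares. The identity $R(\theta)B+BR(\theta)=\pm(b_2-b_3)I_2$ then gives $PMP^{-1}=N_1(-1,\pm(b_2-b_3))^{\diamond2}$ directly, so equal $b$'s fall out of a computation. That route is shorter, but it rests on the classification of elements of $\Sp(4)$ with spectrum $\{\pm\sqrt{-1}\}$. It also silently skips the semisimple possibilities for $X$ ($\diamond$-sums of $R(\frac{\pi}{2})$ and $R(\frac{3\pi}{2})$), which only give $M=-I_4$.

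Your route needs no classification. It explains structurally why both blocks carry the same $b$: the complex structure $S$ commutes with $M$ and swaps $V$ and $SV$. It thereby shows directly why $N_1(-1,1)\diamond N_1(-1,-1)$ has no root. In (i)--(ii), your use of Theorem~\ref{thm:sqrt.dim2} via the trace is essentially the paper's argument. It also covers $\mu=1$, which the paper's list $D(\mu_2)$, $R(\theta)$ omits.
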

\begin{proof}
    Suppose $\sigma(M)=\{\mu_1,{1\over\mu_1},\mu_2,{1\over\mu_2}\}$ for some $\mu_1,\mu_2\in\mathbb{C}$.
Then we have
\begin{eqnarray}
    0&=&2a_1+a_2+2
    \nonumber\\
    &=&2(\mu_1+{1\over\mu_1}+\mu_2+{1\over\mu_2})+
    [(\mu_1+{1\over\mu_1})(\mu_2+{1\over\mu_2})+2]+2
    \nonumber\\
    &=&(\mu_1+{1\over\mu_1}+2)(\mu_2+{1\over\mu_2}+2),
\end{eqnarray}
which implies $\mu_1=-1$ or $\mu_2=-1$.
Without loss of generality, we suppose $\mu_1=-1$.
Hence, $\mu_2\in\mathbb{U}\cup\mathbb{R}$.

If $a_1+4\ne0$, we have $\mu_2\ne-1$. 
Thus $-1$ is an eigenvalue of $M$ with multiplicity of $2$.
According to Theorem 3 in p.36 of \cite{Lon},
there exists $P\in\Sp(4)$ such that
$$
PMP^{-1}=N_1(-1,b)\diamond B,
$$
where $B=D(\mu_2)$ if $\mu_2\in\mathbb{R}$;
and $B=R(\theta)$ if $\mu_2\in\mathbb{U}$.

Furthermore, if $a_1+4<0$, we have $\mu_2<0$.
Hence, $M$ has no real symplectic square root.

If $a_1+4>0$, we have $\mu_2>0$ or $\mu_2\in\mathbb{U}\backslash\{\pm1\}$.
Hence $B$ has at least one real symplectic square root.
Then $A$ has at least one real symplectic square root
if and only if $b=0$, i.e., $\dim\ker(M+I)=2$.

If $a_1+4=0$, we have $\mu_2=-1$. 
Thus $-1$ is an eigenvalue of $M$ with multiplicity of $4$.
Now suppose $X$ is a real symplectic square root of
$M$.
Since each eigenvalue of $X$ is a square root of
some eigenvalue of $M$, we have
$\sigma(X)=\{\pm\sqrt{-1},\pm\sqrt{-1}\}$.
Then by Theorem 11 in p.34 of \cite{Lon},
there exists $P\in\Sp(4,\mathbb{R})$ such that
$$
PXP^{-1}= N_2(\omega,b):=\left(\begin{matrix}R(\theta) & B\cr
                                           0      & R(\theta)\cr\end{matrix}\right),
$$
for $\omega=\sqrt{-1},\theta={\pi\over2}$ or $\omega=-\sqrt{-1},\theta={3\pi\over2}$.
Here $B=\begin{pmatrix}b_1&b_2\cr b_3&b_4\end{pmatrix}$ satisfying $b_1+b_4=0$.

Therefore, we have
\begin{eqnarray}\label{-1.multiplicity.4}
PMP^{-1}&=&(PXP^{-1})^2
\nonumber\\
&=&\left(\begin{matrix}R(\theta) & B\cr
                                           0      & R(\theta)\cr\end{matrix}\right)^2
\nonumber\\
&=&\left(\begin{matrix}R(\theta)^2 & R(\theta)B+BR(\theta)\cr
                                           0      & R(\theta)^2\cr\end{matrix}\right)       
\nonumber\\
&=&\left(\begin{matrix}-I_2 & \pm(b_2-b_3)I_2\cr
                                           0      & -I_2\cr\end{matrix}\right)
\nonumber\\
&=&N_1(-1,\pm(b_2-b_3))\diamond N_1(-1,\pm(b_2-b_3)),
\end{eqnarray}
where we have used $\omega=\pm\sqrt{-1}$ and $b_1+b_4=0$ in the second last equality.
Hence, if $b_2-b_3\ne0$, $\dim\ker(M+I)=2$,
and if $b_2-b_3=0$, $\dim\ker(M+I)=4$.

Conversely, if $M\sim N_1(-1,b)\diamond N_1(-1,b)$,
we can construct the real symplectic square root
of $M$ by using (\ref{-1.multiplicity.4}).


\end{proof}

\begin{remark}\label{rm2.4}
    Since $\dim_{\mathbb{C}}\ker(M+I)=\dim_{\mathbb{C}}\ker(X+\sqrt{-1}I)+\dim_{\mathbb{C}}\ker(X-\sqrt{-1}I)\ge2$, 
    a necessary condition for $M$ to have a square root is $\dim\ker(M+I)\ge2$.
    However, this is not a sufficient condition.
    For example, $N_1(-1,1)\diamond N_1(-1,-1)$ has no real symplectic square root.
\end{remark}

\medskip
For Case (2), first note that $2a_1+a_2+2<0$
is impossible. 
In fact, $a_1+4+2\sqrt{2a_1+a_2+2}=0$ implies $a_1<0$
    and $a_2={1\over4}a_1^2+2$.
    Hence $2a_1+a_2+2={1\over4}(a_1+4)^2\ge0$.
In this case, we have
 \begin{lemma}
     Suppose $2a_1+a_2+2>0$ and $a_1+4+2\sqrt{2a_1+a_2+2}=0$. 
     Then $-1\notin\sigma(M)$ and $M$ possesses a pair of negative eigenvalues with multiplicity $2$.
Then $M$ has at least one real symplectic square root if and only if
$$
M\sim D(\lambda)\diamond D(\lambda),
$$
for some $\lambda<0$.
 \end{lemma}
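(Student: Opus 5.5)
First I pin down the spectrum. The hypothesis $a_1+4+2\sqrt{2a_1+a_2+2}=0$ forces $a_1<-4$ and $a_2=\frac14 a_1^2+2$, so $2a_1+a_2+2=\frac14(a_1+4)^2$. Write $\sigma(M)=\{\mu_1,\mu_1^{-1},\mu_2,\mu_2^{-1}\}$ and set $u_i=\mu_i+\mu_i^{-1}$. Then $a_1=u_1+u_2$ and $a_2=u_1u_2+2$. The relation $a_2=\frac14a_1^2+2$ becomes $(u_1-u_2)^2=0$, so $u_1=u_2=a_1/2<-2$. Hence $\mu_1=\mu_2=\lambda$ for a single $\lambda<0$ with $\lambda\neq-1$. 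The pair $\{\lambda,\lambda^{-1}\}$ therefore has multiplicity $2$, and $-1\notin\sigma(M)$.

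\emph{Normal form.} By the normal form theory of \cite{Lon} for the hyperbolic eigenvalue $\lambda$ of multiplicity two, $M$ is symplectically conjugate to exactly one of two matrices. The first is $D(\lambda)\diamond D(\lambda)$. The second is the nondiagonalizable block
\[
N=\begin{pmatrix}J_2(\lambda)&0\\0&(J_2(\lambda)^T)^{-1}\end{pmatrix},
\]
for which $\dim\ker(M-\lambda I)=1$. The key invariant is therefore $\dim\ker(M-\lambda I)\in\{1,2\}$, and $M\sim D(\lambda)\diamond D(\lambda)$ holds exactly when it equals $2$.

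\emph{Sufficiency.} Suppose $M\sim D(\lambda)\diamond D(\lambda)$, and put $\mu=\sqrt{-\lambda}>0$. Take $R(\pi/2)=\begin{pmatrix}0&-1\\1&0\end{pmatrix}$ and form
\[
X=\begin{pmatrix}\mu R(\pi/2)&0\\0&\mu^{-1}R(\pi/2)\end{pmatrix},
\]
acting on the pair of $x$-coordinates and the pair of $y$-coordinates, i.e.\ $A=\mu R(\pi/2)$ and $D=(A^T)^{-1}$. This is a block-diagonal matrix of the form $\mathrm{diag}(A,(A^T)^{-1})$, so it is symplectic. Its square is $\mathrm{diag}(-\mu^2I_2,-\mu^{-2}I_2)=\mathrm{diag}(\lambda I_2,\lambda^{-1}I_2)$, which is $D(\lambda)\diamond D(\lambda)$ written in $\diamond$-coordinates. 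Conjugating $X$ back by the symplectic matrix that produced the normal form gives a real symplectic square root of $M$.

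\emph{Necessity.} Suppose $X\in\Sp(4)$ is real and $X^2=M$. Each eigenvalue of $X$ squares to $\lambda$ or $\lambda^{-1}$, so it lies in $\{\pm i\sqrt{-\lambda},\pm i/\sqrt{-\lambda}\}$. Since $X$ is real, these eigenvalues come in conjugate pairs. Hence $X$ has eigenvalues $i\mu$ and $-i\mu$, each with multiplicity one, and $i/\mu$ and $-i/\mu$, each with multiplicity one. These four numbers are distinct because $\mu\neq1$, so $X$ is diagonalizable over $\mathbb{C}$. It follows that $M=X^2$ is diagonalizable as well. Concretely, the eigenvectors of $X$ for $\pm i\mu$ are both eigenvectors of $M$ for $\lambda$, so $\dim\ker(M-\lambda I)=2$. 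By the dichotomy above, $M\sim D(\lambda)\diamond D(\lambda)$.

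The main obstacle is justifying the two-case normal form for this multiplicity. I would either cite \cite{Lon} directly, or argue by hand as follows. The generalized eigenspaces $E_\lambda$ and $E_{\lambda^{-1}}$ are Lagrangian and paired by the symplectic form. Choosing a basis of $E_\lambda$ in which $M$ is in Jordan form, and taking the dual basis in $E_{\lambda^{-1}}$, yields the symplectic conjugacy to either $D(\lambda)\diamond D(\lambda)$ or $N$.
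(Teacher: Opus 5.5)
Your proof is correct, and your reduction to $u_1=u_2=a_1/2<-2$ is cleaner than the paper's factorization followed by the ``$\Re\lambda<-1$'' step. The sufficiency construction is the same as the paper's: your $X=\mathrm{diag}(\mu R(\pi/2),\mu^{-1}R(\pi/2))$ is its $\mathrm{diag}(\mu J_2,\mu^{-1}J_2)$.

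The necessity argument takes a genuinely different route.

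\begin{itemize}
\item The paper classifies the square root. It puts $X$ into Long's symplectic normal form for the purely imaginary quadruple $\{\pm i\mu,\pm i/\mu\}$, namely $\mathrm{diag}(\mu J_2,\mu^{-1}J_2)$, and squares it. A single symplectic $P$ then conjugates $X$ and $M$ into normal form simultaneously.
\item You classify $M$ instead. Since $E_\lambda$ and $E_{\lambda^{-1}}$ are dually paired Lagrangians, $M\sim\mathrm{diag}(A,(A^T)^{-1})$ with $A=\lambda I_2$ or $A=J_2(\lambda)$. You exclude the Jordan case because any real square root has four simple eigenvalues. It is therefore semisimple, which forces $\dim\ker(M-\lambda I)=2$.
\end{itemize}

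What each route buys:

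\begin{itemize}
\item Your route needs only the hyperbolic normal form, which you justify by hand, rather than the complex-quadruple classification.
\item Your necessity argument never uses that $X$ is symplectic. So it proves something stronger: here the existence of any real square root already forces $M\sim D(\lambda)\diamond D(\lambda)$. This matches the classical criterion that Jordan blocks at negative eigenvalues must pair up.
\item The paper's route, in exchange, directly gives the symplectic normal form of the root itself.
\end{itemize}
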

 \begin{proof}
     Suppose $\sigma(M)=\{\lambda_1,{1\over\lambda_1},\lambda_2,{1\over\lambda_2}\}$ for some $\lambda_1,\lambda_2\in\mathbb{C}$.
Then $2a_1+a_2+2=(\lambda_1+{1\over\lambda_1}+2)(\lambda_2+{1\over\lambda_2}+2)>0$
Then we have
\begin{eqnarray}
    0&=&(a_1+4)^2-4(2a_1+a_2+2)
    \nonumber\\
    &=&\Big[(\lambda_1+{1\over\lambda_1}+2)+(\lambda_2+{1\over\lambda_2}+2)\Big]^2-4(\lambda_1+{1\over\lambda_1}+2)(\lambda_2+{1\over\lambda_2}+2)
\nonumber\\
&=&\Big[(\lambda_1+{1\over\lambda_1}+2)-(\lambda_2+{1\over\lambda_2}+2)\Big]^2
\nonumber\\
&=&(\lambda_1-\lambda_2)^2(1-{1\over\lambda_1\lambda_2})^2
\end{eqnarray}
which implies $\lambda_1=\lambda_2$ or $\lambda_1={1\over\lambda_2}$.
Without loss of generality, we suppose $\lambda_1=\lambda_2=\lambda$.
Furthermore, $a_1+4<0$ implies $\Re\lambda<-1$, 
and hence $\lambda<-1$.

Now suppose $X$ is a real symplectic square root of
$M$.
Since each eigenvalue of $X$ is a square root of
some eigenvalue of $M$, we have
$\sigma(X)=\{\pm\mu\sqrt{-1},\pm{1\over\mu}\sqrt{-1}\}$, where $\mu=\sqrt{|\lambda|}$.
Then by Theorem 2 on p.36 of \cite{Lon},
there exists $P\in\Sp(4,\mathbb{R})$ such that
$$
PXP^{-1}= \begin{pmatrix}
    \mu J_2 & 0\cr 0& {1\over\mu}J_2
\end{pmatrix}.
$$
Therefore, 
we have
\begin{eqnarray}\label{one.pair.multiplicity.2}
PMP^{-1}&=&(PXP^{-1})^2
\nonumber\\
&=&\left(\begin{matrix}\mu J_2 & 0\cr
                                           0      & {1\over\mu}J_2\cr\end{matrix}\right)^2
\nonumber\\
&=&\left(\begin{matrix}-\mu^2 I_2 & 0\cr
                                           0      & -{1\over\mu^2}I_2\cr\end{matrix}\right)       
\nonumber\\
&=&\left(\begin{matrix}\lambda I_2 & 0\cr
                                           0      & {1\over\lambda}I_2\cr\end{matrix}\right)
\nonumber\\
&=&D(\lambda)\diamond D(\lambda).
\end{eqnarray}
where we have used $\lambda=-\mu^2$ in the second last equality.

Conversely, matrix $D(\lambda)\diamond D(\lambda)$
has real symplectic square root
$\begin{pmatrix}
    \mu J_2 & 0\cr 0& {1\over\mu}J_2
\end{pmatrix}$.
\end{proof}

The preceding lemmas cover both the generic situation and the two degenerate cases in which the formula for the square root ceases to be directly applicable. We summarize these conclusions in the following criterion for $4\times 4$ symplectic matrices.

\begin{theorem}\label{Thm:Sp4.square.root} 
Let \(M\in {\rm Sp}(4)\), and suppose that the characteristic polynomial of 
\(M\) is 
\[ p_M(t)=t^4-a_1t^3+a_2t^2-a_1t+1 . 
\] 
Set 
\begin{eqnarray*} 
\alpha&=&a_1+4,\\
\beta&=&2a_1+a_2+2 .
\end{eqnarray*}
When \(\beta\geq 0\), we also set \[ \Theta=\alpha+2\sqrt{\beta}, \] where \(\sqrt{\beta}\) denotes the nonnegative square root. 
A necessary condition for $M$ to have a real symplectic square root is
$\beta\ge0$.

Moreover, \(M\) has a real symplectic square root if and only if one of the following mutually exclusive cases holds: 
\begin{enumerate} 
\item[(i)] \(\beta>0\) and \(\Theta>0\); 
\item[(ii)] \(\beta>0\), \(\Theta=0\), and \(M\sim D(\lambda)\diamond D(\lambda) \) for some \(\lambda<0\);
\item[(iii)] \(\beta=0\), \(\alpha>0\), and $\dim\ker(M+I)=2$;
\item[(iv)] \(\beta=0\), \(\alpha=0\), and $M\sim N_1(-1,b)\diamond N_1(-1,b)$ for some \(b\in\mathbb R\). 

\end{enumerate} 
In all remaining cases, \(M\) has no real symplectic square root. 
\end{theorem}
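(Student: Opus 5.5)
The plan is to assemble Theorem \ref{Thm:Sp4.square.root} from Lemma \ref{obstruction.4d.2}, Lemma \ref{eig.-1.symplectic.sqrt}, and the subsequent lemma on repeated negative pairs. These three results partition the parameter space $(\alpha,\beta)$, so it remains to check that the partition is exhaustive and that each region gets the stated answer.

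\textbf{Necessity of $\beta\ge 0$.} Suppose $X\in\Sp(4)$ with $X^2=M$. Write $\sigma(X)=\{x_1,x_1^{-1},x_2,x_2^{-1}\}$. Because $X$ is real, this spectrum is closed under conjugation and inversion, so $\sigma_2=(x_1+x_1^{-1})(x_2+x_2^{-1})$ is real. I will verify this by checking the possible configurations: both factors are real, or they are complex conjugates of one another. Then (\ref{sigma2}) gives $\beta=\sigma_2^2\ge0$. Equivalently, this is Lemma \ref{obstruction.4d.2}(i) together with the observation that (\ref{sigma2}) does not use invertibility of $M+I$.

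\textbf{Case $\beta>0$.} Here I split on the sign of $\Theta$.
\begin{itemize}
\item If $\Theta>0$, Lemma \ref{obstruction.4d.2}(iii) gives existence, which is case (i).
\item If $\Theta<0$, Lemma \ref{obstruction.4d.2}(ii) excludes a square root.
\item If $\Theta=0$, the repeated-negative-pair lemma gives exactly the criterion $M\sim D(\lambda)\diamond D(\lambda)$, which is case (ii).
\end{itemize}
In Lemma \ref{obstruction.4d.2}(ii) one should note a subtlety. Formula (\ref{trX.4}) allows either sign of $\sqrt{\beta}$, so $(\tr X)^2$ could equal $\alpha-2\sqrt\beta$. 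However, $\alpha-2\sqrt\beta\le\Theta<0$, so both choices are negative and a real $\tr X$ is impossible. I would state this explicitly. The same remark shows that in (iii) some sign choice yields a real $\tr X\neq0$, which justifies dividing by $\tr X$ in (\ref{X.eq2}).

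\textbf{Case $\beta=0$.} Here I split on the sign of $\alpha$.
\begin{itemize}
\item If $\alpha<0$, Lemma \ref{eig.-1.symplectic.sqrt}(i) excludes a square root.
\item If $\alpha>0$, Lemma \ref{eig.-1.symplectic.sqrt}(ii) gives existence if and only if $\dim\ker(M+I)=2$, which is case (iii).
\item If $\alpha=0$, Lemma \ref{eig.-1.symplectic.sqrt}(iii) gives case (iv).
\end{itemize}

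\textbf{Exhaustiveness and exclusivity.} The four listed cases are mutually exclusive because they are distinguished by the sign data of $(\beta,\Theta)$ or $(\beta,\alpha)$. Every remaining configuration falls into one of the non-existence branches above: $\beta<0$; $\beta>0$ with $\Theta<0$; $\beta>0$ with $\Theta=0$ but $M$ not of diagonal-doubled form; or $\beta=0$ with $\alpha<0$, or with the kernel or normal-form condition failing. This proves the final sentence of the theorem.

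\textbf{Main obstacle.} I expect the main difficulty to lie in the degenerate branches rather than in this bookkeeping. In Lemma \ref{eig.-1.symplectic.sqrt}(ii) one must confirm that $N_1(-1,b)$ with $b\ne0$ has no symplectic square root inside the $\diamond$-decomposition. That requires arguing that any square root $X$ preserves the generalized eigenspaces of $M$, so that it splits as $X_1\diamond X_2$ along the splitting $N_1(-1,b)\diamond B$. The argument then reduces to Theorem \ref{thm:sqrt.dim2} in $\Sp(2)$. I would handle this by noting that $X$ commutes with $M$, hence preserves each symplectically orthogonal generalized eigenspace, and then citing Theorem \ref{thm:sqrt.dim2}.
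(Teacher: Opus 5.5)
Your proposal is correct and takes the same route as the paper. The paper states Theorem~\ref{Thm:Sp4.square.root} as a summary of Lemma~\ref{obstruction.4d.2}, Lemma~\ref{eig.-1.symplectic.sqrt} and the repeated-negative-pair lemma, split by the signs of $\beta$, $\Theta$ and $\alpha$ exactly as you do; your two additions only make explicit steps the paper leaves implicit. These are that both values $\alpha\pm2\sqrt{\beta}$ are negative when $\Theta<0$, and that a square root commutes with $M$ and so splits along the $\omega$-orthogonal generalized eigenspaces, which is the content of the paper's later Lemma~\ref{lem:sqrt.under.decompositions}.
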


\subsection{Real symplectic square roots of $6\times6$ symplectic matrices}

We next consider the square-root problem in dimension six. 
As in the \(4\times4\) case, 
the Cayley--Hamilton theorem gives an algebraic relation 
between a symplectic square root $X$ and the given matrix $M$. 
In dimension six, however, three spectral parameters are involved, 
and the resulting conditions are naturally encoded by a cubic polynomial.

Suppose \(M,X\in{\rm Sp}(6)\) and \[ X^2=M. \] 
Let 
\[ \sigma(X)=\left\{x_1,x_1^{-1}, x_2,x_2^{-1}, x_3,x_3^{-1}\right\}, 
\] 
and set 
\[ y_i=x_i+x_i^{-1},\qquad i=1,2,3. 
\] 
Define 
\begin{equation} \label{def.sigma}
\sigma_1=y_1+y_2+y_3,\qquad \sigma_2=y_1y_2+y_2y_3+y_3y_1,\qquad \sigma_3=y_1y_2y_3 . 
\end{equation} 
Then the characteristic polynomial of \(X\) can be written as 
\[ 
p_X(t) = \prod_{i=1}^3\left(t^2-y_it+1\right) = t^6-\sigma_1t^5+(\sigma_2+3)t^4-(\sigma_3+2\sigma_1)t^3 +(\sigma_2+3)t^2-\sigma_1t+1 . 
\] 
By the Cayley--Hamilton theorem, 
\[ 
X^6-\sigma_1X^5+(\sigma_2+3)X^4-(\sigma_3+2\sigma_1)X^3 +(\sigma_2+3)X^2-\sigma_1X+I=0. 
\] 
Using \(X^2=M\), we obtain 
\[ 
M^3-\sigma_1XM^2+(\sigma_2+3)M^2 -(\sigma_3+2\sigma_1)XM +(\sigma_2+3)M-\sigma_1X+I=0. 
\] 
Equivalently, 
\begin{equation} \label{eq1.of.X}
X\left[\sigma_1(M+I)^2+\sigma_3M\right] = (M+I)^3+\sigma_2M(M+I). 
\end{equation} 
Left-multiplying both side of \eqref{eq1.of.X} and substituting $X^2=M$ into it, we obtain
\begin{equation} \label{eq2.of.X}
X\left[(M+I)^3+\sigma_2M(M+I)\right] = M[\sigma_1(M+I)^2+\sigma_3M]. 
\end{equation} 
Taking $\sigma_1$ times \eqref{eq2.of.X} minus \eqref{eq1.of.X} times $M+I$,
we get
\begin{equation} \label{eq.of.X}
X[(\sigma_1\sigma_2-\sigma_3)M(M+I)] = \sigma_1\sigma_3M^2+(\sigma_1^2-\sigma_2)M(M+I)^2-(M+I)^4. 
\end{equation}

We now express these parameters in terms of the coefficients of the characteristic polynomial of \(M\). Write 
\begin{equation} \label{pM}
p_M(t)=t^6-a_1t^5+a_2t^4-a_3t^3+a_2t^2-a_1t+1 . 
\end{equation} 
Since \(M=X^2\), the eigenvalues of \(M\) are \[ x_1^2,x_1^{-2}, x_2^2,x_2^{-2}, x_3^2,x_3^{-2}. \] 
Set 
\[ \mu_i=x_i^2+x_i^{-2}=y_i^2-2,\qquad i=1,2,3. \] 
Then 
\[ p_M(t)=\prod_{i=1}^3(t^2-\mu_it+1). 
\] 
Hence 
\begin{eqnarray} 
a_1&=&\mu_1+\mu_2+\mu_3, \label{a1.dim6}\\
a_2&=&\mu_1\mu_2+\mu_2\mu_3+\mu_3\mu_1+3,\label{a2.dim6}\\
a_3&=&\mu_1\mu_2\mu_3+2(\mu_1+\mu_2+\mu_3). \label{a3.dim6}
\end{eqnarray}
Substituting \(\mu_i=y_i^2-2\) and expressing the result in terms of \(\sigma_1,\sigma_2,\sigma_3\), we obtain 
\begin{eqnarray*} 
a_1&=&\sigma_1^2-2\sigma_2-6,\\
a_2&=&\sigma_2^2-2\sigma_1\sigma_3-4\sigma_1^2+8\sigma_2+15,
\\
a_3&=&\sigma_3^2-2\sigma_2^2+4\sigma_1\sigma_3 +6\sigma_1^2-12\sigma_2-20.  
\end{eqnarray*}

On the other hand,
applying the Cayley-Hamilton Theorem to $M$, we have
$$
M^6-a_1M^5+a_2M^4-a_3M^3+a_2M^2-a_1M+I=0,
$$
which is equivalent to
\begin{eqnarray*}
&&(M+I)\Big[M^5-(a_1+1)M^4+(a_1+a_2+1)M^3-(a_1+a_2+a_3+1)M^2
\\
&&\quad\quad+(a_1+2a_2+a_3+1)M-(2a_1+2a_2+a_3+1)I\Big]
\\
&&=-(2a_1+2a_2+a_3+2)I.
\end{eqnarray*}
Then $M+I$ is invertible if and only if $2a_1+2a_2+a_3+2\ne0$.
In such a case, we have
\begin{eqnarray}
(M+I)^{-1}&=&-\frac{1}{2a_1+2a_2+a_3+2}\Big[M^5-(a_1+1)M^4+(a_1+a_2+1)M^3-(a_1+a_2+a_3+1)M^2
\nonumber\\
&&\quad\quad+(a_1+2a_2+a_3+1)M-(2a_1+2a_2+a_3+1)I\Big].
\label{M+I.inverse}
\end{eqnarray}
Therefore, if $2a_1+2a_2+a_3+2\ne0$ and $\sigma_1\sigma_2-\sigma_3\ne0$,
we can solve for $X$ form \eqref{eq.of.X}, which is given by
\begin{equation}\label{X}
X=\frac{1}{\sigma_1\sigma_2-\sigma_3}\Big[\sigma_1\sigma_3M^2+(\sigma_1^2-\sigma_2)M(M+I)^2-(M+I)^4\Big]M^{-1}(M+I)^{-1}.
\end{equation}

Next, we need to relate $\sigma_1\sigma_2-\sigma_3$ to some formula in terms of the data of $M$, such as $a_1,a_2,a_3$.
By \eqref{def.sigma}, we have
$$
\sigma_1\sigma_2-\sigma_3=(y_1+y_2+y_3)(y_1y_2+y_2y_3+y_3y_1)-y_1y_2y_3
=(y_1+y_2)(y_2+y_3)(y_3+y_1).
$$
Note that
\begin{equation}\label{factor.of.Delta.q}
\Delta_q=(y_1^2-y_2^2)^2(y_2^2-y_3^2)^2(y_3^2-y_1^2)^2
=(\sigma_1\sigma_2-\sigma_3)^2(y_1-y_2)^2(y_2-y_3)^2(y_3-y_1)^2
\end{equation}
is the discriminant of the cubic polynomial
\begin{equation}\label{cubic.polynomial}
q(t)=\prod_{i=1}^3(t-y_i^2)=t^3-(a_1+6)t^2+(4a_1+a_2+9)t-(2a_1+2a_2+a_3+2),
\end{equation}
where we have used $\mu_i=y_i^2-2,i=1,2,3$ and \eqref{a1.dim6}-\eqref{a3.dim6}.
We define
\begin{eqnarray}
    \alpha &=& a_1+6,\label{alpha}\\
    \beta &=& 4a_1+a_2+9,
    \\
    \gamma &=& 2a_1+2a_2+a_3+2.\label{gamma}
\end{eqnarray}
Then the discriminant $\Delta_q$ of $q(t)$ in \eqref{cubic.polynomial}
can be represented in terms of $\alpha,\beta,\gamma$, as shown below:
\begin{equation}\label{Delta.q}
    \Delta_q=\alpha^2\beta^2-4\beta^3-4\alpha^3\gamma
-27\gamma^2+18\alpha\beta\gamma.
\end{equation}
Now we can characterize the $6\times6$ symplectic matrix
whether it has a real symplectic square root under some non-degenerate conditions.

\begin{lemma}\label{Lm:2.9}
Using the notation introduced above,
assume that
\[
\gamma\neq0
\qquad\text{and}\qquad
\Delta_q\neq0.
\]
Then the following statements hold.

\begin{enumerate}
\item[\rm (i)]
If $\gamma<0$, then $M$ has no real symplectic square root.

\item[\rm (ii)]
If $\gamma>0$ and $\Delta_q<0$,
then $M$ has a real symplectic square root.

\item[\rm (iii)]
Suppose that $\gamma>0$ and $\Delta_q>0$.
Then $M$ has a real symplectic square root if and only if
\[
\alpha>0
\qquad\text{and}\qquad
\beta>0,
\]
or equivalently,
\[
a_1+6>0
\qquad\text{and}\qquad
a_2+4a_1+9>0.
\]
\end{enumerate}
\end{lemma}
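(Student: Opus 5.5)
The plan is to translate everything into the roots of the cubic $q$ in \eqref{cubic.polynomial}. Write $z_i=y_i^2=\mu_i+2$, where $\mu_i=\lambda_i+\lambda_i^{-1}$ and $\{\lambda_i,\lambda_i^{-1}\}$, $i=1,2,3$, are the eigenvalue pairs of $M$.

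By \eqref{cubic.polynomial}, $\gamma=z_1z_2z_3$, $\alpha=z_1+z_2+z_3$ and $\beta=z_1z_2+z_2z_3+z_3z_1$. The hypotheses have two consequences:
\begin{itemize}
\item $\Delta_q\neq0$ means the $z_i$ are pairwise distinct, so the three eigenvalue pairs of $M$ are distinct.
\item $\gamma\ne0$ means no $z_i$ vanishes, i.e.\ $-1\notin\sigma(M)$.
\end{itemize}
Moreover $z_i<0$ holds exactly when $\mu_i<-2$, i.e.\ when $\lambda_i$ is negative real (and $\neq-1$). Also $z_i$ is non-real exactly when $\lambda_i$ lies off $\mathbb{U}\cup\mathbb{R}$. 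Since $p_M$ is real, the multiset $\{z_i\}$ is closed under complex conjugation.

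\textbf{Necessity (obstructions).} Suppose $X\in\Sp(6)$ is real with $X^2=M$, and let $y_i=x_i+x_i^{-1}$ as in \eqref{def.sigma}. Then $y_i^2=z_i$, and since $p_X$ is real, the multiset $\{y_1,y_2,y_3\}$ is closed under conjugation.

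If some $z_i<0$, then $y_i$ is nonzero and purely imaginary. Hence $\bar y_i=-y_i$ must equal some $y_j$ with $j\ne i$, which forces $z_j=z_i$. This contradicts $\Delta_q\neq0$. So a real symplectic square root forces every real $z_i$ to be positive.

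Now the cases of the lemma follow:
\begin{itemize}
\item If $\gamma<0$, then either an odd number of real $z_i$ are negative, or there is one real $z_1$ and a conjugate pair $z_2,\bar z_2$ with $\gamma=z_1|z_2|^2$, forcing $z_1<0$. Either way some real $z_i$ is negative, which gives (i).
\item If $\gamma>0$ and $\Delta_q>0$, all $z_i$ are real, and the only possibilities are all three positive or exactly two negative. The latter is excluded by the argument above. Three positive real roots give $\alpha>0$ and $\beta>0$.
\item Conversely, if $\alpha,\beta,\gamma>0$ and all roots are real, Descartes' rule of signs applied to $q(-t)$ shows that no root is negative. This gives the equivalence in (iii).
\end{itemize}

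\textbf{Sufficiency.} In case (ii), $\Delta_q<0$ gives one real root $z_1$ and a non-real conjugate pair $z_2,\bar z_2$. Then $\gamma=z_1|z_2|^2>0$ forces $z_1>0$. In case (iii) with $\alpha,\beta>0$, all $z_i>0$. In both situations $\mu_i>-2$ for every real $\mu_i$, and the remaining pairs lie off $\mathbb{R}$. So $\sigma(M)\cap\mathbb{R}^-=\emptyset$, and Theorem A, applied after conjugating $M$ to Wonenburger form by Theorem 1.1, yields a real symplectic square root.

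Alternatively, to stay self-contained as in Lemma \ref{obstruction.4d.2}, one can use the explicit formula \eqref{X}:
\begin{itemize}
\item In case (ii), choose $y_1=\sqrt{z_1}>0$, $y_2$ a square root of $z_2$, and $y_3=\bar y_2$. In case (iii), take $y_i=\sqrt{z_i}>0$. Then $\sigma_1,\sigma_2,\sigma_3$ are real.
\item $\sigma_1\sigma_2-\sigma_3=(y_1+y_2)(y_2+y_3)(y_3+y_1)\ne0$. In case (ii), $y_1+y_2\ne0$ since $y_2\notin\mathbb{R}$, and $y_2+\bar y_2=2\,\mathrm{Re}\,y_2\ne0$ since $z_2$ is not a negative real. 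In case (iii), all $y_i$ are positive.
\item $M+I$ is invertible because $\gamma\ne0$. Hence \eqref{X} defines a real matrix $X$.
\end{itemize}

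The main obstacle on this second route is verifying that $X$ is actually a symplectic square root, not merely real. Since $X=f(M)$ is a rational function of $M$, one would check on the distinct spectrum that $f(\lambda)^2=\lambda$ and $f(\lambda)f(\lambda^{-1})=1$, which gives $X^TJX=J\,f(M^{-1})f(M)=J$. This is why I would rely on Theorem A for sufficiency and use the root-counting argument only for the obstructions.
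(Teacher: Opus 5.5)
Your proof is correct. The obstruction half is essentially the paper's, while the sufficiency half takes a genuinely different route.

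\textbf{Obstructions.} The paper excludes the $(+,-,-)$ pattern in (iii) by the same observation you use: the purely imaginary square roots of two distinct negative roots cannot form a conjugate pair. For (i) the paper argues more directly, from $\gamma=\sigma_3^2$ with $\sigma_3\in\mathbb{R}$, which does not even need $\Delta_q\neq0$. Your single argument ($\bar y_i=-y_i$ must equal another $y_j$, forcing $z_j=z_i$) covers (i) and (iii) at once. Your Descartes step replaces the paper's check that $(+,-,-)$ together with $\alpha>0$ forces $\beta<0$; the two are equivalent.

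\textbf{Sufficiency.} The paper does not invoke Theorem A here. It takes exactly the ``second route'' you set aside. It sets $X=F(M)$ with $F$ as in \eqref{F}. It obtains $F(M)^2=M$ from the exact polynomial identity \eqref{check.for.sqare} together with Cayley--Hamilton. It gets symplecticity from the rational-function identity $F(t^{-1})=t^{-1}F(t)$.

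Working with exact identities rather than values on the spectrum matters here. $M$ may have eigenvalue $1$ with a nontrivial $2\times2$ Jordan block (when some $z_i=4$), so checking $f(\lambda)^2=\lambda$ only at eigenvalues, as you suggested, would not suffice for that route.

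Your route instead reduces sufficiency to the observation $\sigma(M)\cap\mathbb{R}^-=\emptyset$, then cites Theorem A via Theorem 1.1. This is not circular, since the proof of Theorem A uses nothing from Section 2.

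\textbf{What each route buys.} The paper's route keeps Section 2 self-contained as a model computation. It also yields the explicit formula \eqref{X} for the root as a rational function of $M$, on which the remark after the $6\times6$ criterion relies. Your route is shorter and shows that the lemma is really a spectral statement: the sign conditions on the roots of $q$ are exactly the condition that $M$ has no negative real eigenvalues.
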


\begin{proof} First, we have
\begin{eqnarray*}
    \sigma_3^2&=&y_1^2y_2^2y_3^2\\
    &=&(\mu_1+2)(\mu_2+2)(\mu_3+2)
    \nonumber\\
    &=&\mu_1\mu_2\mu_3+2(\mu_1\mu_2+\mu_2\mu_3+\mu_3\mu_1)+4(\mu_1+\mu_2+\mu_3)+8
    \nonumber\\
    &=&(a_3-2a_1)+2(a_2-3)+4a_1+8
    \nonumber\\
    &=&2a_1+2a_2+a_3+2,\\
    &=&\gamma,
\end{eqnarray*}
where we have used $y_i^2=\mu_i+2$ in the second equality, and used \eqref{a1.dim6}-\eqref{a3.dim6} in the fourth equality.
The existence of a square root of $M$ implies that $\sigma_3\in\mathbb{R}$.
Thus $\gamma\geq0$ is necessary for the existence of a real
symplectic square root. This proves {\rm (i)}.

We then establish a construction that will be used in the proofs
of {\rm (ii)} and {\rm (iii)}. 
By \eqref{Delta.q}, $\Delta_q\ne0$ implies $\sigma_1\sigma_2-\sigma_3\neq0$.
Then we can define a rational function with respect to $t$:
\begin{equation}\label{F}
F(t)=\frac{
\sigma_1\sigma_3t^2
+(\sigma_1^2-\sigma_2)t(t+1)^2
-(t+1)^4
}{
(\sigma_1\sigma_2-\sigma_3)t(t+1)
}.
\end{equation}
Since
\[
\gamma=p_M(-1)=\det(M+I)\neq0,
\]
the matrix $M+I$ is invertible. Since $M$ is symplectic, $M$
is also invertible. Thus matrix $F(M)$ is well defined. 
Set
\[
X=F(M).
\]

Recall that $\gamma=\sigma_3^2$. Moreover,
from \eqref{a1.dim6}-\eqref{a3.dim6} and \eqref{alpha}-\eqref{gamma}, we have
\[
\alpha=\sigma_1^2-2\sigma_2,\qquad
\beta=\sigma_2^2-2\sigma_1\sigma_3.
\]
Then a direct calculation gives
\begin{equation}\label{check.for.sqare}
\Bigl[
\sigma_1\sigma_3t^2
+(\sigma_1^2-\sigma_2)t(t+1)^2
-(t+1)^4
\Bigr]^2\
-t\Bigl[
(\sigma_1\sigma_2-\sigma_3)t(t+1)
\Bigr]^2\
=
\bigl[(t+1)^2-\sigma_1^2t\bigr]p_M(t).
\end{equation}
Applying \eqref{check.for.sqare} to $M$ and using the Cayley-Hamilton theorem,
we obtain
\begin{equation}\label{FM.squre}
F(M)^2=M.
\end{equation}

Then we verify that $F(M)$ is symplectic. It follows
directly from \eqref{F} that
\begin{equation}\label{F.t.inv}
F(t^{-1})=t^{-1}F(t).
\end{equation}
For any real rational function $R$ defined on $\sigma(M)$, the
symplectic identity
\[
M^TJ=JM^{-1}
\]
implies
\[
R(M)^TJ=JR(M^{-1}).
\]
Therefore, by \eqref{FM.squre} and \eqref{F.t.inv},
\[
\begin{aligned}
F(M)^TJF(M)
&=JF(M^{-1})F(M)\
&=JM^{-1}F(M)^2\
&=J.
\end{aligned}
\]
Thus $F(M)\in\operatorname{Sp}(6)$.
It remains to check under what conditions,
$F(M)$ is a real matrix.

We now prove {\rm (ii)}. 
If $\Delta_q<0$, then \eqref{cubic.polynomial} has one
real root $r$ and a pair of nonreal conjugate roots
$(z,\overline z)$. Since their product is $\gamma>0$,
\[
r|z|^2=\gamma>0,
\]
and hence $r>0$. Choose
\[
y_1=\sqrt r\in\mathbb R.
\]
Choose a square root $y_2$ of $z$, and set
\[
y_3=\overline{y_2}.
\]
Then
\[
y_2^2=z,\qquad
y_3^2=\overline z,
\]
and the corresponding quantities
$\sigma_1,\sigma_2,\sigma_3$ are real. The construction
\eqref{F}-\eqref{F.t.inv} therefore yields a real symplectic square root
of $M$. This proves {\rm (ii)}.

Finally, suppose that $\gamma>0$ and $\Delta_q>0$. Then \eqref{cubic.polynomial}
has three distinct real roots. Since their product is positive,
their signs are either
\[
(+,+,+)
\qquad\text{or}\qquad
(+,-,-).
\]

If all three roots are positive, then
\[
\alpha>0
\qquad\text{and}\qquad
\beta>0.
\]
Conversely, suppose that the roots of \eqref{cubic.polynomial} are
\[
r,\quad -u,\quad -v,
\qquad r,u,v>0.
\]
Then
\[
\alpha=r-u-v,
\qquad
\beta=uv-r(u+v).
\]
If $\alpha>0$, then $r>u+v$, and consequently
\[
\beta
<uv-(u+v)^2
<0.
\]
Thus the sign pattern (+,-,-) is incompatible with $\alpha>0$ and $\beta>0$.
It follows that the three roots of \eqref{cubic.polynomial} are all positive if and
only if 
\[
\alpha>0
\qquad\text{and}\qquad
\beta>0.
\]
When these two inequalities hold, choose
$y_1,y_2,y_3\in\mathbb R$ whose squares are the three positive
roots of \eqref{cubic.polynomial}. Then $\sigma_1,\sigma_2,\sigma_3$ are real, and
the construction above gives a real symplectic square root of $M$.

Conversely, suppose that the roots of \eqref{cubic.polynomial} have signs
(+,-,-). If $M$ had a real symplectic square root, then
\[
t^3-\sigma_1t^2+\sigma_2t-\sigma_3
\]
would be a real polynomial with roots $y_1,y_2,y_3$. The positive
root of \eqref{cubic.polynomial} has real square roots, whereas the two distinct
negative roots have purely imaginary square roots with different
absolute values. No choice of these two imaginary square roots can
form a complex conjugate pair. Hence
\[
{y_1,y_2,y_3}
\]
cannot be invariant under complex conjugation, contradicting the
fact that the polynomial above has real coefficients. Therefore
$M$ has no real symplectic square root in this case.

Since
\[
\alpha=a_1+6,
\qquad
\beta=4a_1+a_2+9,
\]
the assertion in {\rm (iii)} follows.
\end{proof}

\begin{remark}
The condition $\gamma=2a_1+2a_2+a_3+2<0$ in (i) implies that $M$ possesses at least one pair of negative eigenvalues with multiplicity $1$. 
Conditions $\gamma>0$ and $\Delta_q<0$ in (ii) imply that $M$ possesses a pair of positive eigenvalues
and a quadruple of eigenvalues
away from $\mathbb{U}\cup\mathbb{R}$.
Conditions $\gamma>0$ and $\Delta_q>0$ in (iii) imply that $M$ possesses three distinct pairs of real eigenvalues, and at least one pair of which are positive.
If additionally $\alpha>0$ and $\beta>0$, then all three pairs are positive.
\end{remark}

Next we consider the two degenerate cases:

(1) $2a_1+2a_2++a_3+2=0$;

(2) $\Delta_q=0$.

\noindent The first case corresponds to $M$ having repeated eigenvalues of $-1$, and the second case corresponds to $M$ having repeated pairs of eigenvalues.
Prior to treating the two cases, we state the following lemma.
\begin{lemma}[Square roots under spectral $\diamond$-decompositions]
\label{lem:sqrt.under.decompositions}
Let $M\in\operatorname{Sp}(2n)$. Suppose that, for some
$M_i\in\operatorname{Sp}(2n_i), i=1,2$, with
$n_1+n_2=n$, one has
\[
M\sim M_1\diamond M_2,
\]
and
\[
\sigma(M_1)\cap\sigma(M_2)=\emptyset.
\]
Then $M$ has a real symplectic square root if and only if both
$M_1$ and $M_2$ have real symplectic square roots.
\end{lemma}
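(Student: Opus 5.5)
The plan is to handle the two directions separately. Sufficiency is a direct block computation. Necessity comes from the fact that any square root commutes with $M$, and therefore preserves the spectral subspaces singled out by the $\diamond$-decomposition. In both directions we first reduce to the case $M=M_1\diamond M_2$ exactly. If $PMP^{-1}=M_1\diamond M_2$ with $P\in\Sp(2n)$, then $X\mapsto PXP^{-1}$ is a bijection between real symplectic square roots of $M$ and those of $M_1\diamond M_2$.

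\emph{Sufficiency.} Suppose $X_i\in\Sp(2n_i)$ and $X_i^2=M_i$ for $i=1,2$. Write $E_i$ for the coordinate subspace of $\mathbb{R}^{2n}$ carrying the $i$-th factor of the $\diamond$-sum. The $\diamond$-sum is multiplicative, since it is block-diagonal multiplication with respect to the splitting $\mathbb{R}^{2n}=E_1\oplus E_2$. Hence $(X_1\diamond X_2)^2=X_1^2\diamond X_2^2=M_1\diamond M_2$. Moreover, the standard form $J_{2n}$ is itself the $\diamond$-sum $J_{2n_1}\diamond J_{2n_2}$. Therefore
\[
(X_1\diamond X_2)^TJ_{2n}(X_1\diamond X_2)=(X_1^TJ_{2n_1}X_1)\diamond(X_2^TJ_{2n_2}X_2)=J_{2n},
\]
so $X_1\diamond X_2$ is a real symplectic square root.

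\emph{Necessity.} Let $X\in\Sp(2n)$ be real with $X^2=M=M_1\diamond M_2$. Then $XM=X^3=MX$.
\begin{itemize}
\item Let $p_i$ be the characteristic polynomial of $M_i$. It is a real polynomial, and $p_1,p_2$ are coprime because $\sigma(M_1)\cap\sigma(M_2)=\emptyset$.
\item Using $p_i(M_1\diamond M_2)=p_i(M_1)\diamond p_i(M_2)$, Cayley--Hamilton gives $p_i(M_i)=0$. For $j\ne i$, the matrix $p_i(M_j)$ is invertible, since its eigenvalues $p_i(\mu)$, $\mu\in\sigma(M_j)$, are nonzero.
\item Consequently $\ker p_i(M)=E_i$ exactly.
\item Since $X$ commutes with $M$, it commutes with $p_i(M)$ and so preserves $\ker p_i(M)=E_i$.
\end{itemize}
Thus $X$ is block-diagonal with respect to $E_1\oplus E_2$, i.e.\ $X=X_1\diamond X_2$ for real $2n_i\times 2n_i$ matrices $X_i$. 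Comparing blocks in $X^2=M$ gives $X_i^2=M_i$. Comparing blocks in $X^TJ_{2n}X=J_{2n}=J_{2n_1}\diamond J_{2n_2}$ gives $X_i^TJ_{2n_i}X_i=J_{2n_i}$, so $X_i\in\Sp(2n_i)$.

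The only step with real content is identifying the invariant subspaces of $X$ with the $\diamond$-coordinate subspaces; this is where the disjointness of the spectra is used. I expect it to be the main point to state carefully, in particular that the kernels are taken over $\mathbb{R}$ and that the $p_i$ are real. Everything else is bookkeeping of the $\diamond$-block structure.
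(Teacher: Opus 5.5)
Your proposal is correct and follows essentially the paper's argument. Both reduce to $M=M_1\diamond M_2$, use $X_1\diamond X_2$ for sufficiency, and for necessity use $XM=MX$ plus $\sigma(M_1)\cap\sigma(M_2)=\emptyset$ to force $X$ to be block-diagonal before comparing blocks in $X^2=M$ and $X^TJX=J$. The only difference is that you justify block-diagonality via $\ker p_i(M)=E_i$ for the coprime characteristic polynomials, whereas the paper cites uniqueness of the zero solution of the Sylvester equations $M_1X_{12}=X_{12}M_2$ and $M_2X_{21}=X_{21}M_1$; these are equivalent standard facts.
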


\begin{proof}
The existence of a real symplectic square root is invariant under
symplectic similarity. Indeed, if $S\in\operatorname{Sp}(2n)$
and $X^2=M$, then $(SXS^{-1})^2=SMS^{-1}$,
and $SXS^{-1}$ is again symplectic. 
Hence it suffices to consider
the case
\[
M=M_1\diamond M_2.
\]

First, if both $M_1$ and $M_2$ have real symplectic square roots,
denoted by $X_1$ and $X_2$, respectively,
then we have
\[
(X_1\diamond X_2)^2=X_1^2\diamond X_2^2=
M_1\diamond M_2=M.
\]
Thus $M$ has a real symplectic square root.

Conversely, suppose $X$ is a real symplectic square root of $M$.
The $\diamond$-decomposition of $M$ determines an
$M$-invariant symplectic direct-sum decomposition
\[
\mathbb R^{2n}=V_1\oplus^{\omega}V_2,
\qquad
\dim V_i=2n_i,
\]
such that, after identifying $V_i$ symplectically with
$\mathbb R^{2n_i}$, the restriction of $M$ to $V_i$ is
represented by $M_i$. 
With respect to this decomposition, write
associated with $M_1\diamond M_2$, and write $X$, with respect
to this decomposition, as
\[
X=
\begin{pmatrix}
X_{11}&X_{12}\\
X_{21}&X_{22}
\end{pmatrix}.
\]
Equation $MX=X^3=XM$ gives
\begin{equation}
M_1X_{12}=X_{12}M_2,
\qquad
M_2X_{21}=X_{21}M_1.
\label{Sylv.eq}
\end{equation}
Since $\sigma(M_1)\cap\sigma(M_2)=\emptyset$,
the Sylvester equations in \eqref{Sylv.eq} admit only the zero solution.
Consequently,
\[
X_{12}=0,
\qquad
X_{21}=0,
\]
and hence
\[
X=X_1\diamond X_2
\]
for some real matrices $X_i\in\mathbb{R}^{2n_i\times2n_i}$.

It follows from $X^2=M_1\diamond M_2$ that
\begin{equation}\label{X1.X2.sqare}
X_1^2=M_1,
\qquad
X_2^2=M_2.
\end{equation}
Moreover, the symplectic form associated with the above direct-sum
decomposition is
\[
J_{2n}=J_{2n_1}\oplus J_{2n_2}.
\]
Since $X$ is symplectic, $X^TJ_{2n}X=J_{2n}$.
Using the block diagonal form of $X$, this identity becomes
\[
X_1^TJ_{2n_1}X_1=J_{2n_1},
\qquad
X_2^TJ_{2n_2}X_2=J_{2n_2}.
\]
Thus $X_i\in\operatorname{Sp}(2n_i),\;i=1,2$.
Together with \eqref{X1.X2.sqare}, this shows that $X_i$ is a real symplectic
square root of $M_i$ for $i=1,2$.
\end{proof}

For Degenerate Case (1),
 we have
 \begin{lemma}\label{eig.-1.symplectic.sqrt.dim6}
     Suppose $\gamma=2a_1+2a_2+a_3+2=0$. Then $M$ possesses eigenvalue $-1$.
     Furthermore, we have

     (i) If $\beta=4a_1+a_2+9\ne0$, then eigenvalue $-1$ has a multiplicity of $2$. Then $M$ has a real symplectic square root if and only if 
     $M\sim-I_2\diamond M_4$ for some $M_4\in\Sp(4)$ that also has a real symplectic square root.
    
     (ii) If $\beta=4a_1+a_2+9=0$ and $\alpha=a_1+6<0$, then eigenvalue $-1$ has a multiplicity of $4$.
     Then $M$ has no real symplectic square root.

     (iii) If $\beta=4a_1+a_2+9=0$ and $\alpha=a_1+6>0$, then eigenvalue $-1$ has a multiplicity of $4$.
     Then $M$ has a real symplectic square root if and only if 
     $M\sim N_1(-1,b)\diamond N_1(-1,b)\diamond M_2$
     for some $b\in\mathbb{R}$, and $M_2=D(\lambda)$ for some $\lambda>0$ or
     $M_2=R(\theta)$ for some $\theta\in(0,\pi)\cup(\pi,2\pi)$.
     
     (iv) If $\alpha=\beta=0$, i.e., $a_1=-6,a_2=15,a_3=-20$, then eigenvalue $-1$ has a multiplicity of $6$.
     Then $M$ has a real symplectic square root if and only if 
     $M\sim-I_2\diamond N_1(-1,b)\diamond N_1(-1,b)$
     for some $b\in\mathbb{R}$,
     or
     $$
M\sim \begin{pmatrix}
-1&1&0&0&0&s\\
0&-1&0&0&0&0\\
0&0&-1&s&0&0\\
0&0&0&-1&0&0\\
0&0&-2s&1&-1&0\\
0&-2s&0&0&0&-1
\end{pmatrix},
$$
for $s=\pm1$.
\end{lemma}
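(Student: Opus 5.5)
We follow the pattern of Lemma \ref{eig.-1.symplectic.sqrt}. Write $\sigma(M)=\{\mu_1^{\pm1},\mu_2^{\pm1},\mu_3^{\pm1}\}$, so that $q(t)=\prod_i(t-(\mu_i+2))$. Then
\[
\gamma=q(0)\cdot(-1)^{3+1}=\prod_i(\mu_i+2),
\]
and, since $\mu_i+2=(\mu_i+\mu_i^{-1}+2)$ in the sense of trace parameters, $\gamma=0$ forces some $\mu_i=-1$; take it to be $\mu_1$. Then $q(t)=t\,(t^2-\alpha t+\beta)$, and the remaining two roots $\mu_2+\mu_2^{-1}+2$ and $\mu_3+\mu_3^{-1}+2$ are the roots of $t^2-\alpha t+\beta$. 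This gives the counting in each item. If $\beta\ne0$, the other two roots are nonzero, so $-1$ has multiplicity exactly $2$. If $\beta=0$, one more root vanishes, so the multiplicity is at least $4$. The last root then equals $\alpha$, and its sign says whether the remaining pair is negative ($\alpha<0$) or lies in $\mathbb{R}^+\cup\mathbb U\setminus\{\pm1\}$ ($\alpha>0$). Finally $\alpha=0$ gives multiplicity $6$.

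\textbf{Items (i)--(iii).} Use the spectral $\diamond$-decomposition (Theorem 3, p.~36 of \cite{Lon}) to write $M\sim M_{-1}\diamond M'$, where $\sigma(M_{-1})=\{-1\}$ and $-1\notin\sigma(M')$. Lemma \ref{lem:sqrt.under.decompositions} reduces the problem to the two factors.

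In (i), $M_{-1}=N_1(-1,b)\in\Sp(2)$. By Theorem \ref{thm:sqrt.dim2} it has a root iff $b=0$, i.e. $M_{-1}=-I_2$, and then $M_4=M'$ must have a root.

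In (ii), $M'$ is $D(\lambda)$ with $\lambda<0$, $\lambda\ne-1$. It has no root by Theorem \ref{thm:sqrt.dim2}, since its trace is below $-2$.

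In (iii), $M'$ is $D(\lambda)$ with $\lambda>0$ or $R(\theta)$, and in either case it has a root. Moreover $M_{-1}\in\Sp(4)$ has all four eigenvalues equal to $-1$, so Lemma \ref{eig.-1.symplectic.sqrt}(iii) applies and gives the form $N_1(-1,b)\diamond N_1(-1,b)$.

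\textbf{Item (iv).} Here all six eigenvalues are $-1$. If $X^2=M$ with $X\in\Sp(6)$, then $\sigma(X)\subset\{\pm i\}$, and since $X$ is real, $\sigma(X)=\{i,i,i,-i,-i,-i\}$.

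The plan is to classify $X$ up to symplectic conjugacy using Long's normal forms for the eigenvalue pair $\pm i$ (Theorem 11, p.~34 of \cite{Lon}). Complex Jordan blocks of $X$ at $i$ have sizes forming a partition of $3$: namely $(1,1,1)$, $(2,1)$ or $(3)$. Symplectic normal forms for odd-sized blocks at $\pm i$ carry a sign invariant (Krein type). The possible $X$ are then:
\[
R(\pi/2)^{\diamond 3}\ \text{(up to signs)},\qquad N_2(\omega,b)\diamond R(\theta),\qquad \text{a single }6\times6\text{ block at }\pm i.
\]

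We square each case.
\begin{itemize}
\item In the first case, $R(\theta)^2=-I_2$, so $X^2=-I_6$. This falls under the family $-I_2\diamond N_1(-1,0)^{\diamond2}$.
\item In the second case, $X^2=N_1(-1,\pm(b_2-b_3))^{\diamond2}\diamond(-I_2)$ by the computation in \eqref{-1.multiplicity.4}, giving $-I_2\diamond N_1(-1,b)^{\diamond2}$.
\item In the third case, the square of the size-3 block at $\pm i$ has one Jordan block of size $3$ at $-1$ over $\mathbb C$, occurring twice. Over $\mathbb R$ this is a pair of $3$-blocks at $-1$. We put this explicitly into Wonenburger form and conjugate it to the displayed $6\times6$ matrix with $s=\pm1$. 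The parameter $s$ is the surviving sign invariant; continuous parameters should be absorbable by symplectic conjugation.
\end{itemize}

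Conversely, each listed normal form is exhibited as the square of the corresponding $X$, in the spirit of \eqref{-1.multiplicity.4}.

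\textbf{Main obstacle.} The hard step is the size-$3$ case. It requires two things. First, we need the correct normal form of a $6\times6$ symplectic Jordan block at $\pm i$ and the identification of its invariants. Second, we must carry out an explicit squaring and then find a symplectic conjugation bringing $X^2$ to the displayed matrix. I would try first to take Long's normal form $N_3(\pm i)$, compute its square directly, and then match it to the displayed form. To do the matching, I would compare $\ker(M+I)$ and the signature of the form $\omega(\cdot,(M+I)^2\cdot)$ on the top of the Jordan chains. This form detects $s$, and checking it would also show that the two signs give non-conjugate matrices.
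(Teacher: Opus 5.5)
Your multiplicity count and items (i)--(iii) follow the paper's argument: spectral $\diamond$-splitting, Lemma~\ref{lem:sqrt.under.decompositions}, Theorem~\ref{thm:sqrt.dim2} and Lemma~\ref{eig.-1.symplectic.sqrt}. In (iv), you handle the Jordan types $(1,1,1)$ and $(2,1)$ of $X$ at $i$ correctly with Long's forms; the paper reaches the same squares through the Wonenburger subcases $X_{11}=0$ and $X_{11}=\mathrm{diag}(J_2(0),0)$. (A small point in (iii), shared with the printed statement: $\alpha>0$ also allows $\sigma(M')=\{1\}$, e.g.\ $M'=I_2$ or $N_1(1,\pm1)$, and these also have roots. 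The right condition is $\tr M'>-2$, as in the paper's proof.)

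\textbf{The gap.} The remaining case of (iv), a single Jordan block of size $3$ at $\pm i$, is only a plan. You do not write down or justify a normal form for such $X$, you do not compute $X^2$, and you produce no conjugation to the displayed matrix. So neither direction is established: not that every such $X^2$ is conjugate to the displayed matrix, and not that the displayed matrix has a root.

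\textbf{The proposed invariant is vacuous.} Put $N=M+I$. The $\omega$-adjoint is $N^\dagger=M^{-1}N$. When $N^3=0$ this gives $(N^2)^\dagger=M^{-2}N^2=N^2$, hence $\omega(v,N^2w)=-\omega(w,N^2v)$. So on the tops of length-$3$ chains the form is skew-symmetric, has no signature, and cannot detect $s$.

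\textbf{The two signs give conjugate matrices.} Let $X_\pm$ be the Wonenburger matrix with $X_{11}=J_3(0)$ and $X_{12}=\pm K_3$. Then $-X_+$ is again Wonenburger. With $R=\mathrm{diag}(1,-1,1)$ we have $R(-J_3(0))R^{-1}=J_3(0)$ and $R(-K_3)R^T=-K_3$, and $X_{21}=X_{12}^{-1}(X_{11}^2-I)$ is forced. Hence $R_*(-X_+)=X_-$, so $X_+^2\sim X_-^2$, and your expected non-conjugacy is false. The lemma only needs ``for some $s$'', so this does not hurt the statement, but it does defeat your matching strategy.

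\textbf{How to close it.} Follow the paper. In Wonenburger form the single-block case is $X_{11}=J_3(0)$, and $X_{12}$ is Hankel with $\det X_{12}=-b_{13}^3\neq0$. An upper-triangular Toeplitz $R$ (commuting with $J_3(0)$) brings $X_{12}$ to $sK_3$, and then $X_{21}$ is determined. So $X^2$ is explicit, and $Q_*$ with $Q=Q_2Q_1$ yields the displayed matrix. This settles both directions at once.

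Alternatively, odd-size parts carry only skew forms, so symplectic matrices with spectrum $\{-1\}$ and Jordan type $(3,3)$ form a single conjugacy class in $\Sp(6)$. Then it suffices to check $\operatorname{rank}(M+I)=4$, $\operatorname{rank}(M+I)^2=2$ and $(M+I)^3=0$ for both matrices. That route, however, requires citing the classification of unipotent classes in $\Sp(6)$, which your proposal does not do.
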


\begin{remark}
    Cases (ii)-(iv) imply $\Delta_q=0$. In Cases (i), however, both the cases $\Delta_q=0$ and $\Delta_q\ne0$ can occur.
\end{remark}

\begin{proof}
If $\gamma=0$, by \eqref{pM}, we have
\begin{eqnarray*}
    p_M(t)&=&(t+1)^2[t^4-(a_1+2)t^3+(2a_1+a_2+3)t^2-(a_1+2)t+1]\\
    &=&(t+1)^6-(a_1+6)t(t+1)^4+(4a_1+a_2+9)t^2(t+1)^2
    \\
    &=&(t+1)^6-\alpha t(t+1)^4+\beta t^2(t+1)^2.
\end{eqnarray*}
Then $\beta\ne0$ gives multiplicity $2$ for the eigenvalue $-1$;
$\beta=0, \alpha\ne0$ gives multiplicity $4$;
and $\alpha=\beta=0$ gives multiplicity $6$.

    Suppose $\sigma(M)=\{\lambda_1,{1\over\lambda_1},\lambda_2,{1\over\lambda_2},\lambda_3,{1\over\lambda_3}\}$ for some $\lambda_1,\lambda_2,\lambda_3\in\mathbb{C}$.
Without loss of generality, we suppose $\lambda_1=-1$.

If $\beta\ne0$, we have $\lambda_2,\lambda_3\ne-1$.
Then there exists $b\in\mathbb{R}$ and $M_4\in\Sp(4)$ such that
$$
M\sim N_1(-1,b)\diamond M_4.
$$
By a similar argument to that in Remark \ref{rm2.4},
a necessary condition for $M$ to have a square root is $\dim\ker(M+I)\ge2$.
Note that $\sigma(M_4)=\{\lambda_2,{1\over\lambda_2},\lambda_3,{1\over\lambda_3}\}$, which does not contain $-1$.
Thus, by Lemma \ref{lem:sqrt.under.decompositions}, Theorem \ref{thm:sqrt.dim2} and Theorem \ref{Thm:Sp4.square.root},
if $M$ has a real symplectic square root, if and only $b=0$ and
$M_4\in\Sp(4)$ has a real symplectic square root.
This proves (i).

If $\beta=0$ and $\alpha\ne0$, we can suppose $\lambda_1=\lambda_2=-1$ and $\lambda_3\ne-1$.
Hence, $\lambda_3\in\mathbb{U}\cup\mathbb{R}$.
Then there exists $b\in\mathbb{R}$ and $M_4\in\Sp(4)$ such that
$$
M\sim \widetilde{M}_4\diamond M_2,
$$
where $M_2\in\Sp(2)$ does not have eigenvalue $-1$, and $\widetilde{M}_4\in\Sp(4)$ has only eigenvalues $-1$ with multiplicity $4$.
By Lemma \ref{lem:sqrt.under.decompositions}, $M$ has a real symplectic square root if and only if both $\widetilde{M}_4$ and $M_2$ have real symplectic square root.
Then by Lemma \ref{eig.-1.symplectic.sqrt}, there exists $b\in\mathbb{R}$ such that $\widetilde{M}_4\sim N_1(-1,b)^{\diamond2}$.
Moreover, since $-1\notin\sigma(M_2)$, by Theorem \ref{thm:sqrt.dim2},
$M_2$ has a real symplectic square root if and only if $\tr(M_2)>-2$,
which is equivalent $\alpha=a_1+6>0$.
This proves (ii) and (iii).

We now address Case (iv), which is the more complicated of the cases.

If $\alpha=\beta=0$, then $M$ has eigenvalue $-1$ with a multiplicity of $6$.
Suppose $X$ is a real symplectic square root of $M$.
Then we have $\sigma(X)=\{\pm\sqrt{-1}\}$.
Note that $X$ may not itself be a Wonenburger matrix,
However, up to a suitable symplectic conjugation, we may assume that 
$X=\begin{pmatrix}X_{11} & X_{12}\\X_{21} & X_{11}^T\end{pmatrix}$ is a Wonenburger matrix. 
In that case, we now have $M\sim X^2$.
Since $\sigma(M)=\{-1\}$ implies $\sigma(X)=\{\pm\sqrt{-1}\}$, 
it follows that $\sigma(X_{11})=\{0\}$.
Then, up to a suitable symplectic conjugation, there are only three possible subcases:
$$(1)\; X_{11}=0; \quad(2)\; X_{11}={\rm diag}(J_2(0),0); \quad(3)\; X_{11}=J_3(0).$$

For Subcase (1), if $X_{11}=0$, we have $X_{12}X_{21}=X_{11}^2-I_3=-I_3$,
which implies that both $X_{12}$ and $X_{21}$ are invertible.
Since $X_{12}$ is symmetric, there exists an invertible matrix $R$ such
that $RBR^T={\rm diag}(b_1,b_2,b_3)$ where $b_i=\pm1,i=1,2,3$.
Thus, under the $R_*$ action, we have
\begin{equation*}
        R_*\left(\begin{matrix}X_{11} & X_{12}\\X_{21} & X_{11}^T\end{matrix}\right)
        =\begin{pmatrix}
    0& 0& 0& b_1& 0& 0\\
    0& 0& 0& 0& b_2& 0\\
    0& 0& 0& 0& 0& b_3\\
    -b_1& 0& 0& 0& 0& 0\\
    0& -b_2& 0& 0& 0& 0\\
    0& 0& -b_3& 0& 0& 0
\end{pmatrix}.
    \end{equation*}
Direct computation shows
$$
R_*(X^2)=-I_6,
$$
where we have used $b_i^2=1$ for $i=1,2,3$.
Therefore, we have $M=-I_6$.

For Subcase (2), if $X_{11}={\rm diag}(J_2(0),0)$, we also have $X_{12}X_{21}=X_{11}^2-I_3=-I_3$,
which implies that both $X_{12}$ and $X_{21}$ are invertible.
Suppose $R\in {\rm GL}_3(\mathbb{R})$ commutes with $X_{11}$.
A direct computation shows that $R$ has the form
$$
R=\begin{pmatrix}
    r_{11}& r_{12}& r_{13}\\
    0& r_{11}& 0\\
    0& r_{32}& r_{33}
\end{pmatrix},
$$
where $\det(R)=r_{11}^2r_{33}\ne0$.
Denote by $X_{12}=\{b_{ij}\}_{1\le i,j\le3}$.
By $X_{12}=X_{12}^T$ and $X_{11}X_{12}=X_{12}X_{11}^T$, we have
$$
X_{12}=\begin{pmatrix}
    b_{11}& b_{12}& b_{13}\\
    b_{12}& 0& 0\\
    b_{13}& 0& b_{33}
\end{pmatrix},
$$
Moreover, we denote by $RBR^T=\{\widetilde{b}_{ij}\}_{1\le i,j\le3}$.
Then we have
\begin{eqnarray*}
    \widetilde{b}_{13}&=&\widetilde{b}_{31}=b_{12}r_{11}r_{32}+
    b_{13}r_{11}r_{33}+b_{33}r_{13}r_{33},
    \\
    \widetilde{b}_{23}&=&\widetilde{b}_{32}=0.
\end{eqnarray*}
Since $\det(X_{12})=-b_{12}^2b_{33}\ne0$,
we set $r_{13}=-\frac{b_{12}r_{11}r_{32}+
    b_{13}r_{11}r_{33}}{b_{33}r_{33}}$.
Then we have $\widetilde{b}_{13}=\widetilde{b}_{31}=0$.
Therefore, up to a suitable $R_*$-action, we may assume $b_{13}=0$.
Furthermore, setting $r_{13}=r_{32}=0$,
we obtain
$$
RX_{12}R^T=\begin{pmatrix}
    r_{11}^2b_{11}+2r_{11}r_{12}b_{12}& r_{11}^2b_{12}& 0\\
    r_{11}^2b_{12}& 0& 0\\
    0& 0& r_{33}^2b_{33}
\end{pmatrix}.
$$
Since $\det(X_{12})=-b_{12}^2b_{33}\ne0$,
we set $r_{11}=|b_{12}|^{-1/2}, r_{12}=-\frac{r_{11}b_{11}}{2b_{12}}, r_{33}=|b_{33}|^{-1/2}$.
Then we have
$$
RX_{12}R^T=\begin{pmatrix}
    0& s_1& 0\\
    s_1& 0& 0\\
    0& 0& s_2
\end{pmatrix},
$$
where $s_1,s_2=\pm1$.
Thus, under the $R_*$ action, we have
\begin{equation*}
        R_*\left(\begin{matrix}X_{11} & X_{12}\\X_{21} & X_{11}^T\end{matrix}\right)
        =\begin{pmatrix}
    0& 1& 0& 0& s_1& 0\\
    0& 0& 0& s_1& 0& 0\\
    0& 0& 0& 0& 0& s_2\\
    0& -s_1& 0& 0& 0& 0\\
    -s_1& 0& 0& 1& 0& 0\\
    0& 0& -s_2& 0& 0& 0
\end{pmatrix}.
\end{equation*}
Direct computation shows
$$
R_*(X^2)
=\begin{pmatrix}
    -1& 0& 0& 2s_1& 0& 0\\
    0& -1& 0& 0& 0& 0\\
    0& 0& -1& 0& 0& 0\\
    0& 0& 0& -1& 0& 0\\
    0& -2s_1& 0& 0& -1& 0\\
    0& 0& 0& 0& 0& -1
\end{pmatrix}
=\begin{pmatrix}
    -1& 2s_1\\
    0& -1&
\end{pmatrix}\diamond\begin{pmatrix}
    -1& 0\\
    -2s_1& -1&
\end{pmatrix}\diamond(-I_2).
$$
Since $\begin{pmatrix}
    -1& 0\\
    -2s_1& -1&
\end{pmatrix}$ is symplectic conjugate to $\begin{pmatrix}
    -1& 2s_1\\
    0& -1&
\end{pmatrix}$, we obtain
$$
M\sim R_*(X^2)\sim-I_2\diamond N_1(-1,2s_1)^{\diamond2}.
$$

For Subcase (3), if $X_{11}=J_3(0)$, we now have 
$$X_{12}X_{21}=X_{11}^2-I_3=\begin{pmatrix}
    -1& 0& 1\\
    0& -1& 0\\
    0& 0& -1
\end{pmatrix},$$
which implies that both $X_{12}$ and $X_{21}$ are invertible.
Suppose $R\in {\rm GL}_3(\mathbb{R})$ commutes with $X_{11}$.
A direct computation shows that $R$ has the form
$$
R=\begin{pmatrix}
    r_{11}& r_{12}& r_{13}\\
    0& r_{11}& r_{12}\\
    0& 0& r_{11}
\end{pmatrix},
$$
where $\det(R)=r_{11}^3\ne0$.
Denote by $X_{12}=\{b_{ij}\}_{1\le i,j\le3}$.
By $X_{12}=X_{12}^T$ and $X_{11}X_{12}=X_{12}X_{11}^T$, we have
$$
X_{12}=\begin{pmatrix}
    b_{11}& b_{12}& b_{13}\\
    b_{12}& b_{13}& 0\\
    b_{13}& 0& 0
\end{pmatrix},
$$
Moreover, we denote by $RBR^T=\{\widetilde{b}_{ij}\}_{1\le i,j\le3}$.
Then we have
$$
RX_{12}R^T=\begin{pmatrix}
    \widetilde{b}_{11}& \widetilde{b}_{12}& \widetilde{b}_{13}\\
    \widetilde{b}_{12}& \widetilde{b}_{13}& 0\\
    \widetilde{b}_{13}& 0& 0
\end{pmatrix},
$$
where
\begin{eqnarray*}
    \widetilde{b}_{11}&=&r_{11}^2b_{11}+2r_{11}r_{12}b_{12}+(r_{12}^2+2r_{11}r_{13})b_{13},
    \\
    \widetilde{b}_{12}&=&r_{11}^2b_{12}+2r_{11}r_{12}b_{13},
    \\
    \widetilde{b}_{13}&=&r_{11}^2b_{13}.
\end{eqnarray*}
Since $\det(X_{12})=-b_{13}^3\ne0$,
we set $r_{11}=|b_{13}|^{-1/2}, r_{12}=-\frac{r_{11}b_{12}}{2b_{13}}$ and 
$r_{13}=-\frac{r_{11}^2b_{11}+2r_{11}r_{12}b_{12}+r_{12}^2b_{13}}{2r_{11}b_{13}}$.
Then we have
$$
RX_{12}R^T=\begin{pmatrix}
    0& 0& s\\
    0& s& 0\\
    s& 0& 0
\end{pmatrix},
$$
where $s=\pm1$.
Thus, under the $R_*$ action, we have
\begin{equation*}
        R_*\left(\begin{matrix}X_{11} & X_{12}\\X_{21} & X_{11}^T\end{matrix}\right)
        =\begin{pmatrix}
    0& 1& 0& 0& 0& s\\
    0& 0& 1& 0& s& 0\\
    0& 0& 0& s& 0& 0\\
    0& 0& -s& 0& 0& 0\\
    0& -s& 0& 1& 0& 0\\
    -s& 0& s& 0& 1& 0
\end{pmatrix}.
\end{equation*}
Direct computation shows
$$
R_*(X^2)
=\begin{pmatrix}
    -1& 0& 2& 0& 2s& 0\\
    0& -1& 0& 2s& 0& 0\\
    0& 0& -1& 0& 0& 0\\
    0& 0& 0& -1& 0& 0\\
    0& 0& -2s& 0& -1& 0\\
    0& -2s& 0& 2& 0& -1
\end{pmatrix}
$$
Let
$$
Q_1=\begin{pmatrix}
    2^{-\frac{3}{4}}& 0& 0\\
    0& 2^{-\frac{1}{4}}& 0\\
    0& 0& 2^{\frac{1}{4}}
\end{pmatrix},\quad
Q_2=\begin{pmatrix}
    1& 0& 0\\
    0& 0& 1\\
    0& 1& 0
\end{pmatrix},
$$
and $Q=Q_2Q_1$. Then we have
\[
Q_*(R_*(X^2))
=\begin{pmatrix}
-1&1&0&0&0&s\\
0&-1&0&0&0&0\\
0&0&-1&s&0&0\\
0&0&0&-1&0&0\\
0&0&-2s&1&-1&0\\
0&-2s&0&0&0&-1
\end{pmatrix}.
\]

Conversely, each of the two normal forms listed in {\rm (iv)}
does admit a real symplectic square root. Indeed, the first one is
obtained from Subcase {\rm (1)} when $X_{11}=0$, or from
Subcase {\rm (2)} when
$X_{11}=\operatorname{diag}(J_2(0),0)$. The second one is exactly
the square of the Wonenburger normal form obtained in Subcase
{\rm (3)}, after the $Q_*$-action above. Therefore both
normal forms have real symplectic square roots, and the proof of
{\rm (iv)} is complete.
\end{proof}

For Degenerate Case (2), first note that $2a_1+2a_2+a_3+2<0$
is impossible if $M$ admits a real symplectic square root,
and the case $2a_1+2a_2+a_3+2=0$ has already been handled by Lemma \ref{eig.-1.symplectic.sqrt.dim6}.
We are thus left with the following situation:
\begin{lemma}
     Suppose $2a_1+2a_2+a_3+2>0$ and $\Delta_q=0$. 
     Then $-1\notin\sigma(M)$ and the cubic polynomial $q(t)$ in \eqref{cubic.polynomial} has a real multiple root.
    Moreover, the following statements hold.
    \begin{enumerate}
        \item[\rm (i)]
        If $\alpha\beta-9\gamma\geq0$,
        then $M$ has a real symplectic square root.
        
        \item[\rm (ii)]
        If $\alpha\beta-9\gamma<0$,
        Then $M$ has a real symplectic square root if and only if
        \[
        M\sim
        D(\lambda)^{\diamond2}\diamond M_2,
        \]
        where $\lambda<0$ and $M_2\in\operatorname{Sp}(2)$.
        Moreover, $M_2=D(\mu)$ for some $\mu>0$ or
        $M_2=R(\theta)$ for some $\theta\in(0,\pi)\cup(\pi,2\pi)$.
    \end{enumerate}
\end{lemma}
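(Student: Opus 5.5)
The whole argument runs through the cubic $q(t)$ in \eqref{cubic.polynomial}, whose roots are $y_i^2=\mu_i+2$. Write them as $r,r,s$. The first step is to read off the eigenvalue data. Since $q$ has real coefficients and $\Delta_q=0$, the repeated root is real, hence $s$ is real too. From $\gamma=q(0)\cdot(-1)^{3+1}\cdot(-1)=r^2s>0$ we get $r\neq0$ and $s>0$. Moreover $\gamma=\det(M+I)\neq0$, so $-1\notin\sigma(M)$. Expanding $\alpha=2r+s$, $\beta=r^2+2rs$, $\gamma=r^2s$ gives the key identity
\[
\alpha\beta-9\gamma=2r^3-4r^2s+2rs^2=2r(r-s)^2 .
\]
Thus $\alpha\beta-9\gamma\ge0$ forces either $r>0$, or $r=s$, in which case $r=s>0$. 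In both cases all three roots of $q$ are positive. Conversely, $\alpha\beta-9\gamma<0$ happens exactly when $r<0<s$.

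\emph{Case (i).} Take $y_1,y_2,y_3$ to be the positive square roots of $r,r,s$. Then $\sigma_1,\sigma_2,\sigma_3$ are real, and
\[
\sigma_1\sigma_2-\sigma_3=(y_1+y_2)(y_2+y_3)(y_3+y_1)>0 .
\]
In the proof of Lemma \ref{Lm:2.9}, the hypothesis $\Delta_q\neq0$ was used only to ensure $\sigma_1\sigma_2-\sigma_3\neq0$. The identity \eqref{check.for.sqare} is a polynomial identity valid whenever $p_M(t)=\prod_i(t^2-(y_i^2-2)t+1)$. Therefore the same rational function $F$ of \eqref{F} is defined on $\sigma(M)$, because $M$ and $M+I$ are invertible. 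It satisfies $F(M)^2=M$ and, by \eqref{F.t.inv}, $F(M)\in\Sp(6)$, and it is real. This proves (i).

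\emph{Case (ii).} Here $r<0<s$. The double root gives a reciprocal pair $\lambda,\lambda^{-1}$ with $\lambda+\lambda^{-1}=r-2<-2$, so $\lambda<-1$ has multiplicity $2$. The simple root gives a pair with $\lambda_3+\lambda_3^{-1}=s-2>-2$. These spectra are disjoint, so $M\sim M_4\diamond M_2$ with $\sigma(M_4)=\{\lambda,\lambda,\lambda^{-1},\lambda^{-1}\}$ and $\tr M_2=s-2>-2$.

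By Lemma \ref{lem:sqrt.under.decompositions}, $M$ has a real symplectic square root iff both $M_4$ and $M_2$ do. $M_2$ always has one by Theorem \ref{thm:sqrt.dim2}(i), and it is $D(\mu)$ with $\mu>0$ or $R(\theta)$. The boundary value $s=4$ gives trace $2$; this parabolic case also has a root by the same theorem, and I would mention it as a remark.

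For $M_4$, its own cubic-type data are the roots $r,r$. Hence its parameters in Theorem \ref{Thm:Sp4.square.root} are $\beta_4=r^2>0$ and $\alpha_4=2r$, so $\Theta=2r+2|r|=0$. We are therefore exactly in the hypothesis of the Case (2) lemma for $\Sp(4)$, and $M_4$ has a root iff $M_4\sim D(\lambda)^{\diamond2}$. This proves (ii).

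The main obstacle I expect is bookkeeping: correctly translating the $\Sp(6)$ data $(\alpha,\beta,\gamma)$ into the $\Sp(4)$ invariants of the block $M_4$. I would handle this by factoring $p_M=p_{M_4}p_{M_2}$ and recomputing $a_1,a_2$ for $M_4$ directly from $\mu=r-2$.
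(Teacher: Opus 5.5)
Your proof is correct. Part (ii) coincides with the paper's argument: split $M\sim M_4\diamond M_2$ along the disjoint spectra, note $\tr M_2=s-2>-2$, compute $\widetilde\alpha=2r$, $\widetilde\beta=r^2$, $\widetilde\Theta=2r+2|r|=0$ for $M_4$, and apply the $\Sp(4)$ criterion.

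Part (i) takes a different route. The paper treats the double-root and triple-root cases separately. For a double root $r_1>0$ it again decomposes $M\sim M_4\diamond M_2$ and gets roots of the two factors from Theorem \ref{Thm:Sp4.square.root} (with $\widetilde\Theta=4r_1>0$) and Theorem \ref{thm:sqrt.dim2}. It uses the global rational function $F$ of Lemma \ref{Lm:2.9} only for the triple root. You instead parametrize both cases as roots $r,r,s$ and read off from $\alpha\beta-9\gamma=2r(r-s)^2$ that (i) means all three roots are positive. You then apply $F(M)$ uniformly.

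This is sound, because \eqref{check.for.sqare} is indeed a formal identity. Put $P(x)=\prod_i(x^2-y_ix+1)$, $E(t)=(t+1)^3+\sigma_2t(t+1)$ and $G(t)=\sigma_1(t+1)^2+\sigma_3t$. Then $P(x)=E(x^2)-xG(x^2)$, so $P(x)P(-x)=p_M(x^2)$ gives $E^2-tG^2=p_M$. The numerator and denominator of $F$ are $\sigma_1tG-(t+1)E$ and $\sigma_1E-(t+1)G$, and the left side of \eqref{check.for.sqare} factors as $[(t+1)^2-\sigma_1^2t](E^2-tG^2)$. Hence only $\gamma\neq0$ and $\sigma_1\sigma_2-\sigma_3>0$ are needed, exactly as you claim.

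What each route buys: yours avoids the spectral splitting and the $\Sp(4)$ classification in case (i), and it produces an explicit root that is a rational function of $M$. The paper's route is more modular, recycling the lower-dimensional criteria.

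Your remark on $s=4$ is correct and worth keeping. There $M_2$ has the double eigenvalue $1$, so it is conjugate to $I_2$ or to $\begin{pmatrix}1&\pm1\\0&1\end{pmatrix}$. Neither is of the form $D(\mu)$ or $R(\theta)$, so the ``Moreover'' clause of (ii) should read $\tr(M_2)>-2$, as in the paper's summary theorem for $\Sp(6)$.
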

\begin{proof}
By
$\gamma=2a_1+2a_2+a_3+2=p_M(-1)=\det(M+I)>0$,
we have $-1\notin\sigma(M)$.
Since $\Delta_q=0$, the cubic polynomial
$q(t)=t^3-\alpha t^2+\beta t-\gamma$ in \eqref{cubic.polynomial}
has a real multiple root. We divide the proof into two cases.

First suppose that $q(t)$ has a double root $r_1$ and a distinct
simple root $r_2$. Then
\[
q(t)=(t-r_1)^2(t-r_2).
\]
Comparing coefficients gives
\[
\alpha=2r_1+r_2,\qquad
\beta=r_1^2+2r_1r_2,\qquad
\gamma=r_1^2r_2.
\]
Since $\gamma>0$, we have $r_2=\frac{\gamma}{r_1^2}>0$.
Moreover,
\[
\alpha^2-3\beta=(r_1-r_2)^2>0,
\]
and
\[
\alpha\beta-9\gamma=2r_1(r_1-r_2)^2.
\]
Thus the sign of the double root $r_1$ is exactly the sign of
$\alpha\beta-9\gamma$.

The two roots $r_1$ and $r_2$ determine two coprime reciprocal
factors of the characteristic polynomial of $M$:
\[
p_4(t)=\bigl(t^2-(r_1-2)t+1\bigr)^2
\]
and
\[
p_2(t)=t^2-(r_2-2)t+1.
\]
Since $r_1\neq r_2$, the corresponding spectra are disjoint. Hence,
by the symplectic spectral decomposition and Lemma \ref{lem:sqrt.under.decompositions}, 
we may write
\[
M\sim M_4\diamond M_2,
\]
where $M_4\in\operatorname{Sp}(4)$ has characteristic polynomial
$p_4(t)$, and $M_2\in\operatorname{Sp}(2)$ has characteristic
polynomial $p_2(t)$. Moreover, $M$ has a real symplectic square
root if and only if both $M_4$ and $M_2$ have real symplectic
square roots.

Since $r_2>0$, we have
$\operatorname{tr}(M_2)=r_2-2>-2$.
Therefore $M_2$ has a real symplectic square root by
Theorem \ref{thm:sqrt.dim2}.

If $\alpha\beta-9\gamma>0$, then $r_1>0$. 
Denote the corresponding quantities of $M_4$ in Theorem \ref{Thm:Sp4.square.root}
by $\widetilde\alpha,\widetilde\beta$, and $\widetilde\Theta$.
Then we have
\begin{eqnarray*}
\widetilde\alpha&=&2r_1\\
\widetilde\beta&=&r_1^2>0\\
\widetilde\Theta&=&\widetilde\alpha+2\sqrt{\widetilde\beta}=4r_1>0.
\end{eqnarray*}
By Theorem \ref{Thm:Sp4.square.root}, $M_4$ has a real symplectic square root. Since
$M_2$ also has one, Lemma \ref{lem:sqrt.under.decompositions} implies that $M$ has a real
symplectic square root.

If $\alpha\beta-9\gamma<0$, then $r_1<0$. 
In this case
\[
\widetilde\Theta=2r+2|r|=0.
\]
By Theorem \ref{Thm:Sp4.square.root}, the four-dimensional component $M_4$ has a real
symplectic square root if and only if
\[
M_4\sim D(\lambda)\diamond D(\lambda).
\]
Since $M_2$ always has a real symplectic square root, Lemma \ref{lem:sqrt.under.decompositions}
shows that $M$ has a real symplectic square root if and only if
\[
M\sim D(\lambda)^{\diamond2}\diamond M_2,
\]
where $M_2\in\operatorname{Sp}(2)$ satisfies $\operatorname{tr}(M_2)>-2$.
Equivalently, $M_2=D(\mu)$ for some $\mu>0$, or
$M_2=R(\theta)$ for some $\theta\in(0,\pi)\cup(\pi,2\pi)$.
This proves the assertion in the double-root case.

We now consider the case where $q(t)$ has a triple root. Write
\[
q(t)=(t-r)^3.
\]
Then
\[
\gamma=r^3.
\]
Since $\gamma>0$, it follows immediately that $r>0$.
Equivalently, the triple root cannot be negative.
Moreover, $\gamma>0$ implies that $M+I$ is invertible,
and $\sigma_1\sigma_2-\sigma_3=8r^{3/2}>0$.
Therefore the same rational functional construction used in the
proof of Lemma \ref{Lm:2.9} is well defined and gives a real symplectic
matrix $F(M)$ satisfying
\[
F(M)^2=M.
\]
Thus $M$ has a real symplectic square root in the triple-root case.
Note that in such a case, we have
\[
\alpha=3r,\qquad \beta=3r^2,\qquad \gamma=r^3.
\]
Hence
\[
\alpha\beta-9\gamma=9r^3-9r^3=0.
\]
Thus the triple-root case is included in case \({\rm (i)}\).

Combining this with the double-root analysis above, we obtain:
if $\alpha\beta-9\gamma\ge0$,
then $M$ has a real symplectic square root;
if $\alpha\beta-9\gamma<0$,
then $M$ has a real symplectic square root if and only if
\[
M\sim D(\lambda)^{\diamond2}\diamond M_2,
\]
where $\lambda<0$ and $M_2\in\operatorname{Sp}(2)$ satisfies
$\operatorname{tr}(M_2)>-2$.
This completes the proof.
\end{proof}

The preceding lemmas cover the nondegenerate case as well as the
two degenerate cases in which either $-1$ is an eigenvalue or the
cubic polynomial $q(t)$ in \eqref{cubic.polynomial} has multiple roots. We summarize these
results in the following criterion for $6\times6$ symplectic
matrices.
\begin{theorem}
Let \(M\in {\rm Sp}(6)\), and suppose that the characteristic polynomial of 
\(M\) is 
\[ p_M(t)=t^6-a_1t^5+a_2t^4-a_3t^3+a_2t^2-a_1t+1. 
\] 
Set 
\begin{eqnarray*}
\alpha&=&a_1+6,\\
\beta&=&4a_1+a_2+9,\\
\gamma&=&2a_1+2a_2+a_3+2,\\
\Delta_q&=&\alpha^2\beta^2-4\beta^3-4\alpha^3\gamma
-27\gamma^2+18\alpha\beta\gamma.
\end{eqnarray*}
A necessary condition for $M$ to have a real symplectic square root is
$\gamma\ge0$.

Moreover, $M$ has a real symplectic square root if and only if one of
the following mutually exclusive cases holds.

\begin{enumerate}
\item[\rm (i)]
$\gamma>0,\quad
\Delta_q<0$.

\item[\rm (ii)]
$\gamma>0,\quad
\Delta_q>0,\quad
\alpha>0,\quad
\beta>0$.

\item[\rm (iii)]
$\gamma>0,\quad
\Delta_q=0,\quad
\alpha\beta-9\gamma\geq0$.

\item[\rm (iv)]
$\gamma>0,\quad
\Delta_q=0,\quad
\alpha\beta-9\gamma<0$,
and $M\sim D(\lambda)^{\diamond2}\diamond M_2$
for some $\lambda<0$ and some $M_2\in\operatorname{Sp}(2)$,
with $\tr(M_2)>-2$.

\item[\rm (v)]
$\gamma=0,\quad
\beta\ne0$,
and
$M\sim -I_2\diamond M_4$
for some $M_4\in\operatorname{Sp}(4)$ that has a real symplectic square root.


\item[\rm (vi)]
$\gamma=0,\quad
\beta=0,\quad
\alpha>0$,
and
$M\sim
N_1(-1,b)^{\diamond2}\diamond M_2$
for some $b\in\mathbb R$, where $M_2\in\Sp(2)$
with $\tr(M_2)>-2$.

\item[\rm (vii)]
$\gamma=\beta=\alpha=0$,
and either
\[
M\sim
-I_2\diamond N_1(-1,b)^{\diamond2}
\]
for some $b\in\mathbb R$, or
\[
M\sim \begin{pmatrix}
-1&1&0&0&0&s\\
0&-1&0&0&0&0\\
0&0&-1&s&0&0\\
0&0&0&-1&0&0\\
0&0&-2s&1&-1&0\\
0&-2s&0&0&0&-1
\end{pmatrix},
\]
for some $s=\pm1$.
\end{enumerate}

In all remaining cases, $M$ has no real symplectic square root.
\end{theorem}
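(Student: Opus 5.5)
The plan is to assemble the $6\times6$ criterion from the lemmas just proved, using $\gamma=\det(M+I)=p_M(-1)$ to split into cases. Throughout, $q(t)$ denotes the cubic in \eqref{cubic.polynomial}.

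\emph{Necessity of $\gamma\ge0$.} If $X$ is a real symplectic square root, then $\gamma=\sigma_3^2$ with $\sigma_3=y_1y_2y_3$. The $y_i$ are the roots of the real polynomial $t^3-\sigma_1t^2+\sigma_2t-\sigma_3$, whose coefficients are real because they are coefficients of $p_X$. Hence $\sigma_3\in\mathbb R$, and so $\gamma\ge0$, exactly as in part (i) of Lemma \ref{Lm:2.9}.

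\emph{The case $\gamma>0$.} Split further by the sign of $\Delta_q$.
\begin{enumerate}
\item[(a)] If $\Delta_q<0$, Lemma \ref{Lm:2.9}(ii) gives a square root. This is item (i).
\item[(b)] If $\Delta_q>0$, Lemma \ref{Lm:2.9}(iii) says a square root exists iff $\alpha>0$ and $\beta>0$. This is item (ii), and it also accounts for the failure when $\alpha\le0$ or $\beta\le0$.
\item[(c)] If $\Delta_q=0$, use the lemma treating degenerate case (2). For a double root $r_1$ the sign of $\alpha\beta-9\gamma=2r_1(r_1-r_2)^2$ is the sign of $r_1$, and a triple root gives $\alpha\beta-9\gamma=0$. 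This yields items (iii) and (iv).
\end{enumerate}
One check is needed in (c): $\alpha\beta-9\gamma=0$ cannot come from $r_1=0$. That would force $\gamma=r_1^2r_2=0$, contradicting $\gamma>0$. So the value $0$ only arises in the triple-root situation, and the split by the sign of $\alpha\beta-9\gamma$ is well defined.

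\emph{The case $\gamma=0$.} Lemma \ref{eig.-1.symplectic.sqrt.dim6} applies, via the factorization $p_M(t)=(t+1)^6-\alpha t(t+1)^4+\beta t^2(t+1)^2$.
\begin{enumerate}
\item[(a)] If $\beta\neq0$, part (i) gives item (v).
\item[(b)] If $\beta=0$ and $\alpha\ne0$, parts (ii) and (iii) give item (vi). A square root exists only when $\alpha>0$ and the $-1$-block is $N_1(-1,b)^{\diamond2}$. Here $\tr(M_2)=\alpha-6+4-\ldots$ reduces to $\tr(M_2)>-2\iff\alpha>0$, since $\tr M=a_1=\alpha-6$ and the $-1$-block has trace $-4$.
\item[(c)] If $\alpha=\beta=0$, part (iv) gives item (vii).
\end{enumerate}

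\emph{Exclusivity and exhaustiveness.} The sign data $(\gamma,\Delta_q,\alpha\beta-9\gamma,\beta,\alpha)$ partition all of $\Sp(6)$. Each lemma is an if-and-only-if statement on its own piece of this partition. It follows that every $M$ not covered by items (i)--(vii) has no real symplectic square root.

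The main obstacle is bookkeeping rather than a new argument. One point needs care: when $\Delta_q=0$ and $\gamma>0$, a negative double root of $q$ means $M$ has a repeated negative pair $\lambda<-1$. In that situation the pair $(\alpha,\beta)$ alone cannot decide existence, which is why item (iv) carries the extra conjugacy condition $M\sim D(\lambda)^{\diamond2}\diamond M_2$. I would point this out explicitly.
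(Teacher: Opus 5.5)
Your proposal is correct and takes essentially the paper's route: the paper states this theorem as a summary of Lemma~\ref{Lm:2.9}, Lemma~\ref{eig.-1.symplectic.sqrt.dim6} and the $\Delta_q=0$ lemma, with $\gamma=\sigma_3^2\ge0$ as the global necessary condition, and you assemble the same pieces in the same way. One small fix: in the $\beta=0\neq\alpha$ case your trace expression is garbled and should read $\tr(M_2)=a_1-(-4)=\alpha-2$, so that $\tr(M_2)>-2\iff\alpha>0$.
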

\begin{remark}
    For Cases (i)-(iii), we can use \eqref{X} to compute a real symplectic square root of $M$.
\end{remark}

\section{Decomposition of Wonenburger matrices}
\label{sec:3}

In this section, we establish the decomposition results for Wonenburger matrices that will be used in the proof of the main theorems. The section is divided into three parts. In Subsection \ref{subsec:3.1}, we first develop general decomposition criteria according to the eigenvalue structure of the matrix $A$ in the Wonenburger representation. These results allow us to separate different spectral components whenever the Wonenburger relations or a suitable $R^*$-action make such a separation possible. Subsection \ref{subsec:3.2} is devoted to the special case in which $M$ has only the eigenvalue $-1$. This case is singled out because the eigenvalue $-1$ is the most delicate one in the square-root problem: as Theorem C indicates, the existence of a real symplectic square root depends on a detailed normal form rather than on a simple spectral condition. Finally, Subsection \ref{subsec:3.3} studies the case $\sigma(M)={\pm\sqrt{-1}}$. This case is closely related to the preceding one, since if a symplectic matrix $M$ has only the eigenvalue $-1$ and admits a real square root $X$, then every eigenvalue of $X$ must belong to ${\pm\sqrt{-1}}$. 
Thus the decomposition in Subsection \ref{subsec:3.3} provides the necessary preparation for analyzing square roots of matrices whose spectrum is concentrated at $-1$.

\subsection{Decomposition by eigenvalues}
\label{subsec:3.1}

The existence problem for a symplectic square root is highly complex and depends critically on the matrix's eigenvalues. In fact, the solvability of this problem differs drastically across different eigenvalues. It is therefore natural and necessary to begin by decomposing the matrix according to its eigenstructure, which allows us to reduce the problem to simpler components characterized by their eigenvalues.

First, we have

\begin{lemma}\label{Lm:decomposition.via.different.eigenvalues}
    In the Wonenburger matrix $M=\begin{pmatrix}A&B\\C&A^T\end{pmatrix}$, if $A$ has the form
    $$A=\left(\begin{matrix}A_1 & O \\O & A_2\end{matrix}\right),$$
    where $A_1,A_2$ are square matrices with no common eigenvalues, i.e., $\sigma(A_1)\cap\sigma(A_2)=\emptyset$,
    then we must have
    $$
    B=\left(\begin{matrix}B_1 & O\\O & B_2\end{matrix}\right),\quad
    C=\left(\begin{matrix}C_1 & O\\O & C_2\end{matrix}\right),
    $$
    where $B_i,C_i$ have the same size as $A_i$ for $i=1,2$, respectively.
    Therefore, we have
    \begin{equation}
        \left(\begin{matrix}A & B\\C & A^T\end{matrix}\right)
        =\left(\begin{matrix}A_1 & B_1\\C_1 & A_1^T\end{matrix}\right)
        \diamond\left(\begin{matrix}A_2 & B_2\\C_2 & A_2^T\end{matrix}\right),
    \end{equation}
    where ``$\diamond$" is defined in \cite{Lon}.
\end{lemma}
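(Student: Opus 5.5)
The plan is to use the Wonenburger relations directly, by writing $B$ and $C$ in $2\times 2$ block form compatible with the splitting of $A$:
$$B=\begin{pmatrix}B_{11}&B_{12}\\B_{21}&B_{22}\end{pmatrix},\qquad C=\begin{pmatrix}C_{11}&C_{12}\\C_{21}&C_{22}\end{pmatrix},$$
with $B_{21}=B_{12}^T$ and $C_{21}=C_{12}^T$ by symmetry. The only relations from (\ref{constraints.of.Wonenburger.form}) I expect to need are the two commutation-type identities $AB=BA^T$ and $A^TC=CA$.

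First I will expand $AB=BA^T$ blockwise. The off-diagonal blocks give
$$A_1B_{12}=B_{12}A_2^T,\qquad A_2B_{21}=B_{21}A_1^T.$$
These are homogeneous Sylvester equations $XY-ZX=0$ with $\sigma(A_1)\cap\sigma(A_2^T)=\sigma(A_1)\cap\sigma(A_2)=\emptyset$. By the classical uniqueness result for Sylvester equations, as already used in the proof of Lemma \ref{lem:sqrt.under.decompositions}, they have only the zero solution. Hence $B_{12}=0$ and $B_{21}=0$.

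Next I will apply the same argument to $A^TC=CA$. Its off-diagonal blocks read $A_1^TC_{12}=C_{12}A_2$ and $A_2^TC_{21}=C_{21}A_1$, and disjointness of the spectra of $A_1^T$ and $A_2$ gives $C_{12}=C_{21}=0$. This settles the block-diagonal shape of $B$ and $C$.

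Then I will check that each $(A_i,B_i,C_i)$ with $B_i=B_{ii}$, $C_i=C_{ii}$ satisfies the Wonenburger relations. Symmetry is inherited from $B$ and $C$; $A_iB_i=B_iA_i^T$ and $A_i^TC_i=C_iA_i$ are the diagonal blocks of the full identities; and $A_i^2-B_iC_i=I$ is the diagonal block of $A^2-BC=I$, since the product of block-diagonal matrices is block diagonal. Each summand is therefore a Wonenburger matrix, in particular symplectic.

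Finally, the identity $M=M_1\diamond M_2$ follows by comparing entries with the definition of the $\diamond$-sum: the four $n\times n$ blocks of $M$ are block diagonal, so the rows and columns interleave exactly as in $M_1\diamond M_2$. There is no real obstacle in this argument. The only point needing care is to check that the spectral disjointness hypothesis is exactly what the Sylvester uniqueness result requires, using that transposition preserves the spectrum.
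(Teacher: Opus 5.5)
Your proposal is correct and follows essentially the same route as the paper. The paper also block-decomposes $B$ and $C$ and derives the homogeneous Sylvester equations $A_1B_{12}=B_{12}A_2^T$ (and, ``similarly'', the one for $C_{12}$) from $AB=BA^T$ and $A^TC=CA$, concluding that the off-diagonal blocks vanish.

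Your appeal to the general Sylvester uniqueness theorem is in fact a bit more robust than the paper's own argument. The paper multiplies by $(A_1-\lambda I)^{n_1}$, which implicitly assumes $\sigma(A_1)=\{\lambda\}$. In general one should instead use the characteristic polynomial: this gives $0=p_{A_1}(A_1)B_{12}=B_{12}\,p_{A_1}(A_2)^T$, and $p_{A_1}(A_2)$ is invertible by spectral disjointness.
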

\begin{proof}
    Suppose that $A_1\in\mathbb{R}^{n_1\times n_1},A_2\in\mathbb{R}^{n_2\times n_2}$ and
    $$
    B=\left(\begin{matrix}B_{11} & B_{12}\\B_{12}^T & B_{22}\end{matrix}\right),\quad
    C=\left(\begin{matrix}C_{11} & C_{12}\\C_{12}^T & C_{22}\end{matrix}\right),
    $$
    where $B_{11},C_{11}\in\mathbb{R}^{n_1\times n_1}$,
    $B_{12},C_{12}\in\mathbb{R}^{n_1\times n_2}$ and
    $B_{22},C_{22}\in\mathbb{R}^{n_2\times n_2}$.
    Then equation $AB=BA^T$ reads
    $$
    A_1B_{11}=B_{11}A_1^T,\qquad A_2B_{22}=B_{22}A_2^T,
    $$
    and
    \begin{equation}\label{eq.of.B12}
        A_1B_{12}=B_{12}A_2^T.
    \end{equation}

    Now suppose $\lambda\in\sigma(A_1)$, and hence $\lambda\notin\sigma(A_2)$.
    By (\ref{eq.of.B12}), we have
    $$
    (A_1-\lambda I_{n_1})B_{12}=B_{12}(A_2-\lambda I_{n_2})^T.
    $$
    Then we obtain
    $$
    0=(A_1-\lambda I_{n_1})^{n_1}B_{12}=B_{12}((A_2-\lambda I_{n_2})^{n_1})^T,
    $$
    which implies $B_{12}=0$.

    Similarly, we have $C_{12}=0$.
\end{proof}

Then after putting the real matrix $A$ into its real primary decomposition, 
the corresponding Wonenburger matrix splits into a $\diamond$-sum of blocks
associated with individual real eigenvalues or conjugate pairs of non-real eigenvalues.
\begin{corollary}
For any Wonenburger matrix $M=\begin{pmatrix}A&B\\C&A^T\end{pmatrix}$, 
there exists a list of Wonenburger matrices $M_i=\begin{pmatrix}A_i&B_i\\C_i&A_i^T\end{pmatrix},1\le i\le m$, such that
$$
M\sim M_1\diamond M_2\diamond\ldots\diamond M_m,
$$
where, for each $1\leq i\leq m$, the matrix $A_i$ has either a single real eigenvalue or a pair of non-real conjugate eigenvalues. Moreover,
$\sigma(A_i)\cap \sigma(A_j)=\emptyset$ for any $i\neq j$.
\end{corollary}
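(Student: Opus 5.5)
Our plan is to obtain the Corollary by iterating Lemma \ref{Lm:decomposition.via.different.eigenvalues} after putting $A$ into real primary form with an $R_*$-action. The real primary decomposition of $A\in M_n(\mathbb{R})$ gives $R\in{\rm GL}_n(\mathbb{R})$ such that
$$
RAR^{-1}={\rm diag}(A_1,\ldots,A_m).
$$
Here each $A_i$ is the restriction of $A$ to the real generalized eigenspace of one real eigenvalue, or of one conjugate pair $\{\mu,\bar\mu\}$ of non-real eigenvalues. Thus $\sigma(A_i)$ is a single real point or a single conjugate pair, and $\sigma(A_i)\cap\sigma(A_j)=\emptyset$ for $i\ne j$.

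We then replace $M$ by $M_{R_*(A,B,C)}$. We must check that this is a symplectic conjugation of $M$. Indeed, $R_*$ is conjugation by ${\rm diag}(R,(R^T)^{-1})$, and this matrix is symplectic for every invertible $R$. The new triple $(RAR^{-1},RBR^T,(R^T)^{-1}CR^{-1})$ still satisfies the relations \eqref{constraints.of.Wonenburger.form}:
\begin{itemize}
\item symmetry of the $B$- and $C$-blocks is preserved by congruence;
\item $RAR^{-1}\cdot RBR^T=RABR^T=RBA^TR^T=RBR^T\cdot(RAR^{-1})^T$, and the relation for $C$ is checked in the same way;
\item $A^2-BC=I$ is conjugated to itself.
\end{itemize}
Hence the conjugated matrix is again a Wonenburger matrix, and it is symplectically similar to $M$.

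Next we split off one block at a time. Write the new $A$ as ${\rm diag}(A_1,A')$ with $A'={\rm diag}(A_2,\ldots,A_m)$. Then $\sigma(A_1)\cap\sigma(A')=\emptyset$, so Lemma \ref{Lm:decomposition.via.different.eigenvalues} forces $B$ and $C$ to be block diagonal. This gives $M\sim M_1\diamond M'$. Each factor is itself a Wonenburger matrix, because the relations \eqref{constraints.of.Wonenburger.form} hold blockwise once $A$, $B$, $C$ are all block diagonal. Applying the same lemma to $M'$, and inducting on $m$, yields $M\sim M_1\diamond\cdots\diamond M_m$. This uses that the $\diamond$-sum is associative up to the obvious identification of coordinates.

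The only point needing care is that the $\diamond$-sum of the lemma matches the index ordering used in the iteration, i.e., that nested $\diamond$-sums coincide with the $m$-fold block pattern. This follows directly from the definition, since the $\diamond$-sum just interleaves the $A$-, $B$-, $C$-blocks diagonally. No genuine obstacle is expected.
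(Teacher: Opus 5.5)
Your proposal is correct and follows the paper's approach: the paper justifies the corollary in one line, by putting $A$ into real primary form via an $R_*$-action and then invoking Lemma \ref{Lm:decomposition.via.different.eigenvalues}. Your write-up adds the details the paper leaves implicit: that $R_*$ is conjugation by the symplectic matrix ${\rm diag}(R,(R^T)^{-1})$, that the Wonenburger relations are preserved, and that the splitting can be iterated one block at a time.
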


\medskip

Lemma \ref{Lm:decomposition.via.different.eigenvalues} shows that, when the diagonal blocks of $A$ have disjoint spectra, the Wonenburger relations force the off-diagonal blocks of $B$ and $C$ to vanish automatically. This provides a direct $\diamond$-decomposition of $M$. 
In applications, however, the diagonal blocks of $A$ may have overlapping spectra, so that such a conclusion no longer follows from the Sylvester-type equation alone. The next lemma gives a useful replacement under a nondegeneracy assumption on $B$: although the off-diagonal blocks need not vanish initially, they can be eliminated by a suitable $R^*$-action while preserving the block diagonal form of $A$.

\begin{lemma}\label{Lm:decomposition.by.different.sizes}
    In the Wonenburger matrix $M=\begin{pmatrix}A&B\\C&A^T\end{pmatrix}$, if $A$ has the form
    $$A=\left(\begin{matrix}A_1 & O\\O & A_2\end{matrix}\right),$$
    where $A_1,A_2$ are square matrices.
    Moreover, under the same matrix partitioning scheme as $A$,
    $B$ can be expressed as
    $$
    B=\left(\begin{matrix}B_{11} & B_{12}\\B_{21} & B_{22}\end{matrix}\right).
    $$
    If $|B|\cdot|B_{22}|\ne0$ (or $|B|\cdot|B_{11}|\ne0$),
    then there exists a suitable $R\in{\rm GL}_n(\mathbb{R})$, such that
    under the action with $R$, we have
    \begin{equation}
        R_*\left(\begin{matrix}A & B\\C & A^T\end{matrix}\right)
        =\left(\begin{matrix}A_1 & B_1\\C_1 & A_1^T\end{matrix}\right)
        \diamond\left(\begin{matrix}A_2 & B_2\\C_2 & A_2^T\end{matrix}\right),
    \end{equation}
    for some $B_i,C_i,\;i=1,2,$
    where``$\diamond$" is defined in \cite{Lon}.
\end{lemma}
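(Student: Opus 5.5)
Assume $|B|\cdot|B_{22}|\ne0$; the other case is symmetric, with the roles of the two blocks exchanged. The plan is to find $R$ of block unipotent form that commutes with $A$ and block-diagonalizes $B$. Then use the Wonenburger relations to show that $C$ is automatically block diagonal afterwards.

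\emph{Step 1: the choice of $R$.} Take
$$
R=\begin{pmatrix}I&X\\O&I\end{pmatrix}.
$$
Then $RAR^{-1}$ keeps the block diagonal form $\mathrm{diag}(A_1,A_2)$ exactly when $A_1X=XA_2$. For the congruence $RBR^T$, the $(1,2)$ block is $B_{12}+XB_{22}$. It vanishes for the choice
$$
X=-B_{12}B_{22}^{-1},
$$
which is available since $|B_{22}|\ne0$. Because $B$ is symmetric, $B_{21}=B_{12}^T$, so the $(2,1)$ block vanishes too. The new diagonal blocks are $B_1=B_{11}-B_{12}B_{22}^{-1}B_{12}^T$ (a Schur complement) and $B_2=B_{22}$.

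\emph{Step 2: checking $A_1X=XA_2$.} This is the main obstacle. Write out the blocks of $AB=BA^T$:
$$
A_1B_{12}=B_{12}A_2^T,\qquad A_2B_{22}=B_{22}A_2^T.
$$
The second identity gives $B_{22}A_2^TB_{22}^{-1}=A_2$. Hence
$$
A_1X=-A_1B_{12}B_{22}^{-1}=-B_{12}A_2^TB_{22}^{-1}=-B_{12}B_{22}^{-1}\,B_{22}A_2^TB_{22}^{-1}=-B_{12}B_{22}^{-1}A_2=XA_2.
$$
So $RAR^{-1}=A$, and after the $R_*$-action the Wonenburger matrix has $A$ and $B$ both block diagonal. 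Since $|B|\ne0$ is preserved under congruence, the new $B$ is still invertible.

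\emph{Step 3: $C$ becomes block diagonal.} From $A^2-BC=I$ we get $C=B^{-1}(A^2-I)$. The transformed $A$ and $B$ are block diagonal, and $B^{-1}$ inherits this, so $C$ is block diagonal as well. The diagonal blocks $C_i=B_i^{-1}(A_i^2-I)$ are symmetric because $C$ is symmetric.

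\emph{Step 4: conclusion.} All Wonenburger relations hold blockwise, so each block is itself a Wonenburger matrix. The matrix therefore equals the $\diamond$-sum of its two blocks, which is the stated decomposition.

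This is where invertibility of the whole $B$ is used: without $|B|\ne0$, Step 3 would fail.
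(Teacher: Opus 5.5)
Your proof is correct and is essentially the paper's argument. It uses the same unipotent $R=\begin{pmatrix}I&-B_{12}B_{22}^{-1}\\O&I\end{pmatrix}$, the same check $A_1B_{12}B_{22}^{-1}=B_{12}B_{22}^{-1}A_2$ from the blocks of $AB=BA^T$, and the same recovery of the new $C$ as $(RBR^T)^{-1}(A^2-I)$.

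One small correction to your closing remark: $|B|\ne0$ is not actually needed. The $(2,1)$ block of $\tilde B\tilde C=\tilde A^2-I$, with $\tilde B=\mathrm{diag}(B_1,B_{22})$, gives $B_{22}\tilde C_{21}=0$, so $\tilde C_{21}=\tilde C_{12}^T=0$ already follows from $|B_{22}|\ne0$ alone.
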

\begin{proof}
Since $AB=BA^T$, we have
$$
A_1B_{12}=B_{12}A_2^T,\quad
A_2B_{22}=B_{22}A_2^T.
$$
Then we obtain
$$
A_1B_{12}B_{22}^{-1}=B_{12}A_2^TB_{22}^{-1}=B_{12}B_{22}^{-1}A_2.
$$

Now we define
$$
R=\left(\begin{matrix}I & -B_{12}B_{22}^{-1}\\O & I\end{matrix}\right).
$$
Then we have
\begin{eqnarray}
RAR^{-1}&=&\left(\begin{matrix}I & -B_{12}B_{22}^{-1}\\O & I\end{matrix}\right)
\left(\begin{matrix}A_1 & O\\O & A_2\end{matrix}\right)
\left(\begin{matrix}I & B_{12}B_{22}^{-1}\\O & I\end{matrix}\right)
\nonumber\\
&=&\left(\begin{matrix}A_1 & A_1B_{12}B_{22}^{-1}-B_{12}B_{22}^{-1}A_2\\O & A_2\end{matrix}\right)
\nonumber\\
&=&\left(\begin{matrix}A_1 & O\\O & A_2\end{matrix}\right).
\nonumber
\end{eqnarray}
Since $B$ is symmetric, then $B_{11}$ and $B_{22}$ are symmetric
and $B_{21}=B_{12}^T$. Then we obtain
\begin{eqnarray}
RBR^T&=&\left(\begin{matrix}I & -B_{12}B_{22}^{-1}\\O & I\end{matrix}\right)
\left(\begin{matrix}B_{11} & B_{12}\\B_{12}^T & B_{22}\end{matrix}\right)
\left(\begin{matrix}I & O\\-B_{22}^{-1}B_{12}^T & I\end{matrix}\right)
\nonumber\\
&=&\left(\begin{matrix}B_{11}-B_{12}B_{22}^{-1}B_{12}^T & O\\O & B_{22}\end{matrix}\right)
\nonumber
\end{eqnarray}
Finally, we have
\begin{eqnarray}
(R^T)^{-1}CR^{-1}&=&(RBR^T)^{-1}(RBCR^{-1})
\nonumber\\
&=&(RBR^T)^{-1}(R(A^2-I)R^{-1})
\nonumber\\
&=&\left(\begin{matrix}B_{11}-B_{12}B_{22}^{-1}B_{12}^T & O\\O & B_{22}\end{matrix}\right)^{-1}
\left(\begin{matrix}A_1^2-I & O\\O & A_2^2-I\end{matrix}\right)
\nonumber\\
&=&\left(\begin{matrix}[B_{11}-B_{12}B_{22}^{-1}B_{12}^T]^{-1}(A_1^2-I) & O\\O & B_{22}^{-1}(A_2^2-I)\end{matrix}\right),
\end{eqnarray}
where at the last equality, we have $|B_{11}-B_{12}B_{22}^{-1}B_{12}^T|=\frac{|B|}{|B_{22}|}\ne0$.
\end{proof}

\subsection{Decomposition in the case $\sigma(M)=\{-1\}$}
\label{subsec:3.2}

By Lemma 3.2 of \cite{FrM1},
the characteristic polynomial of $M=\begin{pmatrix}A&B\\C&A^T\end{pmatrix}$ is given by
\begin{equation}\label{char.poly}
p_{M}(t)=\det(t^2I_n-2tA+I_n)
=t^np_{-2A}(-t-\frac{1}{t}),
\end{equation}
where $p_{-2A}$ is the characteristic polynomial of $-2A$.
Thus, $\sigma(M)=\{-1\}$ implies $\sigma(A)=\{-1\}$.

For later use, we just need to consider the case when $A$ just have only one Jordan block.
Now, up to a suitable $R^*$-action, we can suppose $A=J_n(-1)$.
\begin{lemma}\label{Lm:one.Jordan.block}
    In Wonenburger matrix $M=\begin{pmatrix}A&B\\C&A^T\end{pmatrix}$, if $A=J_n(-1),n\ge2$, i.e.,
    the Jordan block of eigenvalue $-1$,
    then we must have
    \begin{equation}\label{BC.form.with.one.Jordan.block}
    B=\left(\begin{matrix}
        b_1 & b_2 & \cdots & b_{n-1}& b_n\\
        b_2 & b_3 &\cdots & b_n & 0\\
        \vdots& \vdots& \vdots& \vdots& \vdots\\
        b_{n-1}& b_n & \cdots& 0& 0\\
        b_n& 0& \cdots& 0& 0
    \end{matrix}\right),\quad
    C=\left(\begin{matrix}
        0 & 0 & \cdots & 0& c_n\\
        0 & 0 &\cdots & c_n & c_{n-1}\\
        \vdots& \vdots& \vdots& \vdots& \vdots\\
        0& c_n& \cdots& c_3& c_2\\
        c_n& c_{n-1}& \cdots& c_2& c_1
    \end{matrix}\right),
    \end{equation}
    where $(b_1,b_2,\ldots,b_n)$ and $(c_1,c_2,\ldots,c_n)$
    satisfy
    \begin{equation}\label{eq.of.Cb.n=2}
        C\begin{pmatrix}b_1\\b_2\end{pmatrix}
        =\begin{pmatrix}0\\-2\end{pmatrix},
        \;{\rm if}\;n=2;
    \end{equation}
    and
    \begin{equation}\label{eq.of.Cb.n>2}
        C\begin{pmatrix}b_1\\b_2\\ \vdots\\b_n\end{pmatrix}
        =\begin{pmatrix}0\\-2\\1\\0\\ \vdots\\0\end{pmatrix},
        \;{\rm if}\;n\ge3.
    \end{equation}
\end{lemma}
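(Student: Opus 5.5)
Since $A=J_n(-1)$, write $A=-I+N$ with $N=J_n(0)$ the nilpotent shift, $N e_{i+1}=e_i$. The plan is to read off the shapes of $B$ and $C$ from the two commutation relations in (\ref{constraints.of.Wonenburger.form}). Then we extract the constraint on $(b_i),(c_i)$ from $A^2-BC=I$.

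\textbf{Step 1 (shape of $B$).} The relation $AB=BA^T$ is equivalent to $NB=BN^T$. Left multiplication by $N$ shifts rows up, and right multiplication by $N^T$ shifts columns left. Hence $NB=BN^T$ says $b_{i+1,j}=b_{i,j+1}$ for all admissible indices. At the boundary this gives $b_{i,n}$-type entries paired with zeros: row $n$ of $NB$ is zero, so the last row of $BN^T$ vanishes, i.e.\ $b_{n,j}=0$ for $j\ge2$. Thus $B$ is constant along anti-diagonals (a Hankel matrix), with all anti-diagonals below the main one equal to zero. Symmetry of $B$ is then automatic. This is exactly the stated upper-left Hankel form with parameters $b_1,\dots,b_n$.

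\textbf{Step 2 (shape of $C$).} Similarly, $A^TC=CA$ is equivalent to $N^TC=CN$. Left multiplication by $N^T$ shifts rows down, and right multiplication by $N$ shifts columns right. We again get that $C$ is Hankel. The boundary conditions (first row of $N^TC$ zero) now force the anti-diagonals above the main anti-diagonal to vanish. This yields the lower-right Hankel form with $c_n$ on the main anti-diagonal and $c_1$ in the corner.

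\textbf{Step 3 (the constraint).} Compute $A^2-I=N^2-2N$. Both $B$ and $C$ are determined by a single vector: $B=\sum_k b_k\,(\cdot)$ is a polynomial in $N$ times $K_n$, namely $B=p(N)K_n$ with $p(N)=\sum_{k} b_{k} N^{k-1}$. Likewise $C=K_n q(N)$ for the corresponding polynomial $q$ (using $K_nN K_n=N^T$). Hence $BC$ is a polynomial in $N$ conjugated appropriately, say $BC=p(N)K_n K_n q(N)=p(N)q(N)$ up to the correct ordering. Such a matrix is an upper triangular Toeplitz matrix, determined by its first column or last column. Therefore the matrix identity $BC=N^2-2N$ reduces to equality of one column. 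Concretely, take the first row of $BC$, i.e.\ apply the identity transposed to $e_1$. Since $B$'s first row is $(b_1,\dots,b_n)$ and $B,C$ are symmetric, the identity $(BC)^T=CB$ applied to $e_1$ gives $C(b_1,\dots,b_n)^T=(N^2-2N)^Te_1$. The latter vector is $(0,-2,1,0,\dots,0)^T$ for $n\ge3$ and $(0,-2)^T$ for $n=2$. Conversely, the single column equation implies the full identity because both sides are Toeplitz in $N$.

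\textbf{Main obstacle.} The delicate point is verifying carefully that $BC$ really is a polynomial in $N$, and hence determined by one row or column. I would prove this via the factorization $B=p(N)K_n$ and $C=K_nq(N^{\,})$, checking that $K_n^2=I$. If that bookkeeping is awkward, I would instead note that $BC$ commutes with $A$: indeed $ABC=BA^TC=BCA$, and the commutant of a single Jordan block consists of polynomials in $N$. This makes $BC$ upper triangular Toeplitz, and hence determined by its first row, which is $b^TC=(Cb)^T$.
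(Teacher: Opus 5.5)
Your argument is correct. The paper states this lemma without a written proof, and your direct computation is exactly the verification it leaves implicit: the Hankel shapes come from $NB=BN^T$ and $N^TC=CN$, and the constraint is the first row of $BC=A^2-I=N^2-2N$, i.e.\ $Cb=(N^2-2N)^Te_1$.

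There are two harmless slips. With your convention $B=\sum_k b_kN^{n-k}K_n$, not $\sum_k b_kN^{k-1}K_n$. Also, an upper triangular Toeplitz matrix is determined by its first row or its last column, not its first column. Finally, the Toeplitz structure of $BC$ is needed only for your converse: for the stated necessary condition, multiplying $BC=A^2-I$ on the left by $e_1^T$ already suffices.
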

\begin{remark}
    The equations (\ref{eq.of.Cb.n=2}) and
    (\ref{eq.of.Cb.n>2}) are equivalent to
    \begin{equation}\label{eq.of.Bc.n=2}
        B\begin{pmatrix}c_2\\c_1\end{pmatrix}
        =\begin{pmatrix}-2\\0\end{pmatrix},
        \;{\rm if}\;n=2;
    \end{equation}
    and
    \begin{equation}\label{eq.of.Bc.n>2}
        B\begin{pmatrix}c_n\\c_{n-1}\\ \vdots\\c_1\end{pmatrix}
        =\begin{pmatrix}0\\ \vdots\\0\\1\\-2\\0\end{pmatrix},
        \;{\rm if}\;n\ge3,
    \end{equation}
    respectively.
\end{remark}

Suppose $R\in\mathbb{R}^{n\times n}$ commutes with $A=J_n(-1)$,
i.e., $RJ_n(-1)R^{-1}=J_n(-1)$,
then we must have
\begin{equation}\label{R}
    R=\left(\begin{matrix}
        r_n & r_{n-1} & \cdots & r_2& r_1\\
        0 & r_n &\cdots & r_3 & r_2\\
        \vdots& \vdots& \vdots& \vdots& \vdots\\
        0& 0 & \cdots& r_n& r_{n-1}\\
        0& 0& \cdots& 0& r_n
    \end{matrix}\right),
\end{equation}
for some $(r_1,r_2,\ldots,r_n)\in\mathbb{R}^n$.
Then $R$ is invertible if and only if $r_n\ne0$.
Moreover, we have
$$
RB=BR^T,\qquad, CR=R^TC.
$$

If $A=J_n(-1)$,
under the $R^*$-action in (\ref{R}),
$A$ is invariant, and $B$ becomes
$$
RBR^T=R^2B.
$$
Now we can use this action to simplify the matrices $B,C$ in the normal form $M=\begin{pmatrix}A&B\\C&A^T\end{pmatrix}$.

\begin{lemma}\label{Lm:simple.form.of.one.Jordan.block}
Using the same notations in Lemma \ref{Lm:one.Jordan.block},
    suppose $(b_1,b_2,\ldots,b_n)\ne0$.
    Then there exists an $R\in{\rm GL}_n(\mathbb{R})$ of the form
    (\ref{R}), such that
    $$
    RBR^T=\left(\begin{matrix}
        \tilde{b}_1 & \tilde{b}_2 & \cdots & \tilde{b}_{n-1}& \tilde{b}_n\\
        \tilde{b}_2 & \tilde{b}_3 &\cdots & \tilde{b}_n & 0\\
        \vdots& \vdots& \vdots& \vdots& \vdots\\
        \tilde{b}_{n-1}& \tilde{b}_n & \cdots& 0& 0\\
        \tilde{b}_n& 0& \cdots& 0& 0
    \end{matrix}\right),
    \quad
    (R^T)^{-1}CR^{-1}=\left(\begin{matrix}
        0 & 0 & \cdots & 0& \tilde{c}_n\\
        0 & 0 &\cdots & \tilde{c}_n & \tilde{c}_{n-1}\\
        \vdots& \vdots& \vdots& \vdots& \vdots\\
        0& \tilde{c}_n& \cdots& \tilde{c}_3& \tilde{c}_2\\
        \tilde{c}_n& \tilde{c}_{n-1}& \cdots& \tilde{c}_2& \tilde{c}_1
    \end{matrix}\right),
    $$
    where if $n=2$,
        \begin{equation}
        \begin{pmatrix}\tilde{b}_1\\\tilde{b}_2\end{pmatrix}
        =\begin{pmatrix}0\\\pm1\end{pmatrix},
        \begin{pmatrix}\tilde{c}_1\\\tilde{c}_2\end{pmatrix}
        =\begin{pmatrix}-2\tilde{b}_2\\0\end{pmatrix}
    \end{equation}
    or
    \begin{equation}
        \begin{pmatrix}\tilde{b}_1\\\tilde{b}_2\end{pmatrix}
        =\begin{pmatrix}\pm1\\0\end{pmatrix},
        \begin{pmatrix}\tilde{c}_1\\\tilde{c}_2\end{pmatrix}
        =\begin{pmatrix}0\\-2\tilde{b}_1\end{pmatrix},
    \end{equation}
    and if $n\ge3$,
    \begin{equation}
        \begin{pmatrix}\tilde{b}_1\\\tilde{b}_2\\ \vdots\\\tilde{b}_n\end{pmatrix}
        =\begin{pmatrix}0\\\vdots\\0\\ \pm1\end{pmatrix},
        \begin{pmatrix}\tilde{c}_1\\\tilde{c}_2\\ \vdots\\\tilde{c}_n\end{pmatrix}
        =\begin{pmatrix}0\\\vdots\\0\\ \tilde{b}_n\\-2\tilde{b}_n\\0\end{pmatrix},
    \end{equation}
    or
    \begin{equation}
        \begin{pmatrix}\tilde{b}_1\\\tilde{b}_2\\ \vdots\\\tilde{b}_n\end{pmatrix}
        =\begin{pmatrix}0\\\vdots\\0\\ \pm1\\0\end{pmatrix},
        \begin{pmatrix}\tilde{c}_1\\\tilde{c}_2\\ \vdots\\\tilde{c}_n\end{pmatrix}
        =\begin{pmatrix}0\\\vdots\\0\\ \tilde{b}_{n-1}\\-2\tilde{b}_{n-1}\end{pmatrix}.
    \end{equation}
    
    In particular, we have either
    $$
    {\rm rank}(B)=n,\quad {\rm rank}(C)=n-1,
    $$
    or
    $$
    {\rm rank}(B)=n-1,\quad {\rm rank}(C)=n.
    $$
\end{lemma}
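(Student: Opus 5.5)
The plan is to work entirely inside the one-parameter family of $R$'s in (\ref{R}), which commute with $A=J_n(-1)$ and hence preserve the Hankel/anti-Hankel shapes of Lemma \ref{Lm:one.Jordan.block}. The key algebraic observation is that under this action $B\mapsto RBR^T=R^2B$. Since $R$ is a polynomial in the nilpotent $N=J_n(-1)+I$ with nonzero constant term $r_n$, $R^2$ ranges over all polynomials $u(N)$ with $u(0)>0$. The same holds for $C$: $(R^T)^{-1}CR^{-1}=(R^T)^{-2}C$.

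Identify $B$ with the truncated polynomial. Concretely, $B=\beta(N)K_n$ with $\beta(t)=\sum_{k} b_{k}t^{k-1}$, up to orientation conventions, since $N^{k}K_n$ is the Hankel matrix with ones on the $k$-th shifted antidiagonal. Then $R^2B=u(N)\beta(N)K_n$, so the action multiplies the polynomial $\beta$ (mod $t^n$) by an arbitrary unit $u$ with $u(0)>0$. Writing $\beta(t)=t^{k}\beta_0(t)$ with $\beta_0(0)\ne0$ (possible since $(b_i)\ne0$), choose $u=\beta_0^{-1}\cdot|\beta_0(0)|$ truncated. Then $\tilde\beta=\pm t^{k}$, i.e. $\tilde B$ has a single nonzero antidiagonal with entries $\pm1$.

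Next use the Wonenburger constraint $A^2-BC=I$, i.e. the relations (\ref{eq.of.Cb.n=2})/(\ref{eq.of.Cb.n>2}), to restrict $k$. Since $A^2-I=(N-I)^2-I=N^2-2N$ has rank $n-1$ (kernel spanned by $e_1$) and $A^2-I=BC$, we need $\operatorname{rank}B\ge n-1$. With $\tilde\beta=\pm t^k$, $\operatorname{rank}\tilde B=n-k$, so $k\in\{0,1\}$. These are exactly the two listed alternatives: the entry $\pm1$ sits at $\tilde b_n$ or $\tilde b_{n-1}$ for $n\ge3$ (respectively $\tilde b_2$ or $\tilde b_1$ for $n=2$, matching the indexing of the statement).

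Finally solve for $\tilde C$ from (\ref{eq.of.Cb.n>2}) with the new $\tilde b$. When $\tilde B$ is invertible ($k=0$), $\tilde C=\tilde B^{-1}(N^2-2N)$ is forced. In the rank $n-1$ case, the linear system together with the anti-Hankel shape and symmetry determines $\tilde c$ uniquely. Plugging a single-entry vector $\tilde b$ into $C\tilde b=(0,-2,1,0,\dots)^T$ reads off a column or row of $C$, giving $\tilde c$ with entries $\tilde b_\ast,-2\tilde b_\ast$ in the listed positions. The rank statement then follows by inspection: an anti-Hankel matrix whose last nonzero parameter is $\tilde c_{n-1}$ versus $\tilde c_n$ has rank $n-1$ versus $n$. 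I expect the main obstacle to be the bookkeeping of index conventions, namely matching which antidiagonal corresponds to $b_k$ versus $c_k$ and checking that the $R^{-T}$ action on $C$ is consistent with the unique solution, rather than any conceptual step. I would check this carefully for $n=2,3$ first.
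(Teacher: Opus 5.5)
Your route is essentially the paper's, rewritten in polynomial language. The paper solves the triangular system (\ref{eq.of.R}) for $r_n,r_{n-1},\dots$ one entry at a time, which is exactly multiplying $\beta$ by a unit $u=\rho^2$. Your bound $\operatorname{rank}B\ge n-1$ replaces the paper's relations $b_nc_n=0$, $b_{n-1}c_n+b_nc_{n-1}=-2$. Two corrections are needed in the normalization step. First, with the upper shift $N$ one has $B=\beta(N)K_n$ with $\beta(t)=\sum_m b_mt^{n-m}$, so $b_n$ is the constant term; your final conclusion implicitly uses this. Second, the unit must be $u=\operatorname{sign}(\beta_0(0))\,\beta_0^{-1}$. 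Your $u=|\beta_0(0)|\,\beta_0^{-1}$ has $u(0)=\operatorname{sign}(\beta_0(0))$, which may be negative and then is not of the form $R^2$. It also produces $|\beta_0(0)|\,t^k$ rather than $\pm t^k$.

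The genuine gap is your claim that in the rank $n-1$ case the relations determine $\tilde c$ uniquely; they do not. Write $C=K_n\gamma(N)$ with $\gamma(t)=\sum_m c_mt^{n-m}$. Then $BC=A^2-I$ reads $\tilde\beta\tilde\gamma\equiv t^2-2t\pmod{t^n}$. With $\tilde\beta=st$ this fixes $\tilde\gamma$ only modulo $t^{n-1}$: you get $\tilde c_2,\dots,\tilde c_n$, but the coefficient of $t^{n-1}$, namely $\tilde c_1$, is unconstrained. It sits only in the last column of $\tilde C$, so it never enters $\tilde C\tilde b$ when $\tilde b=se_{n-1}$. Already for $n=2$, $B=\operatorname{diag}(s,0)$ and $C=\begin{pmatrix}0&-2s\\-2s&c_1\end{pmatrix}$ satisfy all Wonenburger relations for every $c_1$.

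So the asserted $\tilde c_1=0$ needs one more normalization, using freedom you have not yet spent. Take $u=1+at^{n-1}$, i.e. $r_n=1$, $r_1=a/2$ and all other $r_i=0$. This fixes $\tilde\beta=st$ because $t^n\equiv0$. It acts on $C$ by $\tilde\gamma\mapsto u^{-1}\tilde\gamma$, i.e. $\tilde c_1\mapsto\tilde c_1-a\tilde c_n$. Since $\tilde c_n=-2s\neq0$, the choice $a=\tilde c_1/\tilde c_n$ kills it. The paper's one-line appeal to (\ref{eq.of.Bc.n>2}) passes over the same point: in its case $b_n=0$ the parameter $r_1$ is likewise left unused. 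The rank statements are unaffected, since they depend only on the orders of $\beta$ and $\gamma$ at $t=0$.
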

\begin{proof}
    By (\ref{eq.of.Cb.n=2}) or (\ref{eq.of.Cb.n>2}), we have $b_nc_n=0$ and $b_{n-1}c_n+b_nc_{n-1}=-2$,
    which implies $b_n=0$ or $c_n=0$.
    If $b_n=0$, we must have $b_{n-1}\ne0$ and $c_n\ne0$.

    On the other hand, by direct computation, we obtain
\begin{equation}\label{eq.of.R}
        \begin{pmatrix}\tilde{b}_1\\\tilde{b}_2\\ \vdots\\\tilde{b}_n\end{pmatrix}
        =
        \left(\begin{matrix}
        r_n^2 & r_{n-1}r_n+r_nr_{n-1} & \cdots & \sum_{i=2}^{n}r_ir_{n+2-i}& \sum_{i=1}^{n}r_ir_{n+1-i}\\
        0 & r_n^2 &\cdots & \sum_{i=3}^{n}r_ir_{n+3-i} & \sum_{i=2}^{n}r_ir_{n+2-i}\\
        \vdots& \vdots& \vdots& \vdots& \vdots\\
        0& 0 & \cdots& r_n^2& r_{n-1}r_n+r_nr_{n-1}\\
        0& 0& \cdots& 0& r_n^2
    \end{matrix}\right)
        \begin{pmatrix}b_1\\b_2\\\vdots\\b_n\end{pmatrix}.
    \end{equation}
If $b_n\ne0$, in the last equation of (\ref{eq.of.R}),
letting $r_n=\frac{1}{\sqrt{|b_n|}}\ne0$,
we have $\tilde{b}_n={\rm sign}(b_n)$.
By the second last equation of (\ref{eq.of.R}),
$r_{n-1}=-\frac{r_nb_{n-1}}{b_n}$ guarantees $\tilde{b}_{n-1}=0$.
By induction, there also exists $r_1,\ldots,r_{n-2}$ such that
$\tilde{b}_1=\ldots=\tilde{b}_{n-2}=0$.

If $b_n=0$, then $b_{n-1}\ne0$.
Then the remaining argument is similar to the case $b_n\ne0$.

Note that $(\tilde{c}_1,\ldots,\tilde{c}_n)$ is obtained
by (\ref{eq.of.Bc.n=2}) or (\ref{eq.of.Bc.n>2}).
\end{proof}

\subsection{Decomposition in the case $\sigma(M)=\{\pm\sqrt{-1}\}$}
\label{subsec:3.3}

By \eqref{char.poly}, $\sigma(M)=\{\pm\sqrt{-1}\}$ implies $\sigma(A)=\{0\}$.
Therefore, up to a suitable $R^*$-action, we can suppose
\begin{equation}\label{A.Jordan.form}
    A={\rm diag}(\underbrace{J_{k_1}(0),\ldots,J_{k_1}(0)}_{n_1},\underbrace{J_{k_2}(0),\ldots,J_{k_2}(0)}_{n_2},\ldots,\underbrace{J_{k_p}(0),\ldots,J_{k_p}(0)}_{n_p})
\end{equation} 
where $k_1>k_2>\ldots>k_p$.
Since $BC=A^2-I$ and $A$ is nilpotent, the matrix $A^2-I$ is invertible. Therefore both $B$ and $C$ are invertible.

For simplicity, we first consider the case when $A$ just have only one Jordan block, i.e., $A=J_n(0)$.
By a similar argument of Lemma \ref{Lm:one.Jordan.block}, we have
\begin{lemma}\label{Lm:one.Jordan.block.of.eig.0}
    In Wonenburger matrix $M=\begin{pmatrix}A&B\\C&A^T\end{pmatrix}$, if $A=J_n(0),n\ge2$, i.e.,
    the Jordan block of eigenvalue $0$,
    then we must have
    \begin{equation}\label{BC.form.with.one.Jordan.block.of.eig.0}
    B=\left(\begin{matrix}
        b_1 & b_2 & \cdots & b_{n-1}& b_n\\
        b_2 & b_3 &\cdots & b_n & 0\\
        \vdots& \vdots& \vdots& \vdots& \vdots\\
        b_{n-1}& b_n & \cdots& 0& 0\\
        b_n& 0& \cdots& 0& 0
    \end{matrix}\right),\quad
    C=\left(\begin{matrix}
        0 & 0 & \cdots & 0& c_n\\
        0 & 0 &\cdots & c_n & c_{n-1}\\
        \vdots& \vdots& \vdots& \vdots& \vdots\\
        0& c_n& \cdots& c_3& c_2\\
        c_n& c_{n-1}& \cdots& c_2& c_1
    \end{matrix}\right),
    \end{equation}
    where $(b_1,b_2,\ldots,b_n)$ and $(c_1,c_2,\ldots,c_n)$
    satisfy
    \begin{equation}\label{eq.of.Cb.n=2.of.eig.0}
        C\begin{pmatrix}b_1\\b_2\end{pmatrix}
        =\begin{pmatrix}-1\\0\end{pmatrix},
        \;{\rm if}\;n=2;
    \end{equation}
    and
    \begin{equation}\label{eq.of.Cb.n>2.of.eig.0}
        C\begin{pmatrix}b_1\\b_2\\ \vdots\\b_n\end{pmatrix}
        =\begin{pmatrix}-1\\0\\1\\0\\ \vdots\\0\end{pmatrix},
        \;{\rm if}\;n\ge3.
    \end{equation}
    In particular, we have
    $$
    {\rm rank}(B)={\rm rank}(C)=n.
    $$
\end{lemma}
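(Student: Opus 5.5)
The statement is Lemma \ref{Lm:one.Jordan.block.of.eig.0}. I would prove it by repeating the argument of Lemma \ref{Lm:one.Jordan.block}, with $J_n(-1)$ replaced by $A=J_n(0)=N$, the nilpotent shift with ones on the superdiagonal. The proof has three steps: determine the shape of $B$, determine the shape of $C$, and then extract the linear relation and the rank statement from $BC=A^2-I$.

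\textbf{Shape of $B$.} First I would use $AB=BA^T$ together with $B=B^T$. The matrix $NB$ shifts the rows of $B$ up by one, so $(NB)_{ij}=b_{i+1,j}$. Similarly $(BN^T)_{ij}=b_{i,j+1}$, with the convention that entries with index $n+1$ vanish. The relation $NB=BN^T$ therefore says $b_{i+1,j}=b_{i,j+1}$. Hence $B$ is constant along anti-diagonals, i.e. $B$ is a Hankel matrix $b_{ij}=\beta_{i+j}$. The boundary conditions $b_{n+1,j}=0$ then force $b_{n,j+1}=0$, which gives $\beta_k=0$ for $k\ge n+2$. So $B$ has the upper-left triangular Hankel form in \eqref{BC.form.with.one.Jordan.block.of.eig.0}.

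\textbf{Shape of $C$.} Next I would treat $A^TC=CA$, i.e. $N^TC=CN$, in the same way. Here $(N^TC)_{ij}=c_{i-1,j}$ and $(CN)_{ij}=c_{i,j-1}$, with zero entries at index $0$. So $C$ is again Hankel, and the boundary conditions kill the anti-diagonals with $i+j\le n$. This gives the lower-right triangular Hankel form stated for $C$.

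\textbf{The relation and the rank.} The remaining Wonenburger relation is $BC=A^2-I=N^2-I$. I would compare only the first column of $BC$ with the first column of $N^2-I$, which is $(-1,0,\dots,0)^T$. Since both matrices are Hankel, $CB$ is the relevant product to organise the computation. The cleanest route is to use the symmetry $(BC)^T=CB$ and then read off the last column, or equivalently to write the product against the vector $(b_1,\dots,b_n)^T$.
\begin{itemize}
\item For $n\ge3$: $A^2-I$ has $-1$ on the diagonal and $1$ on the second superdiagonal. Extracting the appropriate column produces the vector $(-1,0,1,0,\dots,0)^T$ in \eqref{eq.of.Cb.n>2.of.eig.0}.
\item For $n=2$: $N^2=0$, so the target vector is $(-1,0)^T$, which is \eqref{eq.of.Cb.n=2.of.eig.0}.
\end{itemize}
I expect the main obstacle to be exactly this step: matching the index conventions, so that the column of $B$ or the reversed vector of the $c$'s is paired correctly. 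To fix the conventions I would check against the $n=2$ and $n=3$ cases computed explicitly earlier in Subcases (2) and (3) of Lemma \ref{eig.-1.symplectic.sqrt.dim6}.

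Finally, the rank claim. Since $A$ is nilpotent, $\det(A^2-I)=(-1)^n\ne0$, so $BC$ is invertible and both $B$ and $C$ have rank $n$. This can also be seen directly from the triangular Hankel shapes: $\det B=\pm b_n^n$ and $\det C=\pm c_n^n$. Looking at the $(n,1)$ entry of $BC=N^2-I$ gives $b_nc_n=-1$, so both $b_n$ and $c_n$ are nonzero. This contrasts with Lemma \ref{Lm:simple.form.of.one.Jordan.block}, where the corresponding product is $b_nc_n=0$.
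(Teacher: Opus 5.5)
Your overall route matches the paper's. The paper proves this lemma only by ``a similar argument'' to Lemma \ref{Lm:one.Jordan.block}, meaning entrywise use of $AB=BA^T$, $A^TC=CA$ and $BC=A^2-I$, and it gets the rank from the invertibility of $A^2-I$ for nilpotent $A$. Your derivations of the Hankel shapes of $B$ and $C$ are correct, and so is your rank argument via $\det(A^2-I)=(-1)^n$.

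However, step 3 as written does not produce the displayed identity. The first column of $BC$ is $c_n(b_n,0,\ldots,0)^T$. The last column of $CB$ is $b_n(0,\ldots,0,c_n)^T$. So both comparisons you propose yield only $b_nc_n=-1$, and neither is equivalent to pairing $C$ with $(b_1,\ldots,b_n)^T$. Likewise, no column of $A^2-I$ equals $(-1,0,1,0,\ldots,0)^T$; that vector is the first \emph{row} of $N^2-I$.

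The missing one-line identification is that $(b_1,\ldots,b_n)^T$ is the first column $Be_1$ of $B$, where $e_1$ is the first standard basis vector. Hence
\[
C\,(b_1,\ldots,b_n)^T=CBe_1=(BC)^Te_1=\bigl((N^T)^2-I\bigr)e_1=-e_1+(N^T)^2e_1 .
\]
For $n\ge3$ we have $(N^T)^2e_1=e_3$, while for $n=2$ we have $(N^T)^2=0$. This gives exactly \eqref{eq.of.Cb.n>2.of.eig.0} and \eqref{eq.of.Cb.n=2.of.eig.0}.

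There is also a smaller slip. The $(n,1)$ entry of $BC$ is $0$, because row $n$ of $B$ is $(b_n,0,\ldots,0)$ and column $1$ of $C$ is $(0,\ldots,0,c_n)^T$. The relation $b_nc_n=-1$ comes instead from the $(1,1)$ entry; in fact every diagonal entry of $BC$ equals $b_nc_n$. This does not affect the rank claim, which already follows from $\det(A^2-I)\ne0$.
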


Now we study the general case in which $A$ has multiple Jordan blocks, as shown in \eqref{A.Jordan.form}.
Let $m=n_1+\cdots+n_p$ be the total number of Jordan blocks of $A$, and write
\[
B=\left(\begin{matrix}
        B_{11} & B_{12} & \cdots & B_{1m}\\
        B_{21} & B_{22} &\cdots & B_{2m}\\
        \vdots& \vdots& \vdots& \vdots\\
        B_{m1}& B_{m2} & \cdots& B_{mm}
    \end{matrix}\right),
\]
according to this Jordan block decomposition. If the $i$-th and $j$-th Jordan blocks have sizes $\kappa_i$ and $\kappa_j$, respectively, then the equation
$AB=BA^T$
gives
\[
J_{\kappa_i}(0)B_{ij}=B_{ij}J_{\kappa_j}(0)^T.
\]
Thus
$B_{ij}$ is a rectangular Hankel-type matrix with $\min\{\kappa_i,\kappa_j\}$ free parameters, and the remaining entries are forced to be zero.
That is,
\begin{equation}
B_{ij}=
    \left(\begin{matrix}
        b_1^{(ij)} & b_2^{(ij)} & \cdots & b_{k-1}^{(ij)}& b_k^{(ij)}\\
        b_2^{(ij)} & b_3^{(ij)} &\cdots & b_k^{(ij)} & 0\\
        \vdots& \vdots& \vdots& \vdots& \vdots\\
        b_{k-1}^{(ij)}& b_k^{(ij)} & \cdots& 0& 0\\
        b_k^{(ij)}& 0& \cdots& 0& 0\\
        0& 0& \cdots& 0& 0\\
        \vdots& \vdots& \vdots& \vdots& \vdots\\
        0& 0& \cdots& 0& 0
    \end{matrix}\right),\quad\text{for}\;\kappa_i\le\kappa_j,
\end{equation}
or
\begin{equation}
B_{ij}=
    \left(\begin{matrix}
        b_1^{(ij)} & b_2^{(ij)} & \cdots & b_{k-1}^{(ij)}& b_k^{(ij)}& 0& \cdots& 0\\
        b_2^{(ij)} & b_3^{(ij)} &\cdots & b_k^{(ij)} & 0& 0& \cdots& 0\\
        \vdots& \vdots& \vdots& \vdots& \vdots& 0& \cdots& 0\\
        b_{k-1}^{(ij)}& b_k^{(ij)} & \cdots& 0& 0& 0& \cdots& 0\\
        b_k^{(ij)}& 0& \cdots& 0& 0& 0& \cdots& 0
    \end{matrix}\right),\quad{\text{for}\;}\kappa_i>\kappa_j,
\end{equation}
where $k=\min\{\kappa_i,\kappa_j\}$.
In particular, if $i$ and $j$ belong to the same group of Jordan blocks of size $k$, then the last anti-diagonal coefficient $b_k^{(ij)}$ determines the leading contribution to the determinant.

For $1\leq \alpha\leq p$, let $B_\alpha$ denote the submatrix of $B$ corresponding to all Jordan blocks of size $k_\alpha$, and define
\[
\Gamma_\alpha=
\begin{pmatrix}
        b_{k_\alpha}^{(m_{\alpha-1}+1,m_{\alpha-1}+1)} & b_{k_\alpha}^{(m_{\alpha-1}+1,m_{\alpha-1}+2)} & \cdots & b_{k_\alpha}^{(m_{\alpha-1}+1,m_{\alpha-1}+n_\alpha)}\\
        b_{k_\alpha}^{(m_{\alpha-1}+2,m_{\alpha-1}+1)} & b_{k_\alpha}^{(m_{\alpha-1}+2,m_{\alpha-1}+2)} &\cdots & b_{k_\alpha}^{(m_{\alpha-1}+2,m_{\alpha-1}+n_\alpha)}\\
        \vdots& \vdots& \vdots& \vdots\\
        b_{k_\alpha}^{(m_{\alpha-1}+n_\alpha,m_{\alpha-1}+1)}& b_{k_\alpha}^{(m_{\alpha-1}+n_\alpha,m_{\alpha-1}+2)} & \cdots& b_{k_\alpha}^{(m_{\alpha-1}+n_\alpha,m_{\alpha-1}+n_\alpha)}
    \end{pmatrix},
\qquad
m_\alpha=n_1+\cdots+n_\alpha.
\]
In short, we write $\Gamma_\alpha=(b_{k_\alpha}^{(i,j)}),\;m_{\alpha-1}<i,j\le m_\alpha$.

Let
\begin{equation}
    K={\rm diag}(\underbrace{K_{k_1},\ldots,K_{k_1}}_{n_1},\underbrace{K_{k_2},\ldots,K_{k_2}}_{n_2},\ldots,\underbrace{K_{k_p},\ldots,K_{k_p}}_{n_p})
    \nonumber
\end{equation} 
where $K_{k_i}$ is the $k_i\times k_i$ anti-diagonal matrix
for $1\le i\le p$,
which is given by \eqref{K}. 
Since
\[
J_{k_i}(0)^T K_{k_i}=K_{k_i}J_{k_i}(0),
\]
we have $A^T K=KA$.
It follows from $AB=BA^T$ that
\[
A(BK)=BA^T K=BKA.
\]
Thus $T:=BK$
commutes with the Jordan matrix $A$. 
By Lemma 3.5 and Theorem 5.1 of \cite{FMM}, we have
\begin{eqnarray}
\det B_\alpha K_\alpha&=&(\det\Gamma_\alpha)^{k_\alpha},\;1\le\alpha\le p,
\nonumber\\
\det BK&=&\prod_{\alpha=1}^p\det B_\alpha K_\alpha.
\nonumber
\end{eqnarray}
Moreover, since $\det K=\pm1$ and $\det K_\alpha=\pm1$ for $1\le\alpha\le p$, 
we have
\begin{eqnarray}
\det B_\alpha&=&\pm(\det\Gamma_\alpha)^{k_\alpha},
\\
\det B&=&\pm\prod_{\alpha=1}^p\det B_\alpha=\pm\prod_{\alpha=1}^p \bigl(\det \Gamma_\alpha\bigr)^{k_\alpha}.
\end{eqnarray}


Since $B$ is invertible, each $\Gamma_\alpha$ is invertible. Equivalently, each diagonal group $B_\alpha$ is nondegenerate in the sense required for applying Lemma \ref{Lm:decomposition.by.different.sizes}.
Applying Lemma \ref{Lm:decomposition.by.different.sizes} successively to the decomposition corresponding to the groups of dimensions $k_1n_1,k_2n_2,\ldots,k_pn_p$, we separate the groups of Jordan blocks with different sizes.
Thus, there exists $R\in{\rm GL}_n(\mathbb{R})$ such that
\[
R_*\begin{pmatrix}A&B\\C&A^T\end{pmatrix}=
\begin{pmatrix}
A_1 & \overline B_1\\
\overline C_1 & A_1^T
\end{pmatrix}
\diamond
\begin{pmatrix}
A_2 & \overline B_2\\
\overline C_2 & A_2^T
\end{pmatrix}
\diamond\cdots\diamond
\begin{pmatrix}
A_p & \overline B_p\\
\overline C_p & A_p^T
\end{pmatrix},
\]
where
$A_\alpha=\operatorname{diag}(\underbrace{J_{k_\alpha}(0),\ldots,J_{k_\alpha}(0)}_{n_\alpha})$ for $1\leq \alpha\leq p$.

Therefore, in order to complete the decomposition, it remains to consider the case in which $A$ consists of several Jordan blocks all having the same size. This is the content of the next lemma.

\begin{lemma}
If $A={\rm diag}(\underbrace{J_{k}(0),\ldots,J_{k}(0)}_{q})$,
then there exists an invertible matrix $R=\begin{pmatrix}R_{11}& \ldots& R_{1q}\\ \vdots& &\vdots\\R_{q1}& \ldots&R_{qq}\end{pmatrix}$,  
partitioned in the same way as $A$, such that
\begin{eqnarray}
RBR^T
&=&{\rm diag}(\widetilde{B}_{1},\widetilde{B}_{2},\ldots,\widetilde{B}_{q}),\label{tilde.B}\\
(R^T)^{-1}CR^{-1}
&=&{\rm diag}(\widetilde{C}_{1},\widetilde{C}_{2},\ldots,\widetilde{C}_{q}).\label{tilde.C}
\end{eqnarray}
Therefore, under the action by $R$, we have
\begin{equation}
    R_*\begin{pmatrix}A&B\\C&A^T\end{pmatrix}
    =\begin{pmatrix}J_k(0)&\tilde{B}_{1}\\\tilde{C}_{1}&J_k(0)^T\end{pmatrix}
    \diamond\ldots\diamond\begin{pmatrix}J_k(0)&\tilde{B}_{q}\\\tilde{C}_{q}&J_k(0)^T\end{pmatrix}.
\end{equation}
\end{lemma}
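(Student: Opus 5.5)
The plan is to block-diagonalize $B$ by a change of basis that commutes with $A$. This reduces the statement to the classical fact that a nondegenerate symmetric (or Hermitian) form over the centralizer algebra can be diagonalized.

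First, describe the centralizer. Any $R$ with $RA=AR$ has each block $R_{ij}$ upper triangular Toeplitz, that is, a polynomial in $N:=J_k(0)$. So the centralizer of $A$ is $M_q(\mathcal{R})$ with $\mathcal{R}=\mathbb{R}[N]/(N^k)$, a local ring with residue field $\mathbb{R}$. Conjugation by such an $R$ keeps $A$ fixed, and $R_*$ acts on $B$ by $B\mapsto RBR^T$. Let $K=\mathrm{diag}(K_k,\ldots,K_k)$. As noted before the lemma, $T=BK$ commutes with $A$, so each block $T_{ij}=B_{ij}K_k$ is an element $t_{ij}\in\mathcal{R}$. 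Symmetry of $B$ together with $K_kN^TK_k=N$ translates into $t_{ij}=t_{ji}$: the elements of $\mathcal{R}$ are fixed by the anti-involution $X\mapsto K_kX^TK_k$, which is trivial on the commutative ring $\mathcal{R}$.

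Next, translate the action into this language. We have $RBR^T K = R(BK)(K R^T K)$. Since $K_kR_{ij}^TK_k=R_{ij}$ for Toeplitz blocks, $KR^TK$ is the "transpose over $\mathcal{R}$" of $R$, that is, the matrix $R^{t}$ with entries $(R^{t})_{ij}=R_{ji}$. So the action becomes $T\mapsto RTR^{t}$ on symmetric matrices $T\in M_q(\mathcal{R})$. The lemma is then equivalent to diagonalizing a symmetric $q\times q$ matrix over $\mathcal{R}$ by congruence. We know $B$ is invertible, and its reduction mod $N$ is $\Gamma$ (up to reindexing), which is invertible by the determinant formula from \cite{FMM}. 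Hence $T$ is invertible over $\mathcal{R}$.

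The core step is the diagonalization itself, carried out by induction on $q$ as in Gram–Schmidt. Since $\Gamma=T \bmod N$ is a nondegenerate real symmetric matrix, it has some $e$ with $e^T\Gamma e\neq0$. After a real change of basis (scalar blocks of $R$, which lie in the centralizer), we may assume $t_{11}\bmod N\neq0$, so $t_{11}$ is a unit in $\mathcal{R}$. Then $R=\begin{pmatrix}1&0\\ -t_{21}t_{11}^{-1}&I\end{pmatrix}$ (entries in $\mathcal{R}$) clears the first row and column, exactly as in Lemma \ref{Lm:decomposition.by.different.sizes} with $B_{11}$ now invertible. The complementary block remains symmetric and invertible, so induction finishes the argument. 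The step I expect to be the main obstacle is this one: ensuring that a unit diagonal entry exists. That requires working modulo $N$ and using that $\mathbb{R}$ has characteristic $\neq 2$; if all diagonal residues vanish, one first replaces $e_1$ by $e_1+e_2$ where $\Gamma_{12}\neq0$.

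Finally, once $RBR^T=\mathrm{diag}(\widetilde B_i)$ with $R$ commuting with $A$, handle $C$ as in the proof of Lemma \ref{Lm:decomposition.by.different.sizes}. The blocks $\widetilde B_i$ are invertible, and $A^2-I$ is block diagonal, so
$(R^T)^{-1}CR^{-1}=(RBR^T)^{-1}R(A^2-I)R^{-1}=(RBR^T)^{-1}(A^2-I)$
is block diagonal. This gives \eqref{tilde.C} and the $\diamond$-decomposition.
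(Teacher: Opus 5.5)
Your argument is correct, but it takes a different route from the paper. The paper uses the same centralizer (each $R_{ij}$ upper triangular Toeplitz), but it organizes everything by powers of $N$. With $R_i$, $B_i$ the $q\times q$ matrices of $i$-th coefficients, it first diagonalizes the leading coefficient $B_k$ (your residue $\Gamma$) by a real congruence $R_k$. It then chooses $R_{k-1},\dots,R_1$ one order at a time, solving a linear equation $X\widetilde B_k+\widetilde B_kX^T=-S_l$ at each step so that $\widetilde B_{k-1}=\dots=\widetilde B_1=0$. This is a Hensel-type lifting rather than your row-by-row symmetric Gaussian elimination over $\mathcal{R}=\mathbb{R}[N]/(N^k)$.

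The paper's route gives a stronger normal form at once: $RBR^T=\mathrm{diag}(\tilde b_k^{(11)}K_k,\dots,\tilde b_k^{(qq)}K_k)$ with constant coefficients, so the later normalization to $\pm K_k$ in Theorem \ref{Thm:decomposition.of.eig.i} is just a rescaling. The cost is index-heavy bookkeeping, plus an auxiliary rescaling of $\widetilde B_k$ to make those linear equations uniquely solvable. That rescaling is not actually needed, since $X=-\tfrac12 S_l\widetilde B_k^{-1}$ always solves them.

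Your reformulation makes symmetry, invertibility and the preservation of $A$ automatic. It reduces the lemma to the classical diagonalization of a nondegenerate symmetric form over a local ring in which $2$ is a unit. It only produces unit diagonal entries $\tilde t_{ii}\in\mathcal{R}$ rather than constants. You can still recover the paper's stronger form by one further diagonal change of basis: multiply the $i$-th basis vector by $u_i^{-1/2}$, where $u_i\in 1+N\mathcal{R}$ is $\tilde t_{ii}$ divided by its residue and $u_i^{-1/2}$ is given by the binomial series in $\mathcal{R}$.
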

\begin{proof}
We first introduce some notation.
With the same partitioning as $A$, suppose
$$
B=\left(\begin{matrix}
        B_{11} & B_{12} & \cdots & B_{1q}\\
        B_{12}^T & B_{22} &\cdots & B_{2q}\\
        \vdots& \vdots& \vdots& \vdots\\
        B_{1q}^T& B_{2m}^T & \cdots& B_{qq}
    \end{matrix}\right),
\quad
C=\left(\begin{matrix}
        C_{11} & C_{12} & \cdots & C_{1q}\\
        C_{12}^T & C_{22} &\cdots & C_{2q}\\
        \vdots& \vdots& \vdots& \vdots\\
        C_{1q}^T& C_{2m}^T & \cdots& C_{qq}
    \end{matrix}\right).
$$
Then by a similar argument to the one in the proof of Lemma \ref{Lm:one.Jordan.block.of.eig.0},
we must have
\begin{equation}\label{B_ij.C_ij.form}
B_{ij}=
    \left(\begin{matrix}
        b_1^{(ij)} & b_2^{(ij)} & \cdots & b_{k-1}^{(ij)}& b_k^{(ij)}\\
        b_2^{(ij)} & b_3^{(ij)} &\cdots & b_k^{(ij)} & 0\\
        \vdots& \vdots& \vdots& \vdots& \vdots\\
        b_{k-1}^{(ij)}& b_k^{(ij)} & \cdots& 0& 0\\
        b_k^{(ij)}& 0& \cdots& 0& 0
    \end{matrix}\right),\;\;
C_{ij}=\left(\begin{matrix}
        0 & 0 & \cdots & 0& c_k^{(ij)}\\
        0 & 0 &\cdots & c_k^{(ij)} & c_{k-1}^{(ij)}\\
        \vdots& \vdots& \vdots& \vdots& \vdots\\
        0& c_k^{(ij)}& \cdots& c_3^{(ij)}& c_2^{(ij)}\\
        c_k^{(ij)}& c_{k-1}^{(ij)}& \cdots& c_2^{(ij)}& c_1^{(ij)}
    \end{matrix}\right),\quad
    1\le i\le j\le q,
\end{equation}
for some $(b_1^{(ij)},b_2^{(ij)},\ldots,b_k^{(ij)})$ and $(c_1^{(ij)},c_2^{(ij)},\ldots,c_k^{(ij)})$.

Suppose $R=\begin{pmatrix}R_{11}& \ldots& R_{1q}\\ \vdots& &\vdots\\R_{q1}& \ldots&R_{qq}\end{pmatrix}\in\mathbb{R}^{qk\times qk}$ commutes with $A$,
i.e., $RAR^{-1}=A$,
then we must have
\begin{equation}\label{R_ij}
    R_{ij}=\left(\begin{matrix}
        r_k^{(ij)} & r_{k-1}^{(ij)} & \cdots & r_2^{(ij)}& r_1^{(ij)}\\
        0 & r_k^{(ij)} &\cdots & r_3^{(ij)} & r_2^{(ij)}\\
        \vdots& \vdots& \vdots& \vdots& \vdots\\
        0& 0 & \cdots& r_k^{(ij)}& r_{k-1}^{(ij)}\\
        0& 0& \cdots& 0& r_k^{(ij)}
    \end{matrix}\right),
    \quad 1\le i, j\le q,
\end{equation}
for some $(r_1^{(ij)},r_2^{(ij)},\ldots,r_k^{(ij)})\in\mathbb{R}^k$.
Moreover, we have
$$
R_{ij}B_{i'j'}=B_{i'j'}R_{ij}^T,\quad, C_{i'j'}R_{ij}=R_{ij}^TC_{i'j'},\quad
R_{ij}R_{i'j'}=R_{i'j'}R_{ij},
$$
for some $1\le i, j\le q$ and $1\le i'\le j'\le q$.

Let
$$
R_i=\begin{pmatrix}
    r_i^{(11)}&\ldots& r_i^{(1q)}\\
    \vdots& &\vdots\\
    r_i^{(q1)}&\ldots& r_i^{(qq)}
\end{pmatrix},
\quad 1\le i\le k.
$$
Then by Lemma 3.5 of \cite{FMM}, we have
$$
\det(R)
=\left|
\begin{matrix}
    r_k^{(11)}&\ldots& r_k^{(1q)}\\
    \vdots& &\vdots\\
    r_k^{(q1)}&\ldots& r_k^{(qq)}
\end{matrix}
\right|^k=|R_k|^k.
$$
Hence $R$ is invertible if and only if $R_k$ is invertible.

Now we give the proofs.
Partition $\widetilde{B}=RBR^T$ in the same way as $B$, and denote the
block in the $i$-th row and $j$-th column by $\widetilde{B}_{ij}$ for $1\le i.j\le q$.
It is sufficient to prove $\widetilde{B}_{ij}=0$ for all $1\le i< j\le q$ for some suitable matrix $R$.
In fact, $(R^T)^{-1}CR^{-1}={\widetilde{B}}^{-1}((RAR^{-1})^2-I)$ implies that $(R^T)^{-1}CR^{-1}$ is also block diagonal.

A direct computation yields
$$
\widetilde{B}_{ij}=\sum_{s,t=1}^{q}R_{is}B_{st}R_{jt}^T,
\quad 1\le i.j\le q.
$$
Hence, we have
\begin{equation}\label{bij.k}
\tilde{b}_k^{(ij)}=\sum_{s,t=1}^{q}{r}_k^{(is)}{b}_k^{(st)}{r}_k^{(jt)}
\end{equation}
and
\begin{equation}\label{bij.k-1}
\tilde{b}_{k-l}^{(ij)}=\sum_{s,t=1}^{q}\sum_{l_1+l_2+l_3=l,l_1,l_2,l_3\ge0}{r}_{k-l_1}^{(is)}{b}_{k-l_2}^{(st)}{r}_{k-l_3}^{(jt)},
\qquad 1\le l\le k-1.
\end{equation}

Define
$$
B_i=\begin{pmatrix}
    b_i^{(11)}& \ldots&b_i^{(1q)}\\
    \vdots& &\vdots\\ 
    b_i^{(q1)}& \ldots& b_i^{(qq)}
\end{pmatrix},
\widetilde{B}_i=
\begin{pmatrix}
    \tilde{b}_i^{(11)}&\ldots& \tilde{b}_i^{(1q)}\\ 
    \vdots& & \vdots\\
    \tilde{b}_i^{(q1)}& \ldots& \tilde{b}_i^{(qq)}
\end{pmatrix},
\quad 1\le i\le k,
$$
by (\ref{bij.k}) and (\ref{bij.k-1}),
we have
\begin{eqnarray}
R_kB_kR_k^T&=&\widetilde{B}_k,\label{eq.of.Bk}\\
R_kB_{k-1}R_k^T+R_{k-1}B_kR_k^T+R_kB_kR_{k-1}^T&=&\widetilde{B}_{k-1}.\label{eq.of.Bk-1}
\end{eqnarray}
and
\begin{equation}\label{eq.of.Bk-l}
R_{k-l}B_kR_k^T+R_kB_kR_{k-l}^T+\sum_{0\le l_1,l_3\le l-1,\;l_1+l_3\le l}R_{k-l_1}B_{k-(l-l_1-l_3)}R_{k-l_3}^T=\widetilde{B}_{k-l},\quad 2\le l\le k-1.
\end{equation}
Since $B_k$ is symmetric, by (\ref{eq.of.Bk}),
there exists an invertible matrix $R_k$
such that $\widetilde{B}_k$ is diagonal.
Note that $R_k$ is invertible implies that $R$ is also invertible.
Furthermore, $BC=A^2-I$ implies that $B$ is invertible.
Together with $|B|=|B_k|^k$, it follows that $B_k$ is also invertible. 

Therefore, 
by a real congruence transformation we may choose an invertible matrix $R_k$ such that $\widetilde B_k=R_kB_kR_k^T$ is diagonal and invertible. 
Moreover, by rescaling the diagonal entries if necessary, we may assume that the sum of any two diagonal entries of $\widetilde B_k$ is nonzero.

Plugging (\ref{eq.of.Bk}) into (\ref{eq.of.Bk-1}), we have
$$
(R_{k-1}R_k^{-1})\widetilde{B}_k+\widetilde{B}_k(R_{k-1}R_k^{-1})^T=\widetilde{B}_{k-1}-R_kB_{k-1}R_k^T.
$$
Since the sum of any two eigenvalues of $\widetilde{B}_k$ is nonzero,
the matrix $I\otimes\widetilde{B}_k+\widetilde{B}_k\otimes I$ is invertible.
Therefore, the matrix equation
$$
X\tilde{B}_k+\tilde{B}_kX^T=-R_kB_{k-1}R_k^T
$$
has a unique symmetric solution $X_{k-1}$. 
Taking $R_{k-1}=X_{k-1}R_k$, we obtain 
$$\widetilde B_{k-1}=0.$$
Again, since $\widetilde B_k$ is diagonal and the sum of any two of its diagonal entries is nonzero, the matrix equation
\[
X_{k-l}\widetilde B_k+\widetilde B_kX_{k-l}^T=-S_l,
\]
where $S_l=\sum_{0\le l_1,l_3\le l-1,\;l_1+l_3\le l}R_{k-l_1}B_{k-(l-l_1-l_3)}R_{k-l_3}^T,\;2\le l\le k-1$,
has a unique symmetric solution $X_{k-l}$. 
Taking $R_{k-l}=X_{k-l}R_k$, by \eqref{eq.of.Bk-l}, we obtain $\widetilde B_{k-l}=0$.

By induction, we can choose $R_{k-1},R_{k-2},\ldots,R_1$ such that
\[
\widetilde B_{k-1}=\widetilde B_{k-2}=\cdots=\widetilde B_1=0.
\]
Therefore, the only possibly nonzero coefficient matrix is $\widetilde B_k$, which is diagonal. Hence all off-diagonal blocks of $\widetilde{B}=RB R^T$ vanish, that is,
\[
\widetilde{B}=RBR^T=\operatorname{diag}(\tilde b_k^{(11)}K_k,\tilde b_k^{(22)}K_k,\ldots,\tilde b_k^{(qq)}K_k)
\]
in the block sense. 
Here $K_k$ is given by \eqref{K}.

At last, since $RAR^{-1}=A$ and $A^2-I$ is block diagonal, it follows from
\[
(R^T)^{-1}CR^{-1}=(RB R^T)^{-1}(A^2-I)
\]
that $(R^T)^{-1}CR^{-1}$ is also block diagonal. This proves the lemma.
\end{proof}

Based on the above argument, we have the following theorem.
\begin{theorem}\label{Thm:decomposition.of.eig.i}
    Suppose $M=\begin{pmatrix}A&B\\C&A^T\end{pmatrix}\in\Sp(2n)$ only possesses eigenvalues $\pm\sqrt{-1}$.
    Then there exists an invertible matrix $R\in{\rm GL}_n(\mathbb{R})$, such that under the action by $R$,
    we have
    \begin{equation}
    R_*\begin{pmatrix}A&B\\C&A^T\end{pmatrix}
    =\begin{pmatrix}A_1&{B}_{1}\\{C}_{1}&A_1^T\end{pmatrix}
    \diamond\ldots\diamond\begin{pmatrix}A_m&{B}_{m}\\{C}_{m}&A_m^T\end{pmatrix},
\end{equation}
where each $A_i,\;1\le i\le m$ only has one Jordan block.
For $A_i=J_{k_i}(0),k_i\ge1$,
    we must have
    \begin{equation}
    B_i=\left(\begin{matrix}
        b_1^{(i)} & b_2^{(i)} & \cdots & b_{k_i-1}^{(i)}& b_{k_i}^{(i)}\\
        b_2^{(i)} & b_3^{(i)} &\cdots & b_{k_i}^{(i)} & 0\\
        \vdots& \vdots& \vdots& \vdots& \vdots\\
        b_{k_i-1}^{(i)}& b_{k_i}^{(i)} & \cdots& 0& 0\\
        b_{k_i}^{(i)}& 0& \cdots& 0& 0
    \end{matrix}\right),\quad
    C_i=\left(\begin{matrix}
        0 & 0 & \cdots & 0& c_{k_i}^{(i)}\\
        0 & 0 &\cdots & c_{k_i}^{(i)} & c_{k_i-1}^{(i)}\\
        \vdots& \vdots& \vdots& \vdots& \vdots\\
        0& c_{k_i}^{(i)}& \cdots& c_3^{(i)}& c_2^{(i)}\\
        c_{k_i}^{(i)}& c_{k_i-1}^{(i)}& \cdots& c_2^{(i)}& c_1^{(i)}
    \end{matrix}\right),
    \end{equation}
    where $(b_1^{(i)},b_2^{(i)},\ldots,b_{k_i}^{(i)})$ and $(c_1^{(i)},c_2^{(i)},\ldots,c_{k_i}^{(i)})$
    satisfy
    \begin{eqnarray}
        b_1^{(i)}c_1^{(i)}&=&-1,\;{\rm if}\;k_i=1;
        \nonumber\\
        C_i\begin{pmatrix}b_1^{(i)}\\b_2^{(i)}\end{pmatrix}
        &=&\begin{pmatrix}-1\\0\end{pmatrix},
        \;{\rm if}\;k_i=2;
    \end{eqnarray}
    and
    \begin{equation}
        C_i\begin{pmatrix}b_1^{(i)}\\b_2^{(i)}\\ \vdots\\b_{k_i}^{(i)}\end{pmatrix}
        =\begin{pmatrix}-1\\0\\ \vdots\\0\\1\\0\\ \vdots\\0\end{pmatrix},
        \;{\rm if}\;k_i\ge3,
    \end{equation}
    where the element $1$ is located at the $(\lfloor\frac{k_i-1}{2}\rfloor+2)$-th row of the right hand side.
Moreover, for each $M_i=\begin{pmatrix}A_i&B_i\\C_i&A_i^T\end{pmatrix},\;1\le i\le m$,
there exists an $R_i\in{\rm GL}_{k_i}(\mathbb{R})$ of the form
    (\ref{R}), such that
    $$
    R_iA_iR_i^{-1}=A_i,\quad
    R_iB_iR_i^T=\pm K_{k_i},
    $$
where $K_{k_i}$ is the $k_i\times k_i$ matrix whose anti-diagonal entries all equal to $1$ and whose other entries are $0$, i.e.,

    $$
    K_{k_i}=\left(\begin{matrix}
        0 & 0 & \cdots & 0& 1\\
        0 & 0 &\cdots & 1 & 0\\
        \vdots& \vdots& \begin{sideways}$\ddots$\end{sideways}& \vdots& \vdots\\
        0& 1 & \cdots& 0& 0\\
        1& 0& \cdots& 0& 0
    \end{matrix}\right).
    $$
\end{theorem}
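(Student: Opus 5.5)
The plan is to assemble the theorem from the results of Subsection \ref{subsec:3.3}, in three stages.

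\emph{Stage 1 (separating block sizes).} By (\ref{char.poly}), $\sigma(M)=\{\pm\sqrt{-1}\}$ forces $\sigma(A)=\{0\}$. A preliminary $R_*$-action therefore brings $A$ to the Jordan form (\ref{A.Jordan.form}). Since $BC=A^2-I$ is invertible, both $B$ and $C$ are invertible. The determinant formula $\det B=\pm\prod_\alpha(\det\Gamma_\alpha)^{k_\alpha}$ then shows that every $\Gamma_\alpha$, and hence every diagonal group $B_\alpha$, is invertible. We apply Lemma \ref{Lm:decomposition.by.different.sizes} repeatedly, each time splitting off the last group of equal-size blocks (its $B_{22}$ is invertible). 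Each step is an $R_*$-action that keeps $A$ block diagonal. Composing these actions gives a $\diamond$-sum of pieces $A_\alpha=\mathrm{diag}(J_{k_\alpha}(0),\dots,J_{k_\alpha}(0))$.

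\emph{Stage 2 (separating equal-size blocks).} Inside each piece we invoke the preceding lemma on equal-size Jordan blocks to obtain a further $\diamond$-splitting into pieces with a single Jordan block $J_{k_i}(0)$. Composing all the $R$'s gives the claimed decomposition. The Hankel form of $B_i$ and $C_i$ is exactly Lemma \ref{Lm:one.Jordan.block.of.eig.0}, which follows from $A_iB_i=B_iA_i^T$ and $A_i^TC_i=C_iA_i$. The case $k_i=1$ is trivial: $A_i=0$ and $b_1c_1=-1$.

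\emph{Stage 3 (the linear constraint on $C_i$).} Write $C_iB_i=A_i^{T2}-I$, using the transpose relation $CB=(A^T)^2-I$. Reading off the first column of $C_iB_i$ gives $C_i(b_1,\dots,b_{k_i})^T$ equal to the first column of $(J_{k_i}(0)^T)^2-I$. That column is $-e_1+e_3$, matching the stated right-hand side. I would double-check the stated position $\lfloor (k_i-1)/2\rfloor+2$ against this computation. It agrees for small $k_i$, and if it disagrees in general I would flag it and prove the form coming from the actual column.

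\emph{Normalization to $\pm K_{k_i}$.} Use $R_i$ of the form (\ref{R}). As for $J_n(-1)$, such $R_i$ commute with $A_i$, and $R_iB_iR_i^T=R_i^2B_i$. The update of $(b_1,\dots,b_{k_i})$ is the upper-triangular system (\ref{eq.of.R}). Because $B_i$ is invertible, $b_{k_i}\neq0$, so the degenerate branch of Lemma \ref{Lm:simple.form.of.one.Jordan.block} never occurs here. We set $r_{k_i}=|b_{k_i}|^{-1/2}$ so that $\tilde b_{k_i}=\pm1$. We then solve successively for $r_{k_i-1},\dots,r_1$ to kill $\tilde b_{k_i-1},\dots,\tilde b_1$; each step is a linear equation whose leading coefficient is $2r_{k_i}b_{k_i}\neq0$. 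This gives $R_iB_iR_i^T=\pm K_{k_i}$.

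\emph{Main obstacle.} Stage 2 is the delicate part. It relies on the earlier lemma's step of diagonalizing $B_k$ with pairwise nonzero sums of diagonal entries. In the indefinite case, entries of opposite sign, such as $+1$ and $-1$, sum to zero, so the Lyapunov-type operator may be singular. One cannot simply rescale to fix this, since diagonal entries can only be rescaled by positive factors and so keep their signs; choosing magnitudes such as $+1$ and $-2$ does avoid zero sums. I would check this point and, if needed, handle the symmetric-part equation $X\widetilde B_k+\widetilde B_kX^T=S$ directly. With $X=\frac12 S\widetilde B_k^{-1}$ this is always solvable, and the solution need not be symmetric.
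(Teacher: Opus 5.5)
Your proposal is correct and follows the same route as the paper, which obtains this theorem by assembling Subsection~\ref{subsec:3.3}. That route runs through the Jordan form of $A$, invertibility of $B$ together with the determinant formula for the $\Gamma_\alpha$, successive use of Lemma~\ref{Lm:decomposition.by.different.sizes}, the equal-size lemma, the Hankel forms of Lemma~\ref{Lm:one.Jordan.block.of.eig.0}, and the triangular normalization of $R_i^2b$ as in Lemma~\ref{Lm:simple.form.of.one.Jordan.block}.

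Both of your flags are sound. Since the first column of $(A_i^T)^2-I$ is $-e_1+e_3$, the entry $1$ always lies in row $3$, as Lemma~\ref{Lm:one.Jordan.block.of.eig.0} says, so the stated row $\lfloor\frac{k_i-1}{2}\rfloor+2$ is wrong for every $k_i\ge5$. Your choice $X=\frac12 S\widetilde B_k^{-1}$ solves $X\widetilde B_k+\widetilde B_kX^T=S$ directly, because $R_{k-l}$ need not be symmetric. This is a simpler substitute for the paper's device of rescaling the diagonal of $\widetilde B_k$ to distinct magnitudes and then solving the resulting Sylvester equation among symmetric $X$.
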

\begin{remark}
    The decomposition in the above theorem fails when $M$ only possesses eigenvalue $-1$.
    The reason is that, the condition $|B|\ne0$ is required
    for Lemma \ref{Lm:decomposition.by.different.sizes}, which may fail for eigenvalue $-1$.
\end{remark}

When we consider the square of $M$ which only possesses eigenvalue $\pm\sqrt{-1}$, we have
\begin{lemma}\label{Lm:square.of.eig.i}
    Suppose $M=\begin{pmatrix}J_n(0)&sK_n\\C&J_n(0)^T\end{pmatrix}\in\Sp(2n)$ where $s=1$ or $-1$.
    If $n=2m$,
    then there exists an invertible matrix $R\in{\rm GL}_n(\mathbb{R})$, such that under the $R_*$-action,
    we have
    \begin{equation}
    R_*(M^2)=
    \begin{pmatrix}J_m(-1)&B_1\\-sK_m&J_m(-1)^T\end{pmatrix}
    \diamond\begin{pmatrix}J_m(-1)&sK_m\\C_2&J_m(-1)^T\end{pmatrix},
\end{equation}
which is symplectically similar to $\begin{pmatrix}J_m(-1)&sK_m\\C_2&J_m(-1)^T\end{pmatrix}^{\diamond2}$.

If $n=2l+1$,
then there exists an invertible matrix $R\in{\rm GL}_n(\mathbb{R})$, such that under the $R_*$-action,
    we have
    \begin{equation}
    R_*(M^2)=\begin{pmatrix}A_1&B_1\\C_1&A_1^T\end{pmatrix},
\end{equation}
where
$$
A_1={\rm diag}(J_{l+1}(-1),J_l(-1))
$$
and 
$$
B_1=\begin{pmatrix}
                        & & sK_l\\
                        & 0& &\\
                        sK_l& & &
                    \end{pmatrix}.
$$
\end{lemma}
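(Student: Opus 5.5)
We compute $M^2$ directly in block form and then look for an explicit $R_*$-action that splits it. Write $N=J_n(0)$ and $B=sK_n$. The Wonenburger relation $BC=A^2-I$ gives $C=sK_n(N^2-I)$, using $K_n^{-1}=K_n$. Then
\[
M^2=\begin{pmatrix}N^2+BC & NB+BN^T\\ CN+N^TC & N^{T2}+CB\end{pmatrix}
=\begin{pmatrix}2N^2-I & 2NB\\ 2CN & (2N^2-I)^T\end{pmatrix},
\]
where we used $NB=BN^T$ and $N^TC=CN$. So the new Wonenburger data are $A'=2N^2-I$, $B'=2sNK_n$ and $C'=2CN$.

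The key observation is that $N^2=J_n(0)^2$ splits into two Jordan chains along the even- and odd-indexed basis vectors.

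\emph{Step 1: permute $A'$ into block form.} Let $P$ be the permutation sending $e_1,e_3,e_5,\dots$ to the first block and $e_2,e_4,\dots$ to the second. Then $PN^2P^{-1}=\operatorname{diag}(J_{\lceil n/2\rceil}(0),J_{\lfloor n/2\rfloor}(0))$, and $A'$ becomes $\operatorname{diag}(2J_{a}(0)-I,\,2J_{b}(0)-I)$. Rescaling each chain by a diagonal matrix $D=\operatorname{diag}(2^{-j})$ along the chain turns $2J(0)-I$ into $J(-1)$. This is the same $Q_1$-type rescaling used in Subcase (3) of Lemma \ref{eig.-1.symplectic.sqrt.dim6}. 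After this, with $R_0=DP$, we have $R_0A'R_0^{-1}=\operatorname{diag}(J_a(-1),J_b(-1))$.

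\emph{Step 2: track $B'$.} The matrix $NK_n$ is the Hankel matrix with ones on the anti-diagonal $i+j=n$. Under the parity permutation, an entry $(i,j)$ with $i+j=n$ either pairs an odd index with an odd one and an even with an even (when $n$ is even), or pairs odd with even (when $n$ is odd).

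For $n=2m$, both chains have length $m$. Here $R_0B'R_0^T=\operatorname{diag}(\beta_1K_m,\beta_2K_m)$, while the entry $(n,n)$ of $K_n$-type lies in $C'$ instead. Concretely, $B'$ restricted to the odd chain is a multiple of $K_m$, and restricted to the even chain has rank $m-1$, since one index ($i=n$) is lost. Because everything is block diagonal, we get a $\diamond$-sum $M_a\diamond M_b$. In each block $A=J_m(-1)$, and $B$ is a scaled Hankel matrix. We then apply a further diagonal scaling, or an $R$ of the form (\ref{R}) as in Lemma \ref{Lm:simple.form.of.one.Jordan.block}, to normalize. In the first block we normalize $C$ to $-sK_m$ (the rank-$m$ one being $C$). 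In the second block we normalize $B$ to $sK_m$. The signs are fixed because congruence by real matrices preserves the sign of the leading anti-diagonal entry, and because $\tilde c=-2\tilde b$-type relations from Lemma \ref{Lm:one.Jordan.block} hold.

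\emph{Step 3: the $\diamond$-doubling for $n$ even.} We must show that $\begin{pmatrix}J_m(-1)&B_1\\-sK_m&J_m(-1)^T\end{pmatrix}$ is symplectically similar to $\begin{pmatrix}J_m(-1)&sK_m\\C_2&J_m(-1)^T\end{pmatrix}$. We plan to conjugate by the symplectic matrix $\begin{pmatrix}0&-X\\X^{-T}&0\end{pmatrix}$ with $X=K_m$, followed by an $R$ of form (\ref{R}) that swaps the roles of the $B$ and $C$ blocks. This generalizes the $2\times2$ fact used in Subcase (2) that $\begin{pmatrix}-1&0\\-2s&-1\end{pmatrix}\sim\begin{pmatrix}-1&2s\\0&-1\end{pmatrix}$. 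Equivalently, we can use Lemma \ref{Lm:simple.form.of.one.Jordan.block}, which shows that the normal form of such a block is determined by $A$ and by the sign of the leading nonzero entry of $B$ or $C$. Identifying the invariants of the two blocks (their Jordan type under $M+I$ and the sign $s$) then gives the similarity.

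\emph{Step 4: $n=2l+1$.} Here the odd chain has length $l+1$ and the even chain has length $l$. Since $i+j=n$ is odd, $B'$ pairs the two chains, and its permuted form is exactly the off-diagonal block pattern of $B_1$, namely $sK_l$ blocks with the middle row and column zero, up to the positive scalings introduced by $D$. We choose $D$ so that these scalars become $1$ while $A'\mapsto\operatorname{diag}(J_{l+1}(-1),J_l(-1))$ still holds. Since $D$ is composed of powers of $2$ along each chain, there are enough free parameters: an independent overall scalar per chain, plus the $2^{-j}$ grading. $C_1$ is then forced by $C_1=B_1^{-1}(A_1^2-I)$ wherever $B_1$ is invertible; in general it is simply the transformed $C'$.

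The main obstacle is the bookkeeping of scalars in Steps 2 and 4. The factor $2$ in $2N^2-I$ and in $2NB$ must be absorbed simultaneously into $J(-1)$ and into $\pm K$, and the freedom in $D$ is limited. If a pure diagonal choice does not suffice, we would fall back on the unipotent $R$ of form (\ref{R}) in each chain, as in Lemma \ref{Lm:simple.form.of.one.Jordan.block}, to kill the lower anti-diagonals. A secondary difficulty is the symplectic similarity in Step 3.
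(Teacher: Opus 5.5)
Your proposal is correct and follows essentially the same route as the paper: compute $M^2=\begin{pmatrix}2N^2-I&2sNK_n\\2CN&(2N^2-I)^T\end{pmatrix}$, split $N^2$ along the two parity chains with a power-of-two rescaling, and normalize each block using the sign invariance of the leading anti-diagonal coefficient. For $n=2m$ you then identify the two blocks by conjugating with the symplectic matrix $\begin{pmatrix}0&-K_m\\K_m&0\end{pmatrix}$, after which the lower-left block is forced by $BC=A^2-I$ because $sK_m$ is invertible (so no further $R$ is needed).

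Only small slips need fixing:
\begin{itemize}
\item With $R_0=DP$ the rescaling must be $\operatorname{diag}(2^{j})$ rather than $\operatorname{diag}(2^{-j})$.
\item On the even chain, $B'$ becomes a positive multiple of $sJ_m(0)K_m$, not of $K_m$.
\item With your choice of $P$, it is the second (even-chain) block whose $C$ is normalized to $-sK_m$.
\item You should justify that $C'$ is block diagonal, e.g.\ because $C'=2sK_n(N^2-I)N$ is supported on the anti-diagonals $i+j\in\{n+2,n+4\}$, or via $B'C'=A'^2-I$ as the paper does.
\end{itemize}
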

\begin{remark}
    For odd $n$, the matrix $M^2$ is not symplectically similar to any symplectic matrix that is decomposable as a $\diamond$-sum.
\end{remark}
\begin{proof}
    By direct computation and using (\ref{constraints.of.Wonenburger.form}), we have
$$
M^2=\begin{pmatrix}-I_n+2J_n(0)^2&2sJ_n(0){K}_n\\2{C}J_n(0)&[-I_n+2J_n(0)^2]^T\end{pmatrix}.
$$

Let
$$
Q_1={\rm diag}(2^{\frac{1}{2}},2^{\frac{2}{2}},\ldots,2^{\frac{n-1}{2}},2^{\frac{n}{2}}).
$$
Then we obtain
\begin{eqnarray}
    &&Q_1[-I_n+2J_n(0)^2]Q_1^{-1}
    \nonumber\\
    &&\quad=
    \begin{pmatrix}
        2^{\frac{1}{2}}& & & \\& 2^{\frac{2}{2}}\\
        & & \ddots &\\
        & & & 2^{\frac{n}{2}}
    \end{pmatrix}
    \begin{pmatrix}
        -1& 0& 2& & & \\
        & -1 & 0& 2& & \\
        & & \ddots& \ddots& \ddots& \\
        & & & -1& 0& 2\\
        & & & & -1& 0\\
        & & & & & -1
    \end{pmatrix}
    \begin{pmatrix}
        2^{-\frac{1}{2}}& & & \\& 2^{-\frac{2}{2}}\\
        & & \ddots &\\
        & & & 2^{-\frac{n}{2}}
    \end{pmatrix}
    \nonumber\\
    &&\quad=\begin{pmatrix}
        -1& 0& 1& & & \\
        & -1 & 0& 1& & \\
        & & \ddots& \ddots& \ddots& \\
        & & & -1& 0& 1\\
        & & & & -1& 0\\
        & & & & & -1
    \end{pmatrix}
    \nonumber\\
    &&\quad=-I_n+J_n(0)^2,
    \label{Q1.action}
\end{eqnarray}
and
\begin{eqnarray}
    &&Q_1[2sJ_n(0){K}_n]Q_1^T
    \nonumber\\
    &&\quad=
    \begin{pmatrix}
        2^{\frac{1}{2}}& & & \\& 2^{\frac{2}{2}}\\
        & & \ddots &\\
        & & & 2^{\frac{n}{2}}
    \end{pmatrix}
    \begin{pmatrix}
        & & 2s& 0& \medskip\\ 
        & \begin{sideways}$\ddots$\end{sideways}& \begin{sideways}$\ddots$\end{sideways}& &  \medskip\\

        2s& 0& & & \medskip\\
        0& & & & \medskip\\
        & & & & 
    \end{pmatrix}
    \begin{pmatrix}
        2^{\frac{1}{2}}& & & \\& 2^{\frac{2}{2}}\\
        & & \ddots &\\
        & & & 2^{\frac{n}{2}}
    \end{pmatrix}
    \nonumber\\
    &&\quad=2^{\frac{n}{2}+1}s\begin{pmatrix}
        & & 1& 0& \medskip\\ 
        & \begin{sideways}$\ddots$\end{sideways}& \begin{sideways}$\ddots$\end{sideways}& &  \medskip\\

        1& 0& & & \medskip\\
        0& & & & \medskip\\
        & & & & 
    \end{pmatrix}
    \nonumber\\
    &&\quad=2^{\frac{n}{2}+1}sJ_n(0)K_n.
\end{eqnarray}

Now we will construct matrices $R\in{\rm GL}_n(\mathbb{R})$
which satisfies $R[-I_n+2J_n(0)^2]R^{-1}={\rm diag}(J_{\lfloor\frac{n+1}{2}\rfloor}(-1),J_{\lfloor\frac{n-1}{2}\rfloor}(-1))$.
We will discuss the cases based on whether $n$ is even or odd.

\medskip
{\bf Case 1: $n=2m$ is even.}

Let
$$
Q_2=\{q_{ij}\}_{2m\times2m},
$$
which entries satisfy
$$
q_{i,2i}=q_{m+i,2i-1}=1,\quad\forall 1\le i\le m,
$$
and all other entries of $Q_2$ equal to $0$.
Then we have
$$
{Q_2}^{-1}={Q_2}^T.
$$
We denote the entries of $J_n(0)$ and $Q_2J_n(0)^2Q_2^{-1}$ by $a_{ij}$ and $\tilde{a}_{ij}$, respectively, for $1\le i,j\le n$.
Then we have
\begin{equation}\label{Q2.action}
\tilde{a}_{ij}=\sum_{r,s,t=1}^{2m}q_{ir}a_{rs}a_{st}q_{jt}.
\end{equation}
We fix an index $i$. 

If $1 \le i \le m-1$, then by the definition of $Q_2$, the only nonzero terms on the right-hand side of (\ref{Q2.action}) occur when $r=2i$.
Similarly, if we consider the possible nonzero terms on the right-hand side of (\ref{Q2.action}) , there must hold
$s=r+1=2i+1,t=s+1=2i+2$ and $j=t/2=i+1$.
Thus we have $\tilde{a}_{i,i+1}=q_{i,2i}a_{2i,2i+1}a_{2i+1,2i+2}q_{i+1,2i+2}=1$
which is the unique nonzero entry in the $i$-th row of $Q_2J_n(0)^2Q_2^{-1}$.

If $i=m$, then $q_{ir}\ne0$ implies $r=2i=2m$.
But $a_{ms}=0$ for all $1\le s\le 2m$. Thus all terms in the
sum of (\ref{Q2.action}) are zero.
Thus we have $\tilde{a}_{mj}=0$ for all $1\le j\le 2m$.

If $m+1 \le i \le 2m-1$, nonzero terms in (\ref{Q2.action})
require $r=2(i-m)-1,s=r+1=2(i-m),t=s+1=2(i-m)+1$ and $j=(t+1)/2+m=i+1$.
Thus $\tilde{a}_{ij}=1$ is the unique nonzero entry in the $i$-th row of $Q_2J_n(0)^2Q_2^{-1}$.

If $i=2m$, nonzero terms in (\ref{Q2.action})
require $r=2m-1,s=2m,t=2m+1$ which is impossible.
Thus we have $\tilde{a}_{2m,j}=0$ for all $1\le j\le 2m$.

Based on the above argument, we obtain
\begin{eqnarray}
Q_2[Q_1(-I_{2m}+2J_{2m}(0)^2)Q_1^{-1}]Q_2^{-1}
&=&Q_2[-I_{2m}+J_{2m}(0)^2]Q_2^T
\nonumber\\
&=&-I_{2m}+Q_2J_{2m}(0)^2Q_2^{-1}
\nonumber\\
&=&-I_{2m}+{\rm diag}(J_m(0),J_m(0))
\nonumber\\
&=&{\rm diag}(J_m(-1),J_m(-1)).
\end{eqnarray}
Similarly, we have
\begin{eqnarray}
Q_2[Q_1(2sJ_{2m}(0)K_{2m})Q_1^{T}]Q_2^T
&=&Q_2[2^{\frac{n}{2}+1}sJ_{2m}(0)K_{2m}]Q_2^T
\nonumber\\
&=&2^{\frac{n}{2}+1}{\rm diag}(sK_{m-1},0,sK_m).
\end{eqnarray}

Suppose $\bar{B}={\rm diag}(K_{m-1},0)$
and
\begin{equation}\label{relation.C.C22}
(Q_2^{-1})^T(Q_1^{-1})^T[CJ_{2m}(0)]Q_1^{-1}Q_2^{-1}
=\begin{pmatrix}
    C_{11}& C_{12}\\ C_{12}^T& C_{22}
\end{pmatrix}.
\end{equation}
Then we have
$$2^{m+1}\begin{pmatrix}
    \bar{B}& O\\ O& K_{m}
\end{pmatrix}\begin{pmatrix}
    C_{11}& C_{12}\\ C_{12}^T& C_{22}
\end{pmatrix}=
2^{m+1}\begin{pmatrix}
    -I_m+J_m(-1)^2& O\\ O& -I_{m}+J_m(-1)^2
\end{pmatrix},
$$
which implies
$$
K_mC_{12}^T=0.
$$
Since $|K_m|=(-1)^{\frac{m(m-1)}{2}}\ne0$,
there must hold $C_{12}=0$.
Furthermore, by letting
$$
Q_3=2^{-\frac{m+1}{2}}I_{2m},
$$
we obtain
$$
{Q_3}_*\begin{pmatrix}J_m(-1)& O& 2^{m+1}s\bar{B}& O\\
                    O&J_m(-1)& O& 2^{m+1}sK_m\\
                    C_{11}& O&J_m(-1)^T& O\\
                    O& C_{22}&O&J_m(-1)^T
        \end{pmatrix}
=\begin{pmatrix}J_m(-1)& O& s\bar{B}& O\\
                    O&J_m(-1)& O& sK_m\\
                    2^{m+1}{C}_{11}& O&J_m(-1)^T& O\\
                    O& 2^{m+1}{C}_{22}&O&J_m(-1)^T
        \end{pmatrix}
$$
Therefore, letting
$$
R_1=Q_3Q_2Q_1,
$$
we have
$$
{R_1}_*(M^2)=\begin{pmatrix}J_m(-1)&\bar{B}\\2^{m+1}C_{11}&J_m(-1)^T\end{pmatrix}
    \diamond
    \begin{pmatrix}J_m(-1)&sK_m\\2^{m+1}C_{22}&J_m(-1)^T\end{pmatrix}
$$

On the other hand, by \eqref{relation.C.C22},
we have $rank(C_{11})+rank(C_{22})=rank(CJ_{2m}(0))=rank(J_{2m}(0))=2m-1$;
and by Lemma \ref{Lm:simple.form.of.one.Jordan.block}, $rank(C_{22})=rank(2^{m+1}C_{22})=m-1$.
Thus $rank(C_{11})=m$.
Therefore, by Lemma \ref{Lm:simple.form.of.one.Jordan.block},
there exists an $R_2\in{\rm GL}_m(\mathbb{R})$ such that
$$
{R_2}_*\begin{pmatrix}J_m(-1)&\bar{B}\\2^{m+1}C_{11}&J_m(-1)^T\end{pmatrix}
=\begin{pmatrix}J_m(-1)&B_1\\-sK_m&J_m(-1)^T\end{pmatrix}
$$
for some symmetric matrix $B_1$.
Therefore, $R={\rm diag}(R_2,I_m)R_1$ is the required action matrix.
Note that
\begin{eqnarray}
\begin{pmatrix}O&-K_m\\K_m&O\end{pmatrix}
\begin{pmatrix}J_m(-1)&B_1\\-sK_m&J_m(-1)^T\end{pmatrix}
\begin{pmatrix}O&-K_m\\K_m&O\end{pmatrix}^{-1}
&=&\begin{pmatrix}K_mJ_m(-1)^TK_m&-K_m(-sK_m)K_m\\-K_mB_1K_m&K_mJ_m(-1)K_m\end{pmatrix}
\nonumber\\
&=&\begin{pmatrix}J_m(-1)&sK_m\\-K_mB_1K_m&J_m(-1)^T\end{pmatrix}.
\end{eqnarray}
Comparing the two Wonenburger matrices $\begin{pmatrix}J_m(-1)&sK_m\\2^{m+1}C_{22}&J_m(-1)^T\end{pmatrix}$ and $\begin{pmatrix}J_m(-1)&sK_m\\-K_mB_1K_m&J_m(-1)^T\end{pmatrix}$,
the equation in \eqref{constraints.of.Wonenburger.form} give
$$
sK_m\cdot2^{m+1}C_{22}=J_m(-1)^2-I=sK_m\cdot(-K_mB_1K_m).
$$
Since $sK_m$ is invertible, there must hold $2^{m+1}C_{22}=-K_mB_1K_m$.
Therefore, we have
$$
M^2\sim\begin{pmatrix}J_m(-1)&sK_m\\C_{2}&J_m(-1)^T\end{pmatrix}^{\diamond2},
$$
where $C_2=2^{m+1}C_{22}$.

\medskip
{\bf Case 2: $n=2l+1$ is odd.}

Let
$$
Q_2=\{q_{ij}\}_{(2l+1)\times(2l+1)},
$$
which entries satisfy
$$
q_{11}=1,
$$
and
$$
q_{i+1,2i+1}=q_{l+i+1,2i}=1,\quad\forall 1\le i\le l,
$$
and all other entries of $Q_2$ equal to $0$.
Then we have
$$
{Q_2}^{-1}={Q_2}^T.
$$
By a similar argument as in Case 1, we have
\begin{eqnarray}
    Q_2[Q_1(-I_{2l+1}+2J_{2l+1}(0)^2){Q_1}^{-1}]{Q_2}^{-1}&=&Q_2[-I_{2l+1}+J_{2l+1}(0)^2]Q_2^T
\nonumber\\
&=&{\rm diag}(J_{l+1}(-1),J_{l}(-1)),
\nonumber\\
Q_2[Q_1(2sJ_{2l+1}(0)K_{2l+1})Q_1^{T}]Q_2^T
&=&Q_2[2^{\frac{n}{2}+1}sJ_{2l+1}(0)K_{2l+1}]Q_2^T
\nonumber\\
&=&2^{l+\frac{3}{2}}\begin{pmatrix}
                        & & sK_l\\
                        & 0& &\\
                        sK_l& & &
                    \end{pmatrix}.
\nonumber
\end{eqnarray}
Furthermore, we define
$$
Q_3=2^{-\frac{2l+3}{4}}I_{2l+1},
$$
and let
$$
R=Q_3Q_2Q_1
$$
Then we obtain
\begin{eqnarray}
    R[-I_{2l+1}+2J_{2l+1}(0)^2]R^{-1}&=&{\rm diag}(J_{l+1}(-1),J_{l}(-1)),
    \nonumber\\
    R[2sJ_{2l+1}(0)K_{2l+1}]R^T&=&\begin{pmatrix}
                        & & sK_l\\
                        & 0& &\\
                        sK_l& & &
                    \end{pmatrix}.
\end{eqnarray}
\end{proof}

\section{Square root of symplectic matrices in $\Sp(2n)$}
\label{sec:4}

Suppose $X=\begin{pmatrix}X_{11}& X_{12}\cr X_{21}& X_{11}^T\end{pmatrix}$, as a Wonenburger matrix,
    is a real symplectic square root of $M=\begin{pmatrix}A&B\\C&A^T\end{pmatrix}$.
    Then we have
    \begin{eqnarray}
    \left(\begin{matrix}
        A & B\\
        C & A^T
    \end{matrix}\right)
    &=&\left(\begin{matrix}X_{11}& X_{12}\cr X_{21}& X_{11}^T\end{matrix}\right)^2
    \nonumber\\
    &=&\left(\begin{matrix}
    X_{11}^2+X_{12}X_{21} & X_{11}X_{12}+X_{12}X_{11}^T \\ X_{21}X_{11}+X_{11}^TX_{21} & (X_{11}^T)^2+X_{21}X_{12}
\end{matrix}\right)
    \nonumber\\
    &=&2\left(\begin{matrix}
    X_{11}^2 & X_{11}X_{12} \\ X_{21}X_{11} & (X_{11}^T)^2
\end{matrix}\right)-I_{2n}.,   
\end{eqnarray}
where we have used (\ref{constraints.of.Wonenburger.form}) in the last equality.
Consequently, the following must hold:
\begin{eqnarray}
    X_{11}^2&=&\frac{1}{2}(I_n+A),\label{eq.X_11}\\
    X_{11}X_{12}&=&\frac{1}{2}B,\label{eq.X_12}\\
    X_{21}X_{11}&=&\frac{1}{2}C.\label{eq.X_21}
\end{eqnarray}

Following an argument similar to that in Section 4 of \cite{FrM1},
the characteristic polynomial of $M$ is given by
$$
p_{M}(t)=\det(t^2I_n-2t(2X_{11}^2-I_n)+I_n)
=(4t)^n\det\left({\frac{(t+1)^2}{4t}}I_n-X_{11}^2\right).
$$
Then for any eigenvalue $\mu\in\sigma(X_{11})$,
$M$ has eigenvalues $t$ satisfying
\begin{equation}\label{Eq.of.t}
\frac{(t+1)^2}{4t}=\mu^2.
\end{equation}
If $t$ lies on the unit circle $\mathbb{U}\subset\mathbb{C}$, then $\mu^2=\frac{1}{4}(t+\bar{t}+2)\in(0,1)$.
If $t>1$ (or $0<t<1$ resp.), then $\mu^2=\frac{1}{4}(t+\frac{1}{t}+2)>1$.
If $t<-1$ (or $-1<t<0$ resp.), then $\mu^2=\frac{1}{4}(t+\frac{1}{t}+2)<-1$.
If $t\in\mathbb{C}\backslash(\mathbb{R}\cup\mathbb{U})$,
then $\mu^2=\frac{1}{4}(t+\frac{1}{t}+2)\in\mathbb{C}\backslash(\mathbb{R}\cup\mathbb{U})$.

The preceding relations show that the square-root problem is governed by the spectral behavior of the block $X_{11}$, and hence by the corresponding spectral position of $M$. In view of the decomposition results established in Section \ref{sec:3}, it is enough to treat the square-root problem on each spectral component separately. We shall divide the discussion into three cases. Subsection \ref{subsec:4.1} deals with the case in which $M$ has no eigenvalues on the negative real axis; in this situation the square root can be constructed directly. Subsection \ref{subsec:4.2} treats negative hyperbolic matrices, where the obstruction is essentially a parity and doubling phenomenon. Finally, Subsection \ref{subsec:4.3} is devoted to the degenerate case in which $-1$ is the only eigenvalue, whose analysis requires the refined normal forms obtained in the previous section.

\subsection{The case with no eigenvalues on the negative real axis}
\label{subsec:4.1}

When the matrix only possess positive eigenvalues, we have the following lemma:
\begin{lemma}\label{Lm:real.sqrt}
    Suppose $A\in\mathbb{R}^{n\times n}$ possesses only
    unique eigenvalues $\lambda$ with multiplicity $n$.
    If $\lambda>0$, then $A$ has at least one real square root.
    Furthermore, the real square root can be written as a polynomial function of $A$.
\end{lemma}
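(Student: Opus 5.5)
Write $A=\lambda I_n+N$ with $N=A-\lambda I_n$ nilpotent. Since $A$ has the single eigenvalue $\lambda$ with multiplicity $n$, the Cayley--Hamilton theorem gives $(A-\lambda I_n)^n=0$, so $N^n=0$. The plan is to take the binomial series for $\sqrt{1+x}$, which terminates when evaluated at a nilpotent matrix, and use it to define the square root explicitly.

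Set $\mu=\sqrt{\lambda}>0$, which is real because $\lambda>0$. Define
\[
X=\mu\sum_{k=0}^{n-1}\binom{1/2}{k}\Big(\frac{N}{\lambda}\Big)^k ,
\]
where $\binom{1/2}{k}$ are the real generalized binomial coefficients. $X$ is real because $\mu$, $\lambda$ and $N$ are real. It is also a polynomial in $N=A-\lambda I_n$, hence a polynomial in $A$, which gives the second assertion of the lemma.

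To check $X^2=A$, use the formal power series identity $\big(\sum_{k\ge0}\binom{1/2}{k}x^k\big)^2=1+x$ in $\mathbb{R}[[x]]$. Truncate it modulo $x^n$: the polynomial $p(x)=\sum_{k=0}^{n-1}\binom{1/2}{k}x^k$ then satisfies $p(x)^2\equiv 1+x \pmod{x^n}$. Substituting $x=N/\lambda$ and using $N^n=0$ gives $p(N/\lambda)^2=I_n+N/\lambda$. Therefore
\[
X^2=\mu^2\Big(I_n+\frac{N}{\lambda}\Big)=\lambda I_n+N=A .
\]

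An alternative route is Lagrange--Hermite interpolation: choose a real polynomial $q$ with $q(\lambda)=\sqrt\lambda$ that matches the derivatives of $\sqrt{t}$ at $\lambda$ up to order $n-1$. By the standard primary matrix function calculus, $q(A)^2=A$. I would present the binomial-series argument because it is self-contained.

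The only step needing any care is justifying the truncated formal identity. I would note that the coefficients $c_j$ of $p(x)^2$ for $j<n$ coincide with those of the full series squared, and these vanish for $j\ge2$. This follows by comparing Taylor coefficients of $(1+x)^{1/2}\cdot(1+x)^{1/2}=1+x$ at $x=0$.

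Beyond that, the argument is routine. The hypothesis $\lambda>0$ is used exactly once: to make $\sqrt\lambda$ real, so that $X$ is a real matrix.
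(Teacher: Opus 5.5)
Your proposal is correct and takes essentially the same approach as the paper: write $A=\lambda I_n+(A-\lambda I_n)$ with nilpotent part, and take the terminating binomial series $X=\sqrt{\lambda}\sum_{i}\binom{1/2}{i}\bigl(\frac{A-\lambda I_n}{\lambda}\bigr)^i$, which is a real polynomial in $A$. Your truncation argument modulo $x^n$ supplies the verification of $X^2=A$ that the paper only asserts, and it makes the paper's preliminary reduction to Jordan form unnecessary.
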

\begin{proof}
    Without loss of generality, we suppose
    $$
    A={\rm diag}(J_{n_1}(\lambda),\ldots,J_{n_k}(\lambda)),
    $$
    where $J_{n_i}(\lambda),i=1,\ldots,k$ are Jordan blocks with
    eigenvalues $\lambda$.
    Let $A_1=\lambda I_n$, then $A_2=A-A_1$ is a nilpotent matrix.

    We define
    $$
    X=\sqrt{\lambda}\sum_{i=0}^{+\infty}\left(\begin{matrix}\frac{1}{2}\\i\end{matrix}\right)(\frac{A_2}{{\lambda}})^i
    =\sqrt{\lambda}\sum_{i=0}^{n}\left(\begin{matrix}\frac{1}{2}\\i\end{matrix}\right)(\frac{A-\lambda I_n}{{\lambda}})^i,
    $$
    where in the second equality, we have used the nilpotency of $A_2=A-\lambda I_n$.
    Then $X$ is a polynomial function of $A$, and meanwhile a real square root of $A$.
\end{proof}

Now we can give 

\begin{proof}[The proof of Theorem A]
By the characteristic-polynomial identity
\[
p_M(t)=\det(t^2I_n-2tA+I_n),
\]
a number $t$ is an eigenvalue of $M$ only if
$\mu=\frac12\left(t+t^{-1}\right)$
is an eigenvalue of $A$. 
Conversely, if $\mu\in\sigma(A)$, then the corresponding eigenvalues of $M$ are the roots of $t^2-2\mu t+1=0$.
It follows that $M$ has no eigenvalues on the negative real axis if and only if
\[
\sigma(A)\cap(-\infty,-1]=\emptyset.
\]
Indeed, a negative real eigenvalue $t<0$ of $M$ gives
$\frac12(t+t^{-1})\leq -1$,
whereas each $\mu\leq -1$ gives negative real roots of $t^2-2\mu t+1=0$.

Set
\[
S=\frac12(I_n+A).
\]
Then
\[
\sigma(S)\cap(-\infty,0]=\emptyset .
\]
Hence the principal square root $S^{1/2}$ is well defined and real. 
Moreover, as a primary matrix function, $S^{1/2}$ can be represented by a real polynomial in $S$, and therefore also by a real polynomial in $A$; see Higham (\cite{Hig2}, Chapters 1 and 6). 
Put
\[
X_{11}=S^{1/2}=f(A),\quad 
X_{12}=\frac{1}{2}X_{11}^{-1}B,\quad 
X_{21}=\frac{1}{2}CX_{11}^{-1},
\]
where $f(A)$ is some polynomial function of $A$.
Then $X_{11}$ commutes with $A$, and from the Wonenburger relations
$AB=BA^T,A^TC=CA$,
we also have
\[
X_{11}B=BX_{11}^T,\qquad X_{11}^TC=CX_{11}.
\]
We need to check the symmetry of $X_{12}$. Indeed, we have
$$
X_{12}^T=\frac{1}{2}B^T(f(A)^{-1})^T=\frac{1}{2}f(A)^{-1}B=X_{12},
$$
where we have used $B=B^T$ and $AB=BA^T$ in the second equality.
By the similar arguments, we have
$$
X_{21}^T=X_{21},\quad
X_{11}X_{12}=X_{12}X_{11}^T,\quad
X_{11}^TX_{21}=X_{21}X_{11}.
$$
Moreover, we have
\begin{eqnarray}
X_{11}^2-X_{12}X_{21}
&=&\frac{1}{2}(I+A)-\frac{1}{2}X_{11}^{-1}B\cdot\frac{1}{2}CX_{11}^{-1}
\nonumber\\
&=&\frac{1}{2}(I+A)-\frac{1}{4}X_{11}^{-1}(A^2-I)X_{11}^{-1}
\nonumber\\
&=&\frac{1}{2}(I+A)-\frac{1}{4}(X_{11}^{-1})^2(A^2-I)
\nonumber\\
&=&\frac{1}{2}(I+A)-\frac{1}{2}(A+I)^{-1}(A^2-I)
\nonumber\\
&=&\frac{1}{2}(I+A)-\frac{1}{2}(A-I) \nonumber\\
&=&I,
\end{eqnarray}
where in the third equality, we used the commutativity of $A$ and $X_{11}=f(A)$.
Therefore, $X=
\begin{pmatrix}
X_{11} & X_{12}\\
X_{21} & X_{11}^T
\end{pmatrix}$ exactly satisfies (\ref{constraints.of.Wonenburger.form}),
which is a real symplectic matrix.

At last, we have
\[
X^2=
\begin{pmatrix}
X_{11} & X_{12}\\
X_{21} & X_{11}^T
\end{pmatrix}^2
    =2\left(\begin{matrix}
    X_{11}^2 & X_{11}X_{12} \\ X_{21}X_{11} & (X_{11}^T)^2
\end{matrix}\right)-I_{2n}
=
\begin{pmatrix}
A & B\\
C & A^T
\end{pmatrix}
=M.
\]
Thus $M$ admits a real symplectic square root.
\end{proof}

\subsection{The Negative Hyperbolic Case}
\label{subsec:4.2}

We turn to the second case, where the symplectic matrix possesses negative real eigenvalues.
In this case, not every matrix admits a real symplectic square root.

Now we give 

\begin{proof}[The proof of Theorem B]
We first prove the sufficient condition.
Suppose $n=2n_1$, and there exists $A_1\in\mathbb{R}^{n_1\times n_1}$ such that
$$
A\sim {\rm diag}(A_1,A_1).
$$
Since $M$ only possesses eigenvalues $\lambda,\frac{1}{\lambda}$, 
$A$, and then $A_1$ only have eigenvalue $\frac{1}{2}(\lambda+\frac{1}{\lambda})$.
Then $-\frac{I+A_1}{2}$ possesses positive eigenvalue
$\frac{(\lambda-1)^2}{-2\lambda}>0$.
By Lemma \ref{Lm:real.sqrt},
suppose $Y$ be a real square root of
$-\frac{I+A_1}{2}$.
Define
$$
X_{11}=\begin{pmatrix}
    O & -Y\\ Y & O
\end{pmatrix}.
$$
Then we have
$$
X_{11}^2=\begin{pmatrix}
    -Y^2 & O\\ O & -Y^2
\end{pmatrix}
=\begin{pmatrix}
    \frac{I+A_1}{2} & O\\ O & \frac{I+A_1}{2}
\end{pmatrix}
=\frac{I+A}{2}.
$$
The remaining argument is similar as Case (1) in the proof of
Theorem A.

Conversely, suppose $\lambda=-\mu^2$ for some $\mu>0$, 
and $X=\begin{pmatrix}X_{11}&X_{12}\\X_{21}&X_{22}^T\end{pmatrix}$ be a real symplectic square root of $M$.
Then we have $\sigma(X)=\{\pm\mu\sqrt{-1},\pm\frac{1}{\mu}\sqrt{-1}\}$,
each with the same multiplicity.
Furthermore, we have
$\sigma(X_{11})=\{\pm\frac{1}{2}(\mu-\frac{1}{\mu})\sqrt{-1}\}$.
Thus, $n$ is even, and so we may assume $n=2n_1$.

There exists $P\in {\rm GL}_n(\mathbb{R})$ and $m\in\mathbb{N}$
such that
$$
P^{-1}X_{11}P={\rm diag}(J_{2k_1},J_{2k_2},\ldots,J_{2k_m}),
$$
where
\begin{eqnarray}
J_{2k_i}:=J_{2k_i}(\hat\mu,\frac{\pi}{2})=
\begin{pmatrix}
    \hat\mu R(\frac{\pi}{2})& I_2& 0& \ldots& 0& 0\cr
    0& \hat\mu R(\frac{\pi}{2})& I_2& \ldots& 0& 0\cr
    \cdot& \cdot& \cdot& \ldots& \cdot& \cdot\cr
    \cdot& \cdot& \cdot& \ldots& \cdot& \cdot\cr
    0& 0& 0& \ldots& \hat\mu R(\frac{\pi}{2})& I_2\cr
    0& 0& 0& \ldots& 0& \hat\mu R(\frac{\pi}{2})
\end{pmatrix},
\end{eqnarray}
where $\hat\mu=\frac{1}{2}|\mu-\frac{1}{\mu}|$.
Then we have
$$
P^{-1}X_{11}^2P={\rm diag}(J_{2k_1}^2,J_{2k_2}^2,\ldots,J_{2k_m}^2).
$$
Furthermore, for $1\le i\le m$,
we obtain
\begin{eqnarray}
J_{2k_i}^2&=&
\begin{pmatrix}
    \hat\mu R(\frac{\pi}{2})& I_2& 0& \ldots& 0& 0\cr
    0& \hat\mu R(\frac{\pi}{2})& I_2& \ldots& 0& 0\cr
    \cdot& \cdot& \cdot& \ldots& \cdot& \cdot\cr
    \cdot& \cdot& \cdot& \ldots& \cdot& \cdot\cr
    0& 0& 0& \ldots& \hat\mu R(\frac{\pi}{2})& I_2\cr
    0& 0& 0& \ldots& 0& \hat\mu R(\frac{\pi}{2})
\end{pmatrix}^2
\nonumber\\
&=&
\begin{pmatrix}
    -\hat\mu^2 I_2& 2\hat\mu J_2& I_2& 0& \ldots& 0& 0\cr
    0& \hat\mu^2 I_2& 2\hat\mu J_2& I_2& \ldots& 0& 0\cr
    \cdot& \cdot& \cdot& \cdot& \ldots& \cdot& \cdot\cr
    \cdot& \cdot& \cdot& \cdot& \ldots& \cdot& \cdot\cr
    0& 0& 0& 0& \ldots& 2\hat\mu J_2& I_2\cr
    0& 0& 0& 0&\ldots& -\hat\mu^2 I_2& 2\hat\mu J_2\cr
    0& 0& 0& 0&\ldots& 0& -\hat\mu^2 I_2
\end{pmatrix}.    \nonumber
\end{eqnarray}
Note that $\dim\ker(J_{2k_i}(\hat\mu,\frac{\pi}{2})^2+\hat\mu^2 I_{2k_i})=2$.
Moreover, we have
$\Big(J_{2k_i}(\hat\mu,\frac{\pi}{2})^2+\hat\mu^2 I_{2k_i}\Big)^{k-1}\ne0$ and
$\Big(J_{2k_i}(\hat\mu,\frac{\pi}{2})^2+\hat\mu^2I_{2k_i}\Big)^{k}=0$.
Then the Jordan form of $J_{2k_i}(\hat\mu,\frac{\pi}{2})$ has only two blocks,
each with size $k_i\times k_i$.
Hence, there exists $Q_i\in GL(2k_i)$ such that
$$
Q_i^{-1}J_{2k_i}(\hat\mu,\frac{\pi}{2})Q_i= diag(J_{k_i}(-\hat\mu^2),J_{k_i}(-\hat\mu^2)).
$$
Then $A_1=2{\rm diag}(J_{k_1}(-\hat\mu^2),J_{k_2}(-\hat\mu^2),\ldots,J_{k_m}(-\hat\mu^2))-I_n$
is the required matrix.
\end{proof}

\subsection{The Degenerate Case: Eigenvalue $-1$}
\label{subsec:4.3}

Finally, we address the third and most intricate case, where the symplectic matrix has $-1$ as its only eigenvalue. This degenerate scenario is characterized by the presence of nontrivial nilpotent structures in the Jordan normal form of its top-left block $A$. The analysis consequently requires a more refined decomposition theory to account for this added complexity.


Having established the necessary decompositions in Section 3, we now give

\begin{proof}[The Proof of Theorem C]
Suppose $X$ is a real symplectic square root
    of $M$.
    Then we have $\sigma(X)=\{\pm\sqrt{-1}\}$,
    each with multiplicity $n$.
    By Theorem \ref{Thm:decomposition.of.eig.i},
    there exists $Q\in\Sp(2n)$ such that
\begin{equation}
    QXQ^{-1}
    =(M_1
    \diamond\ldots\diamond M_p)\diamond(N_1\diamond\ldots\diamond N_q),
\end{equation}
where 
$$
M_i=\begin{pmatrix}J_{2m_i}(0)&s_i{K}_{2m_i}\\**&J_{2m_i}(0)^T\end{pmatrix},
$$
for some $m_i\in\mathbb{N},s_i\in\{\pm1\},\;1\le i\le p$,
and
$$
N_j=\begin{pmatrix}J_{2l_j+1}(0)&t_j{K}_{2l_j+1}\\**&J_{2l_j+1}(0)^T\end{pmatrix},
$$
for some $l_j\in\mathbb{N}\cup\{0\},t_j\in\{\pm1\},\;1\le j\le q$.

On the other hand, we have
$$
QMQ^{-1}=(QXQ^{-1})^2=(M_1^2\diamond\ldots\diamond M_p^2)\diamond(N_1^2\diamond\ldots\diamond N_q^2).
$$
By Lemma \ref{Lm:square.of.eig.i}, for each $i\in\{1,2\ldots,p\}$ and
$j\in\{1,2\ldots,q\}$,
we have
$$
M_i^2\sim\begin{pmatrix}J_{m_i}(-1)&s_i{K}_{m_i}\\D_i&J_{m_i}(-1)^T\end{pmatrix}^{\diamond2},
$$
for some $D_i\in{\mathbb{R}}^{m_i\times m_i}$,
and
$$
N_j^2\sim\begin{pmatrix}A_j&B_j\\C_j&A_j^T\end{pmatrix}
$$
where $A_j$ and $B_j$ are given by (\ref{Aj.Bj}).

Conversely, if we have the decomposition (\ref{existence.condition.of.eig.-1}),
then by reversing
the proof of Lemma \ref{Lm:square.of.eig.i},
we can construct a real symplectic square root of $M$.
\end{proof}

\section*{Acknowledgements}

The author would like to thank Prof. Urs Frauenfelder for many helpful suggestions and remarks.

\end{document}